\newcommand{\imod}[1]{\allowbreak\mkern4mu({\operator@font mod}\,\,#1)}
\numberwithin{equation}{section}
\def\a{\alpha}
 \def\b{\beta}
 \def\e{\epsilon}
 \def\d{\delta}
  \def\D{\Delta}
 \def\c{\chi}
 \def\g{\gamma}
 \def\N{\mathbb N}
 \def\Z{\mathbb Z}
 \def\C{\mathbb C}
 \def\F{\mathbb F}
 \def\Cl{{\rm Cl}}
 \def\l{\lambda}
\def\D{\Delta}
 \def\s{\sigma}
 \def\O{\Omega}
 \def\bmax{\mathsf{b}}
 \def\St{\mathsf{St}}
 \def\SSS{\mathsf{S}}
 \def\AAA{\mathsf{A}}
 \def\Irr{\mathrm{Irr}}
 \def\Ind{\mathrm{Ind}}
 \def\Ker{\mathrm{Ker}}
 \def\supp{\mathsf{supp}}
 \def\Stab{\mathrm{Stab}}
 \def\Z{\mathbb{Z}}
 \def\GSS{G_{\mathrm {ss}}}
 \def\CB{\mathbf{C}}
 \def\ZB{\mathbf{Z}}
 \def\NB{\mathbf{N}}
 \def\SL{\mathrm {SL}}
 \def\GL{\mathrm {GL}}
 \def\PSL{\mathrm {PSL}}
 \def\GU{\mathrm {GU}}
 \def\Sp{\mathrm {Sp}}
 \def\SO{\mathrm {SO}}
  \def\GO{\mathrm {O}}
 \def\MC{\mathcal {M}}
 \def\XC{\mathcal {X}}
 \def\QF{\mathsf{Q}}
 \newcommand{\tw}[1]{{}^#1\!}
\renewcommand{\mod}{\bmod \,}
\newtheorem{theorem}{Theorem}
\newtheorem*{conj*}{Conjecture}
\newtheorem{conj}[theorem]{Conjecture}
\newtheorem{thm}{Theorem}[section]
\newtheorem{prop}[thm]{Proposition}
\newtheorem{lem}[thm]{Lemma}
\theoremstyle{definition}
\begin{document}

 \author{Martin W. Liebeck}
\address{M.W. Liebeck, Department of Mathematics,
    Imperial College, London SW7 2BZ, UK}
\email{m.liebeck@imperial.ac.uk}

\author{Aner Shalev}
\address{A. Shalev, Institute of Mathematics, Hebrew University, Jerusalem 91904, Israel}
\email{shalev@math.huji.ac.il}

\author[P. H. Tiep]{Pham Huu Tiep}
\address{P.H. Tiep, Department of Mathematics, Rutgers University, Piscataway, NJ 08854, USA}
\email{tiep@math.rutgers.edu}

\title{McKay graphs for alternating and classical groups}

\maketitle

\begin{abstract}
Let $G$ be a finite group, and $\a$ a nontrivial  character of $G$. The McKay graph $\MC(G,\a)$ has the irreducible characters of $G$ as vertices, with an edge from $\c_1$ to $\c_2$ if $\c_2$ is a constituent of $\a\c_1$. We study the diameters of McKay graphs for finite simple groups $G$. For alternating groups, we prove a conjecture made in \cite{LST}: there is an absolute constant $C$ such that $\hbox{diam}\,{\mathcal M}(G,\a) \le C\frac{\log |\AAA_n|}{\log \a(1)}$ for all nontrivial irreducible characters $\a$ of $\AAA_n$. Also for classsical groups of symplectic or orthogonal type of rank $r$, we establish a linear upper bound $Cr$ on the diameters of all nontrivial McKay graphs. 
 \end{abstract}


\date{\today}


\footnote{The second author acknowledges the support of ISF grant 686/17, and the Vinik chair of mathematics which he holds. 
The third author gratefully acknowledges the support of the
NSF (grant DMS-1840702), and the Joshua Barlaz Chair in Mathematics. The second and the third authors 
were partially supported by BSF grant 2016072.
The authors also acknowledge the support of the National Science Foundation under Grant No. DMS-1440140 while they were in residence at the Mathematical Sciences Research Institute in Berkeley, California, during the Spring 2018 semester. Part of this work was done when
the authors were in residence at the Isaac Newton Institute for Mathematical Sciences, Cambridge, UK, in Spring 2020, 
and partially supported by a grant from the Simons Foundation.}


\section{Introduction}

For a finite group $G$, and a (complex) character $\a$ of $G$, the {\it McKay graph} $\MC(G,\a)$ is defined to be the directed graph with vertex set ${\rm Irr}(G)$, there being an edge from $\c_1$ to $\c_2$ if and only if $\c_2$ is a constituent of $\a\c_1$.
A classical result of Burnside and Brauer \cite{Br} shows that $\MC(G,\a)$ is connected if and only if $\a$ is faithful. 

The study of McKay graphs for finite simple groups $G$ was initiated in \cite{LST}, with a particular focus on the diameters of these graphs. Theorem 2 of \cite{LST} establishes a quadratic upper bound $\hbox{diam}\,{\mathcal M}(G,\a) \le Cr^2$ for any simple group $G$ of Lie type or rank $r$ and any nontrivial $\a \in {\rm Irr}(G)$. Notice that  the smallest (resp. largest) nontrivial irreducible character degrees of $G$ are at most $q^{cr}$ (resp. at least $q^{c'r^2}$), where $c,c'$ are constants, and hence the maximal diameter of a McKay graph  ${\mathcal M}(G,\a)$ is at least a linear function of $r$. Theorem 3 of \cite{LST} implies a linear upper bound on these diameters for the classical groups $G=\PSL_n^\e(q)$, provided $q$ is large compared to $n$. Our first main result establishes a linear upper bound for the remaining classical groups.

\begin{theorem}\label{main1}
 Let $G$ be a quasisimple classical group $Sp_n(q)$ or $\O_n^\e(q)$, and let $\a$ be a nontrivial irreducible character of $G$. Then $\hbox{diam}\,{\mathcal M}(G,\a) \le Cn$, where $C=16$ or $32$, respectively.
\end{theorem}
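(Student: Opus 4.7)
The existing quadratic bound $Cr^2$ from Theorem 2 of \cite{LST} comes from a Burnside--Brauer style argument that works uniformly in $\alpha$. To improve this to $Cn$ we should split the argument into cases according to the magnitude of $\alpha(1)$, and treat the low-degree (small $\alpha(1)$) regime by hand.

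\emph{High-degree case.} Suppose $\alpha(1) \ge q^{cn}$ for a suitable absolute constant $c>0$. Combine Burnside--Brauer with the character-value bounds of Liebeck--Shalev--Tiep type already used in \cite{LST}: for any nontrivial $g\in G$ one has $|\chi(g)|\le \chi(1)^{1-\delta}$ for some $\delta>0$ depending only on the Lie type, and this forces $\alpha^k$ to contain every irreducible as soon as $\alpha(1)^k \ge |G|^{1/\delta}$. Since $\log|G|=O(n^2\log q)$, this yields $\diam\MC(G,\alpha) \le C\log|G|/\log\alpha(1) = O(n)$. The work here is only to track the constant and confirm it comes out at most $16$ (resp. $32$).

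\emph{Low-degree case.} It remains to handle characters of degree $<q^{cn}$. By the classification of low-dimensional representations of finite classical groups (Guralnick--Magaard--Saxl--Tiep, Tiep--Zalesski, going back to Landazuri--Seitz), for $G=\Sp_n(q)$ these are essentially the two irreducible Weil characters (of degree $\tfrac12(q^{n/2}\pm 1)$) and a few exceptions of similar size, while for $G=\O_n^\e(q)$ they are the constituents of the rank-$3$ permutation action on singular $1$-spaces and their small companions. For each such $\alpha$ I would compute $\alpha^2$ or $\alpha\bar\alpha$ explicitly --- for Weil characters of $\Sp_n(q)$ this decomposition is classical (Howe, Gerardin, Tiep--Zalesski) and is parametrised by symmetric bilinear forms on the natural module --- and show that after $O(1)$ tensor powers one lands on an irreducible constituent $\chi$ with $\chi(1) \ge q^{cn}$. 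At this point the high-degree case kicks in, and we bound the remaining distance from $\chi$ to any target by $O(n)$.

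\emph{Main obstacle.} The critical difficulty is the \emph{bootstrap} for Weil and near-Weil characters: their degree sits essentially at the Landazuri--Seitz minimum, so even a single tensor square must be shown to produce a constituent whose degree has jumped by a factor $q^{cn}$, uniformly in $q$ and $n$ --- any slack here gets multiplied by $n$ and returns us to the quadratic bound. Closely tied to this is the precise tracking of the constants $16$ and $32$: the orthogonal case is harder because the smallest characters there are not Weil but come from a rank-$3$ permutation representation, with a less symmetric tensor decomposition, which is presumably responsible for the worse constant. Finally, one must check the handful of small-rank and small-$q$ cases separately, where the asymptotic classification of minimal-degree characters has extra exceptional entries.
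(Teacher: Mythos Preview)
Your high-degree argument has a genuine gap. The claimed uniform bound $|\chi(g)|\le\chi(1)^{1-\delta}$ with $\delta>0$ depending only on the Lie type is false: take $g$ a transvection (support $s=1$) and $\chi$ any irreducible of degree $q^{cn}$; the ratio $|\chi(g)/\chi(1)|$ is of order $q^{-1}$, which forces $\delta\le 1/(cn)$. What is actually available (e.g.\ via \cite{BLST}) is a bound of the shape $|\chi(g)|\le f(n)\,\chi(1)^{1-s/n}$ with $s=\supp(g)$, and the prefactor $f(n)$ is exactly what forces the hypothesis $q>g(n)$ in \cite[Theorem 3]{LST}. You cannot afford that hypothesis here, so the high-degree case as written does not close; and since your low-degree case feeds into the high-degree one, the whole scheme stalls.

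The paper avoids the degree dichotomy altogether. Its mechanism is a set of reduction lemmas (Lemmas \ref{mc-r2}--\ref{mc-r4}): for \emph{every} nontrivial $\psi\in\Irr(G)$, the power $\psi^2$ (symplectic) or $\psi^4$ (orthogonal) already contains one of a fixed short list $\{\alpha,\beta,\gamma_i\}$, namely the non-principal constituents of $\Ind^G_{[P,P]}(1)$ for $P$ the stabiliser of a singular $1$-space. This uses nothing about $\psi(1)$; it comes from the elementary fact that $\psi|_Q$ is reducible for $Q$ the unipotent radical of $P$, plus a reality argument at semisimple elements. All the analytic work is then concentrated on proving, for these \emph{specific} characters only, the explicit ratio bound $|\chi(g)/\chi(1)|\le q^{-s/3}$ uniformly in $q$ (Propositions \ref{rat-so21} and \ref{rat-sp-so22}); this is where the bulk of Section \ref{prel1} goes, including an explicit formula for $\beta$ (Theorem \ref{beta-so2}). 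With that bound, the Steinberg-counting argument gives $\St\subseteq\chi^{4n}$ (Lemma \ref{stein}), and the constants $16$ and $32$ fall out as $2\cdot 2\cdot 4n$ and $2\cdot 4\cdot 4n$ from the reduction factor, the exponent $4n$, and the final step $\St^2\supseteq\Irr(G)$.

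So the ``main obstacle'' you identify --- bootstrapping from a Weil-sized character up to a large one --- is precisely what the paper does \emph{not} attempt. Instead of climbing up, it pulls every character \emph{down} to the rank-$3$ family in $O(1)$ steps and does the sharp estimate there.
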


An obvious lower bound for $\hbox{diam}\,{\mathcal M}(G,\a)$ (when $\a(1)>1$) is given by 
$\frac{\log \bmax(G)}{\log \a(1)}$, where $\bmax(G)$ is the largest degree of an irreducible character of $G$. In \cite[Conjecture 1]{LST} we conjectured that for simple groups $G$, this bound is tight up to a multiplicative constant. This conjecture was proved in \cite[Theorem 3]{LST} for the simple groups $\PSL_n^\e(q)$, provided $q$ is large compared to $n$. Recently it has also been established for the symmetric groups in \cite{S}. Deducing it for the alternating groups is not entirely trivial, and this is the content of our next result.

\begin{theorem}\label{main2}
There is an effective absolute constant $C$ such that, for all $n \geq 5$ and for all nontrivial irreducible characters $\a$ of  $G:=\AAA_n$,
$$\hbox{diam}\,{\mathcal M}(G,\a) \le C\frac{\log |G|}{\log \a(1)}.$$
\end{theorem}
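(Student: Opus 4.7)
My plan is to deduce the alternating group bound from the symmetric group result of \cite{S} via Clifford theory for $\AAA_n \triangleleft \SSS_n$. Let $\sigma$ denote the outer automorphism of $\AAA_n$ induced by conjugation by a transposition in $\SSS_n$. Every nontrivial $\alpha \in \Irr(\AAA_n)$ falls into one of two types: \emph{(A) non-split}, where $\alpha = \chi_\lambda|_{\AAA_n}$ for some $\chi_\lambda \in \Irr(\SSS_n)$ with $\lambda \neq \lambda'$ (so $\alpha^\sigma = \alpha$ and $\alpha(1) = \chi_\lambda(1)$); or \emph{(B) split}, where $\alpha = \alpha^+$ with $\chi_\lambda|_{\AAA_n} = \alpha^+ + \alpha^-$ for self-conjugate $\lambda = \lambda'$ (so $\alpha^\sigma = \alpha^-$ and $\alpha(1) = \chi_\lambda(1)/2$).

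For Case (A), apply \cite{S} to $\chi_\lambda$ to obtain $\diam\,\MC(\SSS_n,\chi_\lambda) \le D := C'\log|\SSS_n|/\log \chi_\lambda(1)$. Since tensor product commutes with restriction, $\chi_\lambda^k|_{\AAA_n} = \alpha^k$. The character $\chi_\lambda^D$ contains every $\chi_\mu \in \Irr(\SSS_n)$ as a constituent (by the $\SSS_n$-diameter bound), so its restriction $\alpha^D$ contains every $\chi_\mu|_{\AAA_n}$ and hence every irreducible of $\AAA_n$. Thus $\diam\,\MC(\AAA_n,\alpha) \le D$, which yields the desired bound after absorbing the absolute constant factor relating $\log|\SSS_n|$ and $\log|\AAA_n|$.

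For Case (B), the same argument applied to $\chi_\lambda$ gives $\diam\,\MC(\AAA_n,\alpha^+ + \alpha^-) \le D$. However, $\alpha^+$ has strictly fewer edges than $\alpha^+ + \alpha^-$, so this does not bound $\diam\,\MC(\AAA_n,\alpha^+)$ directly. My plan is to exhibit a Type I (that is, $\sigma$-invariant, non-split) irreducible $\gamma \in \Irr(\AAA_n)$ satisfying (i) $\gamma$ is a constituent of $(\alpha^+)^{k_0}$ for some absolute constant $k_0$, and (ii) $\gamma(1) \ge \alpha(1)^{c_0}$ for some absolute $c_0 > 0$. Given such $\gamma$, Case (A) applied to $\gamma$ yields $\diam\,\MC(\AAA_n,\gamma) \le \tilde C \log|\AAA_n|/\log \gamma(1)$. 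Moreover, $\gamma \subseteq (\alpha^+)^{k_0}$ implies $\gamma^k \subseteq (\alpha^+)^{k_0 k}$, so any walk of length $k$ in $\MC(\AAA_n,\gamma)$ is realized as a walk of length $k_0 k$ in $\MC(\AAA_n,\alpha^+)$. Therefore $\diam\,\MC(\AAA_n,\alpha^+) \le k_0\,\diam\,\MC(\AAA_n,\gamma) \le (k_0 \tilde C / c_0)\log|\AAA_n|/\log \alpha(1)$, completing the proof.

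The main obstacle is the existence of $\gamma$ satisfying (i)--(ii). My approach starts from the identity $\chi_\lambda^2|_{\AAA_n} = (\alpha^+)^2 + 2\alpha^+\alpha^- + (\alpha^-)^2$ together with the Galois automorphism swapping $\alpha^+ \leftrightarrow \alpha^-$. For every Type I $\gamma = \chi_\mu|_{\AAA_n}$ with $\mu \ne \mu'$ this yields the identity $\langle(\alpha^+)^2,\gamma\rangle + \langle \alpha^+\alpha^-,\gamma\rangle = \langle \chi_\lambda^2, \chi_\mu\rangle$, where the right-hand side is a Kronecker coefficient. A sufficiently large non-self-conjugate constituent $\chi_\mu$ of $\chi_\lambda^2$ thus forces $\gamma$ into either $(\alpha^+)^2$ (in which case we take $k_0 = 2$) or $\alpha^+\alpha^-$; in the latter case, multiplying by $\alpha^+$ and using $\alpha^+\cdot(\alpha^+\alpha^-)\subseteq (\alpha^+)^3$ places a related Type I constituent of comparable degree into $(\alpha^+)^3$. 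The delicate combinatorial step is therefore showing that $\chi_\lambda^2$ has non-self-conjugate Kronecker constituents $\chi_\mu$ with $\chi_\mu(1) \ge \chi_\lambda(1)^c$ for some absolute $c > 0$; I plan to establish this via hook-length and Murnaghan--Nakayama estimates, exploiting that $\lambda = \lambda'$ forces most near-neighbours $\mu$ of $\lambda$ in Young's lattice to be non-self-conjugate, together with the Frobenius-type formula $\langle \chi_\lambda \chi_\nu, \chi_\mu\rangle = \langle \chi_\nu, \chi_\lambda \chi_\mu\rangle$ applied to small auxiliary $\nu$. For characters $\alpha$ whose degree is so small that this degree comparison fails, the bound is still delivered by taking $\gamma$ to be any nontrivial Type I irreducible, which necessarily has $\gamma(1) \ge n-1$.
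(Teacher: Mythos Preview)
Your Case (A) is correct and matches the paper. The genuine gap is in Case (B), at two places.

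First, the containment ``$\alpha^+\cdot(\alpha^+\alpha^-)\subseteq (\alpha^+)^3$'' is simply false: the left side equals $(\alpha^+)^2\alpha^-$, and since $\alpha^-\neq\alpha^+$ there is no reason for $(\alpha^+)^2\alpha^-$ to be a subcharacter of $(\alpha^+)^3$. (Already for $n=3$, $\lambda=(2,1)$, one has $(\alpha^+)^2=\alpha^-$ while $\alpha^+\alpha^-=1_G$, so the trivial character sits in $\alpha^+\alpha^-$ but not in $(\alpha^+)^2$, and your proposed escape route does not recover it.) Thus your dichotomy ``$\gamma\subseteq(\alpha^+)^2$ or $\gamma\subseteq\alpha^+\alpha^-$'' does not, as stated, produce a large non-split constituent of any bounded power $(\alpha^+)^{k_0}$. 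You would need an independent argument that some specific large non-self-conjugate $\chi_\mu\subseteq\chi_\lambda^2$ actually lands in $(\alpha^+)^2$ rather than only in $\alpha^+\alpha^-$, and nothing in your outline addresses this.

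Second, even the preliminary ``delicate combinatorial step'' --- that $\chi_\lambda^2$ has a non-self-conjugate constituent $\chi_\mu$ with $\chi_\mu(1)\ge\chi_\lambda(1)^c$ --- is not established; the gestures toward hook lengths, Murnaghan--Nakayama, and Young-lattice neighbours do not constitute a proof, and controlling degrees of Kronecker constituents is notoriously hard.

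The paper avoids both difficulties by a different mechanism. Rather than seeking a large non-split constituent inside a \emph{bounded} power of $\alpha^+$, it restricts $\alpha^+$ to $\AAA_{n-1}$: removing a corner node from the self-conjugate $\lambda$ gives a non-self-conjugate $\lambda\smallsetminus A$, so (after possibly swapping $\alpha^+\leftrightarrow\alpha^-$) $\alpha^+|_{\AAA_{n-1}}$ contains the non-split $\rho=\chi^{\lambda\smallsetminus A}|_{\AAA_{n-1}}$ with $\rho(1)\ge(2/n)\alpha^+(1)$. Sellke's theorem applied to $\rho$ on $\SSS_{n-1}$ then shows that $(\alpha^+)^s$, for $s$ already of order $n\log n/\log\alpha^+(1)$, contains (after inducing back up) a specific non-split $\psi|_{\AAA_n}$ whose degree is enormous: $\log\psi(1)\ge \tfrac{2}{5}n\log n$ (via a staircase estimate, Lemma~\ref{staircase}). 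Hence a \emph{bounded} further power $\psi^t$ covers $\Irr(\SSS_n)$, and $(\alpha^+)^{st}$ covers $\Irr(\AAA_n)$. The trade-off is opposite to yours: the paper allows $s$ to be as large as the final bound but guarantees $\psi$ so large that $t$ is an absolute constant, whereas you want $k_0$ bounded and $\gamma(1)\ge\alpha(1)^{c_0}$ --- a statement you have not proved and whose proposed proof contains an error.
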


In our final result, we consider covering ${\rm Irr}(G)$ by products of arbitrary irreducible characters, instead of powers of a fixed character. This idea was suggested by Gill \cite{G}, inspired by an analogous result of Rodgers and Saxl \cite{RS} for conjugacy classes in $G=\SL_n(q)$: this states that if a collection of conjugacy classes of $G$ satisfies the condition that the product of the class sizes is at least $|G|^{12}$, then the product of the classes is equal to $G$.

As a piece of notation, for characters $\c_1,\ldots,\c_l$ of $G$, we write $\c_1\c_2\cdots \c_l \supseteq \Irr(G)$ to mean that every irreducible character of $G$ appears as a constituent of $\c_1\c_2\cdots \c_l$. Also, let $g: \N\to \N$ be the function appearing in \cite[Theorem 3]{LST}.

\begin{theorem}\label{rodsax} 
\begin{itemize}
\item[{\rm (i)}]  Let $G$ be a simple group of Lie type of rank $r$, let $l \ge 489r^2$, and let $\c_1,\ldots,\c_l \in \Irr(G) \setminus 1_G$. Then $\c_1\c_2\cdots \c_l \supseteq \Irr(G)$.

\vspace{2mm}
\item[{\rm (ii)}] Let $G = \PSL_n^\e(q)$ with $q>g(n)$, let $l \in \N$, and let $\c_1,\ldots \c_l \in \Irr(G)$ satisfy $\prod_1^l \c_i(1) > |G|^{10}$. Then $\c_1\c_2\cdots \c_l \supseteq \Irr(G)$.
\end{itemize}
\end{theorem}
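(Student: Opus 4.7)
The coefficient $489 = 3 \cdot 163$ strongly suggests a three-block reduction to \cite[Theorem~2]{LST}, whose diameter bound has the form $C_0 r^2$ with $C_0 = 163$. My plan is to identify a distinguished irreducible $\tau \in \Irr(G)$ --- the Steinberg character $\St_G$ is the natural candidate, since $\St_G^m \supseteq \Irr(G)$ for small absolute $m$ (using Steinberg's vanishing off semisimple elements and the known formulas for small powers of $\St_G$) --- and to prove a \emph{uniform chain lemma}: for every sequence $\chi_1, \ldots, \chi_k$ of nontrivial irreducibles with $k \ge 163 r^2$, the product $\chi_1 \chi_2 \cdots \chi_k$ contains $\tau$ as a constituent. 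The idea is to revisit the proof of \cite[Theorem~2]{LST} and check that each multiplication by an \emph{arbitrary} nontrivial character advances the underlying inductive invariant (a level, a parameter in Harish-Chandra data, or a McKay-graph distance to $\tau$) by at least one, so after $163 r^2$ multiplications one always reaches $\tau$. Granting this chain lemma, partition the $l \ge 3 \cdot 163 r^2$ characters into three consecutive blocks of size $\ge 163 r^2$; each block's product then contains $\tau$, so $\chi_1 \cdots \chi_l \supseteq \tau^3 \supseteq \Irr(G)$. The main obstacle is the chain lemma itself: the LST argument was written for iterated powers of a fixed $\chi$, and one must verify that its inductive step is robust when the multiplier changes from step to step --- essentially a uniformity question about whichever combinatorial invariant drives the induction.

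\textbf{Part (ii).} For $G = \PSL_n^\e(q)$ with $q > g(n)$, the hypothesis $\prod_i \chi_i(1) > |G|^{10}$ is exactly the signature of a character-ratio argument with exponent $9/10$. My plan is to invoke the uniform bound
\[
|\chi(g)| \le \chi(1)^{9/10} \quad \text{for every } \chi \in \Irr(G)\setminus\{1_G\} \text{ and every } g \in G\setminus\{1\},
\]
which is available in the regime $q > g(n)$ via the Deligne--Lusztig type estimates underpinning \cite[Theorem~3]{LST}. For any target $\psi \in \Irr(G)$, split
\[
\langle \chi_1 \cdots \chi_l, \psi \rangle \;=\; \frac{\psi(1) \prod_i \chi_i(1)}{|G|} \;+\; \frac{1}{|G|} \sum_{g \ne 1} \chi_1(g) \cdots \chi_l(g) \overline{\psi(g)}.
\]
The error term has absolute value at most $\tfrac{\psi(1)(|G|-1)(\prod_i \chi_i(1))^{9/10}}{|G|}$, which is dominated by the main term exactly when $(\prod_i \chi_i(1))^{1/10} > |G| - 1$, that is, precisely under the hypothesis. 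Hence $\langle \chi_1 \cdots \chi_l, \psi \rangle > 0$ for every $\psi \in \Irr(G)$, so $\chi_1 \cdots \chi_l \supseteq \Irr(G)$. The main obstacle here is pinning down the $9/10$ exponent (or any exponent $\alpha$ with $1/(1-\alpha) \le 10$) in the uniform character-ratio bound over all nontrivial $\chi$ and all $g \ne 1$; since $G$ is simple we have $Z(G) = 1$, so no separate treatment of central elements is required.
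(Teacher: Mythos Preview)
Your Part (i) strategy matches the paper's exactly: split into three blocks of size $\ge 163r^2$, show each block's product contains $\St$, and conclude via $\St^3 \supseteq \Irr(G)$ (the paper cites \cite[Prop.~2.1]{LST} for this last step). However, your description of how the ``chain lemma'' works is off. The proof of \cite[Lemma~2.3]{LST} is not an inductive distance or level argument; it is a direct estimate of
\[
[\chi^m, \St]_G = \frac{1}{|G|}\sum_{g \in \GSS} \epsilon_g\, \chi(g)^m\, |\CB_G(g)|_p
\]
using a uniform character-ratio bound for nontrivial $\chi$ at semisimple $g$, together with a count of semisimple elements stratified by support. Replacing $\chi(g)^m$ by $\prod_{i=1}^m \chi_i(g)$ is immediate because the ratio bound applies to each factor separately --- this is exactly the uniformity you flag, but no new inductive mechanism is required.

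Your Part (ii) has a genuine gap: the uniform bound $|\chi(g)| \le \chi(1)^{9/10}$ that you invoke is \emph{false} for $n \ge 12$, no matter how large $q$ is. Take $\chi$ to be the smallest nontrivial irreducible of $\PSL_n(q)$, of degree $\chi(1) \sim q^{n-1}$, and let $g$ have support $1$ (e.g.\ a semisimple element with one eigenvalue of multiplicity $n-1$); then $|\chi(g)| \sim q^{n-2}$, whereas $\chi(1)^{9/10} \sim q^{9(n-1)/10} < q^{n-2}$ once $n \ge 12$. Your blanket error bound $(|G|-1)\bigl(\prod_i \chi_i(1)\bigr)^{9/10}$ therefore cannot hold; it fails precisely because it ignores that small-support elements are sparse. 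The paper instead (a) again targets $\St$ rather than an arbitrary $\psi$, so only semisimple $g$ contribute; (b) uses the support-graded bound $|\chi(g)| < f(n)\,\chi(1)^{1-s/n}$ of \cite[Theorem~3.1]{LST}; (c) stratifies the error sum by $s=\supp(g)$; and (d) shows that $\prod_i \chi_i(1) > |G|^3$ already forces $[\prod_i \chi_i, \St]_G \ne 0$ when $q > g(n)$. One then uses $\chi_i(1) < |G|^{1/2}$ to split the original list into three disjoint subsets each with degree-product exceeding $|G|^3$, obtains $\St$ in each subproduct, and finishes with $\St^3 \supseteq \Irr(G)$ as in Part~(i).
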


Gill \cite{G} has conjectured that part (ii) of the theorem holds for all simple groups (with the constant 10 possibly replaced by a different constant). As a stepping stone in the spirit of the linear bound given by Theorem \ref{main1}, let us pose the following more modest conjecture.

\begin{conj}\label{rsax} There is an absolute constant $C>0$ such that the following holds. Let $G=\Cl_n(q)$, a classical simple  group of dimension $n$, or $\AAA_n$, an alternating group of degree $n\ge 5$. Let $l \ge Cn$,  and let $\c_1,\ldots,\c_l \in \Irr(G) \setminus 1_G$. Then $\c_1\c_2\cdots \c_l \supseteq \Irr(G)$.
\end{conj}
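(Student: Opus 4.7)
The natural approach is to establish Conjecture \ref{rsax} by a character-sum argument in the spirit of Frobenius. For every $\c \in \Irr(G)$,
\begin{equation*}
\la \c_1 \c_2 \cdots \c_l, \c \ra = \frac{1}{|G|} \sum_{g \in G} \c_1(g)\c_2(g) \cdots \c_l(g) \overline{\c(g)},
\end{equation*}
and the aim is to show strict positivity once $l \ge Cn$. The $g = 1$ term $\c(1) \prod_i \c_i(1)/|G|$ serves as the main term: since every $\c_i$ is nontrivial we have $\c_i(1) \ge d(G)$, the minimal nontrivial character degree of $G$, and because $\log|G|/\log d(G) \asymp n$ both for $\AAA_n$ and for the classical groups in the conjecture, the main term comfortably exceeds $\c(1)$ as soon as $l$ is a large enough multiple of $n$.

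The error $\frac{1}{|G|}\bigl|\sum_{g \ne 1} \c_1(g) \cdots \c_l(g) \overline{\c(g)}\bigr|$ I would control by invoking exponential character-ratio bounds of the form $|\c_i(g)|/\c_i(1) \le \c_i(1)^{-\kappa \sigma(g)}$, for some absolute $\kappa > 0$ and a support-type function $\sigma(g) > 0$ on $g \ne 1$. In the classical case such bounds are available in the work of Larsen--Shalev--Tiep and Bezrukavnikov--Liebeck--Shalev--Tiep; in the alternating case from Fomin--Lulov, Larsen--Shalev, and M\"uller--Schlage-Puchta (and implicitly in the proof of Theorem \ref{main2}). Grouping $g$ by conjugacy class $C$, the error bound becomes a Witten-zeta-style sum
\begin{equation*}
\frac{\c(1) \prod_i \c_i(1)}{|G|} \sum_{C \ne \{1\}} |C| \cdot d(G)^{-\kappa l \sigma(C)},
\end{equation*}
which one wants to be $o(1)$ times the main term once $l \ge Cn$ for a suitable absolute $C$.

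The principal obstacle is contributions from elements very close to the identity, where $\sigma(g)$ is small and the character-ratio bounds offer little decay: long-root elements such as transvections in the classical case, and elements with many fixed points in the alternating case. For such $g$ many ratios $|\c(g)|/\c(1)$ really are close to $1$, and the corresponding classes are too small for the geometric-series estimate to absorb cleanly. To get past this I would attempt a level-based dichotomy: either the $\c_i$ include enough "high-level" representatives that on each near-identity $g$ a constant fraction of them exhibit sharp decay, in which case a weighted version of the Witten-zeta bound suffices; or else the $\c_i$ all concentrate at low level, in which case $d(G)$ can be replaced by the larger minimal degree within a suitable stratum of $\Irr(G)$, improving the main term. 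A complementary, more structural route is to reduce Conjecture \ref{rsax} to Theorems \ref{main1} and \ref{main2}: show that any product of $O(1)$ nontrivial characters must contain as a constituent some fixed "generic" character $\rho$ of large degree, and then spend the remaining $Cn - O(1)$ factors traversing $\MC(G, \rho)$, whose diameter is $\le C_0 n$. Making either route uniform over all the relevant families, and in particular over the small-rank and small-$n$ boundary cases, is where I expect the genuine technical work to lie.
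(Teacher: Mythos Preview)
The statement you are attempting to prove is labelled \emph{Conjecture}~\ref{rsax} in the paper, and indeed the paper does not prove it: it is posed as an open problem, with only the partial result Proposition~\ref{rs2-an} offered in the alternating case. So there is no ``paper's own proof'' to compare against, and your proposal should be read as a strategy toward an open conjecture rather than an alternative proof.

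That said, your primary route --- the direct Frobenius character-sum with uniform exponential character-ratio bounds --- has a concrete and fatal gap, not merely a technical one. Take $G=\AAA_n$ and let every $\c_i$ be the restriction of the standard character $\c^{(n-1,1)}$, so $\c_i(1)=n-1$. For a $3$-cycle $g$ one has $\c_i(g)=n-4$, hence $|\c_i(g)|/\c_i(1)=1-3/(n-1)$. The contribution of the $3$-cycles alone to your normalised error sum $\sum_{g\ne 1}\prod_i|\c_i(g)/\c_i(1)|$ is therefore
\[
2\binom{n}{3}\Bigl(1-\tfrac{3}{n-1}\Bigr)^{l}\;\asymp\;\tfrac{n^3}{3}\,e^{-3l/n},
\]
which for $l=Cn$ is of order $n^3 e^{-3C}$ and tends to infinity with $n$ for every absolute constant $C$. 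Thus no bound of the shape $|\c_i(g)|/\c_i(1)\le \c_i(1)^{-\kappa\sigma(g)}$ with absolute $\kappa>0$ can hold (already this single example forces $\kappa\lesssim 1/\log n$), and the Witten-zeta estimate you propose cannot close. The obstacle you flag for ``near-identity'' elements is not a boundary nuisance but the heart of the difficulty.

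Your alternative structural route is closer in spirit to what the paper actually does for its partial result. Proposition~\ref{rs2-an} does not try to find a single generic $\rho$ inside a bounded product; instead it shows that each \emph{square} $\c_i^2$ contains one of the four fixed low-level characters $\c^{(n-1,1)},\c^{(n-2,2)},\c^{(n-2,1^2)},\c^{(n-3,3)}$ (restricted to $G$), and then uses pigeonhole over $i$ to extract a power $\b^{2n-2}$ of a single such $\b$, which is known to cover $\Irr(\SSS_n)$. This is why the paper only obtains the conclusion for $(\prod_i\c_i)^2$, or under the additional hypothesis that the $\c_i$ come in equal pairs: the squaring is essential to force a constituent from a \emph{finite} list independent of $i$. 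Removing that squaring --- i.e.\ proving Conjecture~\ref{rsax} as stated --- is precisely what remains open.
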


See Proposition \ref{rs2-an} for some partial result on Conjecture \ref{rsax} in the cae of $\AAA_n$.

The layout of the paper is as follows. Section \ref{prel1} contains a substantial amount of character theory for symplectic and orthogonal groups that is required for the proof of Theorem \ref{main1}, which is completed in Section \ref{pfth1}. The remaining sections \ref{pfth2} and \ref{pfth3} contain the proofs of Theorems \ref{main2} and \ref{rodsax}, respectively.

\section{Some character theory for symplectic and orthogonal groups}\label{prel1}

Let $V = \F_q^d$ be endowed with a non-degenerate, alternating or quadratic of type $\e = \pm$, form and 
let $G$ denote the derived subgroup of the full isometry group of the form. Assume that $G$ is quasisimple, so that $G = \Sp(V) = \Sp_d(q)$ or $\O(V) = \O^\e_d(q)$. 

This section contains a detailed study of some specific irreducible characters $\c$ of $G$ -- namely, the constituents of the permutation character $\Ind^G_{[P,P]}(1_{[P,P]})$, where $P$ is the maximal parabolic subgroup of $G$ stabilizing a singular 1-space. Two of the main results of the section are Propositions \ref{rat-so21} and \ref{rat-sp-so22}, which give upper bounds for the character ratios $|\c(g)/\c(1)|$ for $g\in G$. These will be used in Section \ref{pfth1} to prove Theorem \ref{main1}.

\subsection{Reduction lemmas}\label{red}
It is well known that the permutation action of $G$ on the set of singular $1$-spaces of 
$V$ is primitive of rank $3$, and thus its character is $\rho = 1_G + \a + \b$, with $\a, \b \in \Irr(G)$. Let (the parabolic subgroup) $P=QL$ denote a point stabilizer in this action, with $Q$ the unipotent radical and $L$ a Levi subgroup.
Aside from $\a,\b$, we also need to consider the remaining non-principal irreducible 
constituents $\g_i$ of $\Ind^G_{[P,P]}(1_{[P,P]})$. Let $\St$ denote the 
Steinberg character of $G$.

\begin{lem}\label{mc-r1}
The following statements hold.
\begin{enumerate}[\rm(i)]
\item Suppose that every semisimple element $s \in G$ is real. Then for any $\chi \in \Irr(G)$ and $k \in \N$, $\chi^{2k}$ contains
$\St$ if and only if $(\c\overline\c)^k$ contains $\St$.
\item All semisimple elements in $G$ are real, if $G = \Sp_{2n}(q)$, $\O_{2n+1}(q)$, or $\O^\e_{4n}(q)$.
\end{enumerate}
\end{lem}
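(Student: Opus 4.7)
The plan is to exploit the vanishing property of the Steinberg character. Recall that for a finite group of Lie type $G$ in defining characteristic $p$, one has $\St(g)=0$ whenever the $p$-part of $g$ is non-trivial, while $|\St(s)|=|C_G(s)|_p\ne 0$ for every semisimple $s$. Thus the support of $\St$ is exactly $\GSS$, and any inner product against $\St$ collapses to a sum over semisimple elements.

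For part (i), by the standard inner product formula together with the above vanishing,
$$
\la \c^{2k}, \St\ra_G = \frac{1}{|G|}\sum_{s\in \GSS}\c(s)^{2k}\,\overline{\St(s)}, \qquad \la (\c\overline\c)^k,\St\ra_G = \frac{1}{|G|}\sum_{s\in \GSS}|\c(s)|^{2k}\,\overline{\St(s)}.
$$
Under the hypothesis that every semisimple $s\in G$ is real, $\c(s)=\c(s^{-1})=\overline{\c(s)}$ is real, hence $\c(s)^{2k}=|\c(s)|^{2k}$. The two sums therefore coincide, so $\St$ appears with the same (nonnegative integer) multiplicity in $\c^{2k}$ and in $(\c\overline\c)^k$, which gives (i).

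For part (ii), I would appeal to the classical reality theorems for finite isometry groups, due essentially to Wonenburger, Wall and Gow: every element of $\Sp_{2n}(q)$ is real; every element of $\O_{2n+1}(q)$ is real; and in the even-dimensional orthogonal groups $\O^\e_{2m}(q)$ every element is real precisely when $m$ is even, which is exactly the case $m=2n$ in the statement. A direct argument would decompose a semisimple $s\in G$ according to its rational canonical blocks on $V$, group the eigenvalue types into self-inverse and inverse-paired pieces, and exhibit a form-preserving involution inverting each block; the obstruction in the odd-$m$ orthogonal case is that such an inverting isometry does not always lie inside the derived subgroup.

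The main obstacle is really part (ii), and specifically the bookkeeping needed to check that the inverting element can be chosen inside the quasisimple derived subgroup rather than the full isometry group --- this is precisely what singles out $\O^\e_{4n}$ among all even-dimensional orthogonal groups. Part (i) is a short character-theoretic manipulation and should present no difficulty once the vanishing of $\St$ off $\GSS$ is in hand.
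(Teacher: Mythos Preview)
Your proposal is correct and follows essentially the same line as the paper. For (i) the paper does exactly your computation: use $\St(g)=0$ off $\GSS$ and $\chi(s)\in\mathbb{R}$ for real $s$ to get $[\chi^{2k},\St]_G=[(\chi\overline\chi)^k,\St]_G$; for (ii) the paper simply cites \cite[Proposition 3.1]{TZ2} where you invoke the Wonenburger--Wall--Gow circle of results, but the content is the same appeal to known reality theorems (note that the lemma only needs reality of \emph{semisimple} elements, which is weaker than some of the blanket reality statements you mention).
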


\begin{proof}
(i) Recall that $\St(g) = 0$ if $g \in G$ is not semisimple. Furthermore, $\c(g) = \overline\c(g)$ if $g \in G$ is semisimple, by hypothesis. Hence 
$$\begin{aligned}
   ~[\chi^{2k},\St]_G & = \frac{1}{|G|}\sum_{g \in G}\chi(g)^{2k}\overline\St(g)\\ 
   & = \frac{1}{|G|}\sum_{g \in G,~g\mbox{ {\tiny semisimple}}}\chi(g)^{2k}\overline\St(g)\\
   & =  \frac{1}{|G|}\sum_{g \in G,~g\mbox{ {\tiny semisimple}}}\chi(g)^{k}\overline\c(g)^k\overline\St(g)\\
   & = \frac{1}{|G|}\sum_{g \in G}\c(g)^k\overline\c(g)^{k}\overline\St(g) = [(\c\overline\c)^k,\St]_G,
   \end{aligned}$$
and the claim follows.   

\smallskip
(ii) This is well known, see e.g. \cite[Proposition 3.1]{TZ2}.
\end{proof}

\begin{lem}\label{mc-r2}
Let $G = \Sp(V) = \Sp_{2n}(q)$ with $n \geq 3$. Suppose $C \in \N$ is such that both $\a^C$ and $\b^C$ contain $\St$.
Then for any $1_G \neq \chi \in \Irr(G)$, $\c^{2C}$ contains $\St$.
\end{lem}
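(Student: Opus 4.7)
The plan is to reduce the claim to showing that $\c\overline\c$ contains $\a$ or $\b$ as an irreducible constituent. Since $G = \Sp_{2n}(q)$, Lemma~\ref{mc-r1}(ii) tells us that every semisimple element of $G$ is real, so Lemma~\ref{mc-r1}(i) (with $k = C$) gives $[\c^{2C},\St]_G = [(\c\overline\c)^C,\St]_G$. Hence it suffices to prove $(\c\overline\c)^C \supseteq \St$. Observe that if $\c\overline\c$ contains $\a$ (resp. $\b$) as a constituent, then $(\c\overline\c)^C \supseteq \a^C \supseteq \St$ (resp. $\b^C \supseteq \St$) by hypothesis, and we are done.

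To check when $\c\overline\c$ contains $\a$ or $\b$, I would compute, using $\rho = \Ind^G_P(1_P) = 1_G + \a + \b$ together with Frobenius reciprocity and the projection formula:
\begin{align*}
[\c\overline\c,\a+\b]_G &= [\c\overline\c,\rho]_G - [\c\overline\c,1_G]_G \\
&= [\c, \rho \cdot \c]_G - 1 = [\c, \Ind^G_P(\c|_P)]_G - 1 = [\c|_P,\c|_P]_P - 1.
\end{align*}
Thus the key claim is equivalent to $[\c|_P,\c|_P]_P \geq 2$, i.e., to the reducibility of $\c|_P$ for every nontrivial $\c \in \Irr(G)$.

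The main obstacle is to prove this reducibility when $n \geq 3$. My approach would be via Clifford theory applied to the normal subgroup $Q \trianglelefteq P$, where $Q$ is the Heisenberg-type unipotent radical of order $q^{2n-1}$ with centre $Z(Q) \cong \F_q$, and $L \cong \Sp_{2n-2}(q) \times \F_q^*$ is the Levi complement acting on $\Irr(Q)$. If $\c|_P$ were irreducible, then together with $Q \not\subseteq \ker\c$ (valid because $G$ is quasisimple and $\c$ is nontrivial), Clifford's theorem would force $\c|_Q$ to be a sum of $L$-conjugates of a single non-trivial $\psi \in \Irr(Q)$, with constant multiplicity. Examining the non-trivial $L$-orbits on $\Irr(Q)$ -- one of size $q^{2n-2} - 1$ on the non-trivial linear characters, and several of size dividing $q-1$ on the non-linear characters of degree $q^{n-1}$ -- tightly constrains $\c(1)$ to the form $m \cdot |O_L(\psi)| \cdot \psi(1)$. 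A case-by-case comparison with known lower bounds on the degrees of nontrivial irreducible characters of $\Sp_{2n}(q)$ should rule out all possibilities when $n \geq 3$, completing the argument. This last step is the technical heart of the proof.
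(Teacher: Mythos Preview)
Your reduction is exactly what the paper does: use Lemma~\ref{mc-r1} to pass from $\chi^{2C}$ to $(\chi\overline\chi)^C$, and then show $\chi\overline\chi$ contains $\a$ or $\b$ via the identity $[\chi\overline\chi,\a+\b]_G=[\chi|_P,\chi|_P]_P-1$. The only issue is how you establish that $\chi|_P$ is reducible.

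The paper does this in one line by quoting \cite[Theorem~1.6]{T}: for $n\ge 3$ the long-root subgroup $Z$ (which is $Z(Q)$ here) has a nonzero fixed point on any nontrivial $\C G$-module. Since $P=\NB_G(Z)$ acts on $V^Z$, and $Z\not\le\ker\chi$ forces $V^Z\ne V$, this gives a proper nonzero $P$-submodule.

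Your proposed route via Clifford theory and degree bounds does not close. Split into the two cases you identify. If $\psi$ is a nontrivial \emph{linear} character of $Q$, the contradiction is not with degree bounds at all: every linear character of $Q$ is trivial on $[Q,Q]=Z$, so $\chi|_Z$ would be trivial, forcing $Z\le\ker\chi$, impossible since $Z\not\le\ZB(G)$. If $\psi$ is \emph{non-linear} of degree $q^{n-1}$, then $\chi(1)=m\cdot|O|\cdot q^{n-1}$ with $|O|\mid q-1$; but $m$ is unconstrained (it is the degree of an irreducible of the inertia quotient, which contains $\Sp_{2n-2}(q)$), so no lower bound on nontrivial character degrees of $\Sp_{2n}(q)$ is violated. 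Indeed, for $n\le 2$ this case genuinely occurs (e.g.\ the cuspidal Weil characters of $\SL_2(q)$ restrict irreducibly to the Borel), which is exactly why the hypothesis $n\ge 3$ is needed. Ruling out the non-linear case is equivalent to showing $V^Z\ne 0$, and that is precisely the content of \cite[Theorem~1.6]{T}; it is the missing ingredient in your sketch.
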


\begin{proof}
In the aforementioned rank $3$ permutation action of $G$ with character $\rho = 1_G+\a+\b$, a point stabilizer 
$P$ is the normalizer $\NB_G(Z)$ of some long-root subgroup $Z$. Since $n \geq 3$, $Z$ has a nonzero fixed point on any
$\C G$-module affording $\c$ by \cite[Theorem 1.6]{T}. It follows that $\c|_P$ is reducible, and so
\begin{equation}\label{eq:mc1}
  2 \leq [\c|_P,\c|_P]_P = [\c\overline\c,\Ind^G_P(1_P)]_G = [\c\overline\c,\rho]_G.
\end{equation}  
As $[\c\overline\c,1_G]_G = 1$, $\c\overline\c$ contains either $\a$ or $\b$, whence $(\c\overline\c)^C$ contains $\St$. 
Applying Lemma \ref{mc-r1}, we conclude that $\c^{2C}$ contains $\St$. 
\end{proof}

\begin{lem}\label{mc-r3}
Let $G = \O(V) = \O^\e_{n}(q)$ with $n \geq 5$. Suppose $C \in \N$ is such that both $\a^C$ and $\b^C$ contain $\St$.
Consider any $1_G \neq \chi \in \Irr(G)$, and suppose in addition that either $n \not\equiv 2 (\bmod\ 4)$, or $\c = \overline\c$.
Then $\c^{4C}$ contains $\St$.
\end{lem}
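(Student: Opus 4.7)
The plan is to adapt the strategy of the proof of Lemma \ref{mc-r2} to the orthogonal setting. Let $P$ be the stabilizer in $G = \O^\e_n(q)$ of a singular $1$-space in $V$, a maximal parabolic with rank-$3$ permutation character $\rho = 1_G + \a + \b$. The core step is to show that $\c|_P$ is reducible for every $\c \in \Irr(G) \setminus \{1_G\}$. For this I would exhibit a normal subgroup $Z$ of $P$ --- for instance a long root subgroup, or a suitable central subgroup of the unipotent radical of $P$ --- that has a nonzero fixed point on every nontrivial irreducible $\C G$-module. For $\Sp_{2n}$, Lemma \ref{mc-r2} obtained this from \cite[Theorem 1.6]{T} applied to the long root subgroup $Z = Z(Q)$, so one needs the analogous fixed-point statement for $\O^\e_n$ with $n \geq 5$. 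The fixed subspace $V^Z$ is then a proper nonzero $P$-invariant subspace of any $\c$-module, giving $[\c|_P,\c|_P]_P \geq 2$. By Frobenius reciprocity this yields $[\c\overline\c,\rho]_G \geq 2$, and since $[\c\overline\c,1_G]_G = 1$ we deduce that $\c\overline\c$ contains $\a$ or $\b$; hence $(\c\overline\c)^C \supseteq \St$ by the hypothesis on $C$.

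From here the two cases allowed by the hypothesis split cleanly. If $n \not\equiv 2 \pmod 4$, then Lemma \ref{mc-r1}(ii) guarantees that all semisimple elements of $G$ are real, so Lemma \ref{mc-r1}(i) applied with $k = C$ converts $(\c\overline\c)^C \supseteq \St$ into $\c^{2C} \supseteq \St$, and a fortiori $\c^{4C} \supseteq \St$. If instead $\c = \overline\c$, then $\c$ is real-valued, so $\c\overline\c = \c^2$ as class functions; hence $(\c\overline\c)^C = \c^{2C} \supseteq \St$ directly, and again $\c^{4C} \supseteq \St$.

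The main obstacle is the reducibility step. In the symplectic setting, the center $Z(Q)$ of the unipotent radical $Q$ of $P$ is itself a long root subgroup, so the appeal to \cite[Theorem 1.6]{T} is essentially immediate. In the orthogonal case $Q$ is abelian of order $q^{n-2}$, so $Z(Q) = Q$ is not a single root subgroup, and one must take care to identify a normal subgroup of $P$ whose fixed-point subspace on every nontrivial $\c$-module is both nonzero and proper. This step demands orthogonal-specific input on fixed points of root-type subgroups on irreducible modules, and is where the bulk of the work will lie. I remark that the sketch above actually yields the stronger conclusion $\c^{2C} \supseteq \St$; the exponent $4C$ in the statement appears to be non-tight, presumably chosen for convenience in the later application of the lemma.
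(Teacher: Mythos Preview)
Your plan hinges on showing that $\chi|_P$ is reducible for every nontrivial $\chi \in \Irr(G)$, and you correctly flag this as the obstacle --- but you do not actually overcome it, and in the orthogonal case there is no obvious normal subgroup $Z \trianglelefteq P$ playing the role that the long-root subgroup $Z(Q)$ plays for $\Sp_{2n}$. The unipotent radical $Q$ is abelian here, so the only natural candidate is $Z = Q$ itself; but then you would need $V^Q \neq 0$ for every nontrivial irreducible $\C G$-module $V$, which is not established and is exactly what the paper's argument avoids assuming.

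The paper proceeds differently and this explains the exponent $4C$. Rather than proving $\chi|_P$ reducible, it proves the easier fact that $\chi|_Q$ is reducible: since $Q \not\leq \Ker(\chi)$, some constituent of $\chi|_Q$ is nontrivial, and the $[L,L]$-orbits on $\Irr(Q)\smallsetminus\{1_Q\}$ (identified with nonzero vectors in $\F_q^{n-2}$) all have length $\geq 2$, so $\chi|_Q$ has at least two distinct constituents. This gives $[(\chi\overline\chi)|_Q,1_Q]_Q \geq 2$, so some non-principal constituent $\theta$ of $\chi\overline\chi$ has $1_Q$ in $\theta|_Q$. For this particular $\theta$ one does get $V_\theta^Q \neq 0$, hence $\theta|_P$ is reducible, and now your Frobenius-reciprocity argument applies to $\theta$: $\theta\overline\theta$ contains $\alpha$ or $\beta$. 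Thus $(\chi\overline\chi)^2 \supseteq \alpha$ or $\beta$, whence $(\chi\overline\chi)^{2C} \supseteq \St$, and the final case split via Lemma~\ref{mc-r1} or $\chi=\overline\chi$ yields $\chi^{4C} \supseteq \St$. So the doubling from $2C$ to $4C$ is not slack; it comes from the extra passage through $\theta$, which is precisely what lets one sidestep the unproved reducibility of $\chi|_P$. Your remark that $4C$ ``appears to be non-tight'' is therefore unjustified by your argument as it stands.
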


\begin{proof}
Again we consider a point stabilizer $P=QL$ in the aforementioned rank $3$ permutation action of $G$ with character 
$\rho = 1_G+\a+\b$. 
Note that $Q$ is elementary abelian, $[L,L] \cong \O^\e_{n-2}(q)$, and we can identify $\Irr(Q)$ with the natural module 
$\F_q^{n-2}$ for $[L,L]$.  In particular, any $[L,L]$-orbit on $\Irr(Q) \smallsetminus \{1_Q\}$ has length at least $2$. It is also 
clear that some irreducible constituent of $\c|_Q$ is non-principal, since $\Ker(\c) \leq \ZB(G)$ and $Q \not\leq \ZB(G)$. It follows
that $\c|_Q$ is reducible, and so 
$$2 \leq [\c|_Q,\c|_Q]_Q = [(\c\overline\c)|_Q,1_Q]_Q.$$
Since $[\c\overline\c,1_G]_G = 1$, at least one non-principal irreducible constituent $\theta$ of $\c\overline\c$ contains $1_Q$ on restriction to $Q$. But $P$ normalizes $Q$, so the latter implies that $\theta|_P$ is reducible. Thus 
\eqref{eq:mc1} holds for $\theta$ instead of $\c$. Arguing as in the proof of Lemma \ref{mc-r1}, we obtain that 
$\theta\overline\theta$ contains either $\a$ or $\b$, whence $(\c\overline\c)^2$ contains either $\a$ or $\b$.
It follows that $(\c\overline\c)^{2C}$ contains $\St$, and we are done if $\c = \overline\c$.
Applying Lemma \ref{mc-r1}, we also have that $\c^{4C}$ contains $\St$ in the case $n \not\equiv 2 (\bmod\ 4)$. 
\end{proof}

\begin{lem}\label{mc-r4}
Let $G = \O(V) = \O^\e_{n}(q)$ with $n \geq 10$ and $n \equiv 2 (\bmod\ 4)$. Suppose $C \in \N$ is such that each of $\a^C$, 
$\b^C$, and $\g_i^C$ contains $\St$. Then for any $\chi \in \Irr(G)$ with $\c \neq \overline\c$, $\c^{4C}$ contains $\St$.
\end{lem}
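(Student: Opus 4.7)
The plan is to adapt the strategy of Lemma~\ref{mc-r3} to the present setting, where the new obstacle is that with $n\equiv 2\pmod 4$ not every semisimple element of $G$ is real, so Lemma~\ref{mc-r1}(i) is unavailable to convert $(\c\bar\c)^k$-statements into $\c^{2k}$-statements. The hypothesis on the $\g_i^C$'s is what will compensate: I will locate a constituent of $\c^2$ inside $\{\a,\b\}\cup\{\g_i\}$ and then invoke the hypothesis.

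First I would reprise the opening $Q$-analysis of Lemma~\ref{mc-r3}: $\c|_Q$ is reducible, and every non-principal $[L,L]$-orbit on $\Irr(Q)$ has length at least $2$. The crucial additional observation is that $L$ contains an element (from its $\GL_1$-factor) acting on $Q\cong\F_q^{n-2}$ as negation, so every $L$-orbit on $\Irr(Q)$ is closed under the conjugation $\lambda\mapsto\bar\lambda$; since $\c|_Q$ is $L$-invariant, $[\c|_Q,\lambda]_Q=[\c|_Q,\bar\lambda]_Q$ for every $\lambda\in\Irr(Q)$, and consequently $\c|_Q$ is self-conjugate as a character of $Q$. Hence
$$[\c^2,\Ind^G_Q(1_Q)]_G \;=\; [\c|_Q,\bar\c|_Q]_Q \;=\; [\c|_Q,\c|_Q]_Q \;\ge\; 2,$$
and since $[\c^2,1_G]_G=[\c,\bar\c]_G=0$ the character $\c^2$ has non-principal irreducible constituents carrying a non-zero $Q$-fixed vector.

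The central step is to sharpen this to
$$[\c|_{[P,P]},\bar\c|_{[P,P]}]_{[P,P]} \;=\; [\c^2,\Ind^G_{[P,P]}(1_{[P,P]})]_G \;\ge\; 1.$$
Combined with $[\c^2,1_G]=0$ and the fact that the irreducible constituents of $\Ind^G_{[P,P]}(1_{[P,P]})$ lie in $\{1_G,\a,\b\}\cup\{\g_i\}$, this forces $\c^2\supseteq\eta$ for some $\eta\in\{\a,\b\}\cup\{\g_i\}$. To establish the inequality I would apply Clifford theory to $Q\trianglelefteq[P,P]$: for each $[L,L]$-orbit $\mathcal{O}\subseteq\supp(\c|_Q)$, pick $\lambda\in\mathcal{O}$ and use that $\mathcal{O}$ contains $\bar\lambda$ (together with $n-2\ge 8$) to find $g\in[L,L]=\O^\e_{n-2}(q)$ with $g\cdot\lambda=\bar\lambda$. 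One then checks that the $\lambda$-isotypic component $\Psi$ of $\c|_{Q\cdot[L,L]_\lambda}$ shares an irreducible constituent of $[L,L]_\lambda$ with its $g$-twisted complex conjugate $g^{-1}\cdot\overline{\Psi}$, which matches corresponding Clifford extensions of $\c|_{[P,P]}$ and $\bar\c|_{[P,P]}$ above $\mathcal{O}$.

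Granted $\c^2\supseteq\eta$, the hypothesis yields $\c^{2C}=(\c^2)^C\supseteq\eta^C\supseteq\St$; combined with the standard fact $\St^2\supseteq\St$ for simple groups of Lie type, we get $\c^{4C}=\c^{2C}\cdot\c^{2C}\supseteq\St^2\supseteq\St$, completing the proof. The main obstacle will be the Clifford matching $[\Psi,g^{-1}\cdot\overline{\Psi}]_{[L,L]_\lambda}\ge 1$ above each orbit: the compatibility between the upstairs extension $\Psi$ and its $g$-twisted complex conjugate is delicate and is precisely where the hypothesis $n\equiv 2\pmod 4$ enters substantively, in contrast with Lemma~\ref{mc-r3} where Lemma~\ref{mc-r1}(i) covered the reduction. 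If direct Clifford matching fails in a residual edge case, a fallback is to produce the constituent $\eta\in\{\a,\b,\g_i\}$ inside $\c\bar\c$ via the reducibility of $\c|_{[P,P]}$ and then bridge from $(\c\bar\c)^C\supseteq\St$ to $\c^{4C}\supseteq\St$ by a pairing argument over semisimple conjugacy classes; this fallback is the ultimate source of the factor $4C$ rather than $2C$ in the final exponent.
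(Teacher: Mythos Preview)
Your central step---showing $[\chi|_{[P,P]},\bar\chi|_{[P,P]}]_{[P,P]}\ge 1$ via Clifford matching over a nontrivial $\lambda\in\Irr(Q)$---is a genuine gap, and the paper does \emph{not} proceed this way. You correctly identify the obstacle yourself: even with $g\in[L,L]$ satisfying $g\cdot\lambda=\bar\lambda$, there is no reason the $\lambda$-homogeneous component $\Psi$ of $\chi|_{H_\lambda}$ should share an irreducible constituent with $g^{-1}\cdot\bar\Psi$. The assignment $\Psi\mapsto g^{-1}\cdot\bar\Psi$ is an involution on characters of $H_\lambda$ lying over $\lambda$, and such an involution can permute the irreducible constituents freely; nothing in the setup pins this down. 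Your fallback is also circular: obtaining $(\chi\bar\chi)^{2C}\supseteq\St$ and then ``bridging'' to $\chi^{4C}\supseteq\St$ over semisimple classes is exactly what Lemma~\ref{mc-r1}(i) would do, and you began by (correctly) noting that Lemma~\ref{mc-r1}(i) is unavailable when $n\equiv 2\pmod 4$.

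The paper avoids the Clifford matching entirely by inserting an intermediate character. First one uses your observation that $[L,L]$-orbits on $\Irr(Q)\smallsetminus\{1_Q\}$ are closed under $\lambda\mapsto\bar\lambda$ only to obtain $[\chi^2|_Q,1_Q]_Q\ge 1$; since $[\chi^2,1_G]_G=0$, some non-principal constituent $\theta$ of $\chi^2$ has a nonzero $Q$-fixed space $U^Q$. The point is that now one works over $\lambda=1_Q$, where the stabilizer is all of $[L,L]\cong\O^\e_{n-2}(q)$. Because $4\mid(n-2)$ and $n-2\ge 8$, every irreducible character of $\O^\e_{n-2}(q)$ is real-valued (with a short workaround through $\SO^+_{n-2}(q)$ when $\e=+$ and $q\equiv 3\pmod 4$), so the $[L,L]$-module $U^Q$ contains a nonzero self-dual irreducible submodule $W$. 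Then $W\otimes W\cong W\otimes W^*$ contains the trivial $[P,P]$-module, giving $[\theta^2,\Ind^G_{[P,P]}(1_{[P,P]})]_G\ge 1$ and hence $\theta^2\supseteq\eta$ for some $\eta\in\{\alpha,\beta,\gamma_i\}$. Thus $\chi^4\supseteq\theta^2\supseteq\eta$ and $\chi^{4C}\supseteq\eta^C\supseteq\St$. The exponent $4C$ arises from the two-step chain $\chi^2\supseteq\theta$, $\theta^2\supseteq\eta$, not from any $\St^2$ trick; this is the mechanism you were missing.
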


\begin{proof}
(i) As noted in the proof of Lemma \ref{mc-r3}, $Q$ is elementary abelian, $[L,L] \cong \O^\e_{n-2}(q)$, and we can identify 
$\Irr(Q)$ with the natural module $\F_q^{n-2}$ for $[L,L]$. Since $n-2 \geq 8$, it is straightforward to check that any 
$[L,L]$-orbit on nonzero vectors of $\F_q^{n-2}$ contains a vector $v$ and also $-v$. 
Thus, any $[L,L]$-orbit on $\Irr(Q) \smallsetminus \{1_Q\}$ contains a characters $\l$ and also its complex conjugate 
$\overline\l$. As noted in the proof of Lemma \ref{mc-r3}, $Q \not\leq \Ker(\c)$. Thus we may assume that $\c|_Q$ contains 
$\l$ and also $\overline\l$. It follows that 
$1 \leq [\c^2|_Q,1_Q]_Q$.  Since $[\c^2,1_G]_G = [\c,\overline\c]_G = 0$, at least one non-principal irreducible constituent 
$\theta$ of $\c^2$ contains $1_Q$ on restriction to $Q$. 

In particular, $\theta|_P$ is reducible, since $P$ normalizes
$Q$, and \eqref{eq:mc1} holds for $\theta$ instead of $\c$, and so the 
arguments in the proof of Lemma \ref{mc-r2} shows that $\theta\overline\theta$ contains $\a$ or 
$\b$. If, moreover, $\theta = \overline\theta$, then we conclude that $\theta^2$ contains $\a$ or $\b$.

\smallskip
(ii) Now consider the case $\theta \neq \overline\theta$, and 
let $\theta$ be afforded by a $\C G$-module $U$. As shown in (i), the $Q$-fixed point subspace $U^Q$ on $U$ is 
nonzero, and $L$ acts on $U^Q$. Recall that $4|(n-2)$ and $n-2 \geq 8$. Now, if $(\e,q) \neq (+, \equiv 3(\bmod\ 4))$, then all irreducible characters of $[L,L] \cong \O^e_{n-2}(q)$ are real-valued, and so the $[L,L]$-module $U^Q$ contains an irreducible submodule $W \cong W^*$. 

Consider the case $(\e,q) = (+,\equiv 3(\bmod\ 4))$ and let $P = \Stab_G(\langle u \rangle_{\F_q})$ for 
a singular vector $0 \neq u \in V$. We can consider $P$ inside $\tilde P:=\Stab_{\SO(V)}(\langle u \rangle_{\F_q})=Q\tilde L$, 
and find another singular vector $u' \in V$ such that $V = V_1 \oplus V_2$, with $V_1 = \langle u,u' \rangle_{\F_q}$,
$V_2 = V_1^{\perp}$, and $[L,L] = \O(V_2)$. Since $q \equiv 3 (\bmod\ 4)$, $t:=-1_{V_1} \in \SO(V_1) \smallsetminus \O(V_1)$.
Choosing some $t' \in \SO(V_2) \smallsetminus \O(V_2)$, we see that $tt' \in \tilde L \cap \O(V) = L$, and 
$L_1 := \langle [L,L],tt' \rangle \cong \SO^+_{n-2}(q)$. By \cite{Gow}, all irreducible characters of $L_1$ are real-valued, and so the $L_1$-module $U^Q$ contains an irreducible submodule $W \cong W^*$. 

We have shown that the $[L,L]$-module $U^Q$ contains a nonzero submodule $W \cong W^*$. We can also inflate
$W$ to a nonzero self-dual module over $[P,P] = Q[L,L]$. It follows that $(U \otimes_{\C} U)|_{[P,P]}$ contains 
$W \otimes_{\C} W^*$, which certainly contains the trivial submodule. Thus, $\theta^2|_{[P,P]}$ contains the principal
character $1_{[P,P]}$, and so 
\begin{equation}\label{eq:mc2}
  1 \leq [\theta^2,\Ind^G_{[P,P]}(1_{[P,P]})]_G.
\end{equation}   
Recall we are assuming that $0 = [\theta,\overline\theta]_G = [\theta^2,1_G]_G$. Hence \eqref{eq:mc2} implies that 
$\theta^2$ contains at least one of $\a$, $\b$, or $\g_i$.

\smallskip
(iii) We have shown that, in all cases, $\theta^2$ contains at least one of $\a$, $\b$, or $\g_i$. As $\chi^2$ contains $\theta$,
we see that $\c^4$ contains at least one of $\a$, $\b$, or $\g_i$, and so $\c^{4C}$ contains $\St^2$.
\end{proof}

\subsection{Classical groups in characteristic $2$}
In this subsection we study certain characters of $\tilde G = \Sp(V) = \Sp_{2n}(q)$ and $G = \O(V)=\O^\e_{2n}(q)$, 
where $n \geq 5$ and $2|q$. These results will be used subsequently and are also of independent interest.

First we endow $V$ with a non-degenerate alternating form $(\cdot,\cdot)$, and work with its isometry group 
$\tilde G = \Sp(V)$. We will consider the following irreducible characters of $\tilde G$:

$\bullet$ the $q/2+1$ {\it linear-Weil} characters: 
$\rho^1_n$ of degree $(q^n+1)(q^n-q)/2(q-1)$, $\rho^2_n$ of 
degree $(q^n-1)(q^n+q)/2(q-1)$, and $\tau^i_n$ of degree $(q^{2n}-1)/(q-1)$, $1 \leq i \leq (q-2)/2$, and 

$\bullet$ the $q/2+2$ {\it unitary-Weil} characters:
$\a_n$ of degree $(q^n-1)(q^n-q)/2(q+1)$, $\b_n$ of 
degree $(q^n+1)(q^n+q)/2(q+1)$, and $\zeta^i_n$ of degree $(q^{2n}-1)/(q+1)$, $1 \leq i \leq q/2$;\\ 
see \cite[Table 1]{GT}. Then 
\begin{equation}\label{eq:dec11}
  \rho:=1_{\tilde G}+\rho^1_n+\rho^2_n
\end{equation}  
is the rank $3$ permutation character of $\tilde G$ acting on the set of 
$1$-spaces of $V$.
The following statement is well known, see e.g. formula (1) of \cite{FST}:

\begin{lem}\label{quad1}
For $\e = \pm$, the character $\pi^\e$ of the permutation action of $\tilde G$ on quadratic forms of type $\e$ associated to
$(\cdot,\cdot)$ is given as follows:
$$ \pi^+ = 1_{\tilde G} + \rho^2_n +  \sum^{(q-2)/2}_{i=1}\tau^i_n,~~~
  \pi^- = 1_{\tilde G} + \rho^1_n +  \sum^{(q-2)/2}_{i=1}\tau^i_n.$$ 
\end{lem}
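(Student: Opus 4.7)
The plan is to compute $[\pi^\e,\chi]_{\tilde G}$ for each $\chi\in\Irr(\tilde G)$ via Frobenius reciprocity plus Mackey, and then close with a degree match. Let $H=\Stab_{\tilde G}(Q^\e)$ for a fixed quadratic form $Q^\e$ of type $\e$ associated with $(\cdot,\cdot)$, so that $\pi^\e=\Ind_H^{\tilde G}(1_H)$ and $[\pi^\e,\chi]_{\tilde G}=\dim(V_\chi)^H$ for every $\chi$.

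Let $P=\Stab_{\tilde G}(\langle v\rangle)$ for a nonzero $v\in V$. Then $\Ind_P^{\tilde G}(1_P)=\rho=1_{\tilde G}+\rho^1_n+\rho^2_n$, and the $\tau^i_n$ arise as the irreducible constituents of $\Ind_P^{\tilde G}(\xi)$ as $\xi$ ranges over the nontrivial characters of the natural quotient $P\twoheadrightarrow\F_q^\times$ (the scaling action of $P$ on $\langle v\rangle$), as recorded in \cite{GT}. Applying Mackey's formula to the double-coset space $H\backslash\tilde G/P$, which parametrizes the two $H$-orbits on the $1$-spaces of $V$ (singular versus non-singular with respect to $Q^\e$), reduces the computation to restricting $\xi$ to the $H$-stabilizer of each type of line.

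On the singular orbit, this $H$-stabilizer surjects onto $\F_q^\times$ via the scaling action (exhibited by a hyperbolic pair containing $v$, as in a Borel subgroup of $H$), so $\xi$ restricts trivially only when $\xi=1$. On the non-singular orbit, a stabilizer element scaling $v$ by $c$ must satisfy $c^2Q^\e(v)=Q^\e(v)$ with $Q^\e(v)\neq 0$, forcing $c=1$ in characteristic $2$; hence every $\xi$ restricts trivially. Consequently
$$[\pi^\e,\Ind_P^{\tilde G}(\xi)]=\begin{cases}2, & \xi=1,\\ 1, & \xi\neq 1.\end{cases}$$
The case $\xi=1$, combined with the transitivity identity $[\pi^\e,1_{\tilde G}]=1$, forces $[\pi^\e,\rho^1_n]+[\pi^\e,\rho^2_n]=1$, so exactly one of $\rho^1_n,\rho^2_n$ appears with multiplicity $1$. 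The $q-2$ cases $\xi\neq 1$, distributed among the $(q-2)/2$ characters $\tau^i_n$ (each accounted for by a pair $\{\xi,\xi^{-1}\}$), similarly give $[\pi^\e,\tau^i_n]=1$ for every $i$.

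Finally, a direct computation with the stated degrees yields $1+\deg(\rho^2_n)+\tfrac{q-2}{2}\deg(\tau^1_n)=q^n(q^n+1)/2=[\tilde G:H]$ when $\e=+$, and the analogous identity with $\rho^1_n$ and $q^n(q^n-1)/2$ when $\e=-$. Matching $\deg(\pi^\e)=q^n(q^n+\e)/2$ forces $\rho^2_n$ for $\e=+$ and $\rho^1_n$ for $\e=-$, and simultaneously rules out any contribution from the unitary-Weil characters $\a_n,\b_n,\zeta^i_n$. The chief obstacle is the identification of the induced characters $\Ind_P^{\tilde G}(\xi)$ (for nontrivial $\xi$) with the $\tau^i_n$; this is part of the linear-Weil classification from \cite{GT} and is the content of the decomposition appearing in formula~(1) of \cite{FST}.
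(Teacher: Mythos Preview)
The paper does not give a proof of this lemma; it records it as well known and cites formula~(1) of \cite{FST}. Your argument therefore supplies what the paper omits, and the Mackey computation is correct: $H=\mathrm{GO}^\e_{2n}(q)$ has exactly two orbits on $1$-spaces, the $H$-stabilizer of a non-singular line meets the scaling torus trivially (since $c^2=1$ forces $c=1$ in characteristic~$2$), and the $H$-stabilizer of a singular line surjects onto it. This correctly yields $[\pi^\e,\rho]=2$ and $[\pi^\e,\Ind_P^{\tilde G}(\xi)]=1$ for each nontrivial linear character~$\xi$.

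Two points need tightening. First, the degree match does not by itself finish the argument. Having fixed $[\pi^\e,1_{\tilde G}]=1$, $[\pi^\e,\rho^1_n]+[\pi^\e,\rho^2_n]=1$, and $[\pi^\e,\tau^i_n]=1$, the possibility remains that (for $\e=+$, say) one has $\rho^1_n$ rather than $\rho^2_n$, together with a residual character of degree $\rho^2_n(1)-\rho^1_n(1)=q^n$ supported on irreducibles outside $\{1_{\tilde G},\rho^1_n,\rho^2_n,\tau^i_n\}$. This is excluded because every nontrivial irreducible character of $\Sp_{2n}(q)$ with $2\mid q$ has degree at least $\a_n(1)=(q^n-1)(q^n-q)/2(q+1)$, which exceeds $q^n$ once $n\ge 4$ or $q\ge 4$; the single exception $(n,q)=(3,2)$ is handled by inspecting the character table of $\Sp_6(2)$. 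Second, your closing sentence is circular: formula~(1) of \cite{FST} \emph{is} the decomposition being proved, so it cannot be invoked to identify $\Ind_P^{\tilde G}(\xi)$ with $\tau^i_n$. That identification is better justified directly: since $2\mid q$, every nontrivial $\xi\in\Irr(\F_q^\times)$ satisfies $\xi\ne\xi^{-1}$, so Harish-Chandra induction of $\xi\boxtimes 1_{\Sp_{2n-2}(q)}$ is irreducible, and the Weyl reflection exchanging a hyperbolic pair shows $\Ind_P^{\tilde G}(\xi)\cong\Ind_P^{\tilde G}(\xi^{-1})$; comparing degrees with the list in \cite{GT} then forces these $(q-2)/2$ irreducibles to be the~$\tau^i_n$.
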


Given any $g \in \GL(V)$, let 
$$d(x,g):= \dim_{\overline{\F}_q}\Ker(g-x \cdot 1_{V \otimes_{\F_q}}\overline{\F}_q)$$
for any $x \in \overline{\F}_q^\times$, and define the {\it support} of $g$ to be
\[
\supp(g) := \dim(V)-\max_{x \in \overline{\F}_q^\times}d(x,g).
\]
Set
$$d(g):= \dim(V)-\supp(g).$$

\begin{prop}\label{rat-sp2}
Let $\tilde G = \Sp_{2n}(q)$ with $n \geq 3$ and $2|q$, and let $g \in \tilde G$ have support $s=\supp(g)$. If 
$\chi \in \{\rho^1_n,\rho^2_n\}$, then 
$$\frac{|\c(g)|}{\c(1)} \leq \frac{1}{q^{s/3}}.$$
\end{prop}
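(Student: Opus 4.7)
The plan is to express the two linear-Weil characters in terms of concrete permutation characters and then bound each ingredient by a geometric count of fixed sub-objects. Combining \eqref{eq:dec11} with Lemma \ref{quad1} one obtains
\[
2\rho^1_n = (\rho - 1_{\tilde G}) - (\pi^+ - \pi^-),\qquad 2\rho^2_n = (\rho - 1_{\tilde G}) + (\pi^+ - \pi^-),
\]
and, since $\pi^+$ and $\pi^-$ are permutation characters (hence non-negative),
\[
|\rho^i_n(g)| \;\le\; \tfrac{1}{2}\bigl(|\rho(g) - 1| + (\pi^+ + \pi^-)(g)\bigr)\qquad (i=1,2).
\]
Thus it suffices to bound $|\rho(g)-1|$ and $(\pi^+ + \pi^-)(g)$ in terms of $s = \supp(g)$.

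The first bound is elementary. Since $\rho(g)$ counts the $1$-spaces of $V$ fixed by $g$,
\[
\rho(g) = \sum_{\lambda \in \F_q^\times}\frac{q^{d_\lambda}-1}{q-1},\qquad d_\lambda := \dim_{\F_q}\Ker(g - \lambda\cdot 1_V).
\]
By the definition of support, $d_\lambda \le d(\lambda,g) \le 2n-s$; furthermore $\sum_\lambda d_\lambda \le 2n$ and at most $q-1$ of the $d_\lambda$ are nonzero. A short manipulation then yields $\rho(g) \le q^{2n-s}$ (with noticeably sharper estimates when only one $d_\lambda$ is large).

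For the quadratic-form count I would exploit the feature, specific to characteristic $2$, that the set of quadratic forms on $V$ with polar form $(\cdot,\cdot)$ is an affine $\F_q$-space of dimension $2n$: its associated vector space is the set of additive maps $f:V\to\F_q$ satisfying $f(\lambda v) = \lambda^2 f(v)$, and via the Frobenius twist $f\leftrightarrow f^{1/2}$ it is $\tilde G$-equivariantly identified with $V^*$ carrying its natural action. Consequently $\tilde G$ acts on this affine space by affine transformations, and the fixed set of $g$ is either empty or an affine subspace of dimension at most $\dim\Ker(g-1) \le 2n-s$. In either case $(\pi^+ + \pi^-)(g) \le q^{2n-s}$.

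Combining the two bounds gives $|\rho^i_n(g)| \le q^{2n-s}$ (up to a small constant), while $\rho^i_n(1) \ge q^{2n-1}/3$ for $q\ge 2$, $n\ge 3$, and hence $|\rho^i_n(g)|/\rho^i_n(1)$ is of order $q^{-(s-1)}$. This is already $\le q^{-s/3}$ once $s$ is sufficiently large. The main obstacle will be the small-support cases, where the crude bound $q^{1-s}$ does not dominate $q^{-s/3}$ for all $q$: here $g$ is forced into a very restricted union of classes (symplectic transvections for $s=1$, and a handful of near-transvection classes for $s=2$, and so on for small $s$ when $q=2$), and I would verify the required inequality by hand, either by sharpening the counts of fixed $1$-spaces and fixed quadratic forms using the specific Jordan structure of $g$, or by invoking the explicit values of $\rho^i_n$ on these classes available in \cite{GT}.
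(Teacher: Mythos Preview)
Your approach is essentially the same as the paper's: combine \eqref{eq:dec11} and Lemma~\ref{quad1} to bound $|\rho^i_n(g)|$ via $|\rho(g)-1|$ and $(\pi^++\pi^-)(g)$, each controlled by $q^{2n-s}$, then compare with $\chi(1)$. Two minor sharpenings in the paper let the case analysis collapse: first, $\pi^++\pi^-$ is \emph{exactly} the permutation character of $\tilde G$ on $V$, so $(\pi^++\pi^-)(g)=q^{d(1,g)}$ with no affine-geometry argument needed; second, the degree bound $\chi(1)>q^{2n-4/3}$ gives $|\chi(g)/\chi(1)|<q^{4/3-s}\le q^{-s/3}$ already for all $s\ge 2$, so only $s=1$ (transvections, handled via \cite[Corollary 7.8]{GT}) requires separate treatment rather than the broader small-$s$ analysis you anticipate.
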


\begin{proof}
The statement is obvious if $s=0$. Suppose $s=1$. It is easy to see that in this case $g$ is a transvection, and so 
$$\rho^1_n(g) = \rho^2_n(g) = \frac{q^{2n-1}-q}{2(q-1)}$$
by \cite[Corollary 7.8]{GT}, and the statement follows.

From now on we may assume $s \geq 2$. Observe that
$d:=\max_{x \in \F_q^\times}d(x,g) \leq d(g) = 2n-s$. Hence, 
$$0 \leq \rho(g) = \sum_{x \in \F_q^\times}\frac{q^{d(x)}-1}{q-1} \leq q^d-1,$$
and so \eqref{eq:dec11} implies 
$$|\rho^1_n(g)+\rho^2_n(g)| \leq q^d-1.$$
On the other hand, since $\pi^\pm(g) \geq 0$ and $\pi^++\pi^-$ is just the permutation character of $\tilde G$ acting on
$V$, Lemma \ref{quad1} implies that 
$$|\rho^1_n(g)-\rho^2_n(g)| = |\pi^+(g)-\pi^-(g)| \leq \pi^+(g)+\pi^-(g) = q^{d(1,g)} \leq q^d.$$ 
It follows for any $i \in \{1,2\}$ that
$$|\rho^i_n(g)| \leq \bigl(|\rho^1_n(g)+\rho^2_n(g)|+|\rho^1_n(g)+\rho^2_n(g)|\bigr)/2 < q^d \leq q^{2n-s}.$$
Since $n \geq 3$, we can also check that
$$\rho^i_n(1) \geq \frac{(q^n+1)(q^n-q)}{2(q-1)} > q^{2n-4/3}.$$
Thus $|\c(g)|/\c(1)| < q^{4/3-s} \leq q^{-s/3}$, as stated.
\end{proof}

Next we endow $V = \F_q^{2n}$ with a non-degenerate quadratic form $\QF$ of type $\e = \pm$ associated to 
the alternating form $(\cdot,\cdot)$. Choose a Witt basis $(e_1,\ldots,e_n,f_1, \ldots, f_n)$ for $(\cdot,\cdot)$, such that
$\QF(e_1)=\QF(f_1)=0$. We may assume that $P = \Stab_G(\langle e_1 \rangle_{\F_q}) = QL$, where 
$Q$ is elementary abelian of order $q^{2n-2}$, $L \cong \O^\e_{2n-2}(q) \times C_{q-1}$, and 
$$[P,P] = \Stab_G(e_1)=Q \rtimes [L,L]$$ 
has index $(q^n-\e)(q^{n-1}+\e)$ in $G$. Also consider $H := \Stab_G(e_1+f_1)$.

According to \cite[Theorem 1.3]{N}, $G$ has $q+1$ 
non-principal complex irreducible characters of degree at most $(q^n-\e)(q^{n-1}+\e)$, namely, $\a$ of degree
$(q^n-\e)(q^{n-1}+\e q)/(q^2-1)$, $\b$ of degree $(q^{2n}-q^2)/(q^2-1)$, 
$\g_i$ of degree $(q^n-\e)(q^{n-1}+\e)/(q-1)$, $1 \leq i \leq (q-2)/2$, and $\d_j$ of degree
$(q^n-\e)(q^{n-1}-\e)/(q+1)$, $1 \leq j \leq q/2$.

\begin{prop}\label{dec-so2}
Let $G = \O^\e_{2n}(q)$ with $n \geq 5$ and $2|q$, and consider $P = \Stab_G(e_1)$ and $H = \Stab_G(e_1+f_1)$ as
above. Then the following statements hold.
\begin{enumerate}[\rm(i)]
\item $\Ind^G_P(1_P) = 1_G + \a + \b$.
\item $\Ind^G_{[P,P]}(1_{[P,P]}) = 1_G +\a+\b + 2\sum^{(q-2)/2}_{i=1}\g_i$.
\item $\Ind^G_H(1_H) = 1_G +\b + \sum^{(q-2)/2}_{i=1}\g_i+\sum^{q/2}_{j=1}\d_j$.
\end{enumerate}

\end{prop}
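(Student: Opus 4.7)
The plan is to prove each part in sequence, using increasingly involved character-theoretic techniques.

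Part (i) is immediate from the setup in \S\ref{red}, where $\alpha$ and $\beta$ are \emph{defined} as the non-principal constituents of the rank-3 permutation character $\Ind^G_P(1_P)$.

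For part (ii), the plan is to apply transitivity of induction together with standard Harish-Chandra theory. Since $[P : [P,P]] = q - 1$ (matching the size of the $\F_q^\times$-action of $P$ on the line $\langle e_1 \rangle$) and $P/[P,P] \cong L/[L,L] \cong C_{q-1}$,
\begin{equation*}
\Ind^G_{[P,P]}(1_{[P,P]}) = \sum_{\lambda \in \widehat{C_{q-1}}} \Ind^G_P(\tilde\lambda),
\end{equation*}
where $\tilde\lambda$ denotes $\lambda$ inflated to $P$. The trivial $\lambda$ contributes $1_G + \alpha + \beta$ by (i). For each non-trivial $\lambda$, the relative Weyl group $W_G(L) = N_G(L)/L$ is cyclic of order $2$, acting on $L/[L,L] \cong C_{q-1}$ by inversion $\lambda \mapsto \lambda^{-1}$. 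Since $q$ is a power of $2$, $q-1$ is odd, so no non-trivial $\lambda$ is self-inverse, and its stabilizer in $W_G(L)$ is trivial. Hence $\Ind^G_P(\tilde\lambda)$ is irreducible, coincides with $\Ind^G_P(\tilde\lambda^{-1})$, and by matching degrees with $[G:P] = \g_i(1)$ may be identified with one of the $\g_i$. Each of the $(q-2)/2$ pairs $\{\lambda, \lambda^{-1}\}$ gives a distinct $\g_i$ appearing with multiplicity $2$ in the sum, yielding (ii).

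For part (iii), the plan is to determine $\Ind^G_H(1_H) = \sum_\chi m_\chi \chi$ via orbit counts and integrality. By Frobenius--Mackey, $[\Ind^G_H(1_H), \Ind^G_K(1_K)]_G$ equals the number of $H$-orbits on $G/K$. Classifying $H$-orbits by Witt's theorem in terms of the invariant $c = (v, w) \in \F_q$ with $w := e_1 + f_1$ yields: $2$ orbits on singular $1$-spaces (distinguished by whether $\langle v\rangle \perp w$), $q$ orbits on nonzero singular vectors (one per $c \in \F_q$), and $q+1$ orbits on $G/H$ itself (the diagonal $\{w\}$ together with one orbit per $c \in \F_q$; note $v' = \mu w$ with $\QF(v') = 1$ forces $\mu = 1$ in characteristic $2$). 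Combined with (i), (ii), and transitivity of $G$ on $G/H$, this gives
\begin{equation*}
m_{1_G} = 1, \quad m_\alpha + m_\beta = 1, \quad \sum_i m_{\g_i} = \tfrac{q-2}{2}, \quad \sum_\chi m_\chi^2 = q+1.
\end{equation*}

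To conclude, each constituent has degree at most $[G:H] = q^{n-1}(q^n-\e)$, so by Nguyen's classification \cite{N} lies in $\{1_G, \alpha, \beta, \g_i, \delta_j\}$. Writing $\Ind^G_H(1_H) = 1_G + a\alpha + b\beta + \sum_i c_i\g_i + \sum_j d_j\delta_j$, the constraints give $\{a,b\} = \{0,1\}$ and $\sum c_i^2 + \sum d_j^2 = q-1$. The degree equation together with $\alpha(1) \neq \beta(1)$, the common degree of the $\delta_j$, and an integrality check on $(\beta(1) - \alpha(1))/\delta_j(1)$ forces $b = 1$, $a = 0$, and $\sum d_j = q/2$. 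Cauchy-Schwarz then gives $\sum c_i^2 \geq (q-2)/2$ and $\sum d_j^2 \geq (\sum d_j)^2/N \geq q/2$ (since the number $N$ of nonzero $d_j$'s is at most $q/2$); both must be equalities to match $\sum c_i^2 + \sum d_j^2 = q-1$, forcing $c_i = 1$ and $d_j = 1$ for all $i, j$. The main obstacle is the $\e = -$ case, where $[G:H] = q^{n-1}(q^n+1)$ marginally exceeds Nguyen's threshold $(q^n+1)(q^{n-1}-1)$, requiring an additional argument (e.g., a degree bound specific to characters with $\Sp_{2n-2}(q)$-fixed vectors) to confirm that no constituents lie outside the explicit list.
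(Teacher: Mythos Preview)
Your proposal is correct and follows the same overall architecture as the paper's proof, but with different tactical choices at several points.

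For (ii), both you and the paper decompose $\Ind^P_{[P,P]}(1)$ into linear characters of $P/[P,P] \cong C_{q-1}$ and induce each to $G$. You invoke Harish-Chandra theory (the relative Weyl group $C_2$ acts on $C_{q-1}$ by inversion, and $q-1$ odd forces trivial stabilisers), whereas the paper appeals to Mackey's formula for irreducibility and then counts $[P,P]$-orbits on nonzero singular vectors ($2q-1$ of them) to confirm that the $(q-2)/2$ irreducible summands are pairwise distinct. Your phrasing is slightly slicker; the content is the same.

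For (iii), your direct count of $H$-orbits on $G/H$ (giving $q+1$) is more economical than the paper's route, which instead computes $[\tau,\tau]_G = q^3+q^2-q$ for the permutation character $\tau$ on $V \smallsetminus \{0\}$ and backs out $[\Ind^G_H(1_H),\Ind^G_H(1_H)]_G$ from $\tau = \Ind^G_{[P,P]}(1) + (q-1)\Ind^G_H(1_H)$. Your Witt argument is legitimate here because $\Stab_O(w) \cong \Sp_{2n-2}(q)$ is perfect for $n \geq 5$, hence already lies in $\Omega$, so no $O$-orbit splits. To pin down $(a,b)=(0,1)$ the paper uses a Zsigmondy prime $\ell_\e$ dividing $q^n-\e$ (treating $q=2$ separately), which cleanly separates $\beta$ from the other listed characters; your integrality check on $(\beta(1)-\alpha(1))/\delta_j(1)$ achieves the same end but would benefit from one explicit line showing this ratio is non-integral and that $b=1$ is the branch yielding $\sum d_j = q/2 \in \Z_{\geq 0}$. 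Your Cauchy--Schwarz finish is equivalent to the paper's $\sum c_i^2 + \sum d_j^2 \geq \sum c_i + \sum d_j$.

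The obstacle you flag for $\e=-$ (that $[G:H]=q^{n-1}(q^n+1)$ exceeds the stated threshold $(q^n+1)(q^{n-1}-1)$) is genuine, but the paper's proof cites \cite[Theorem 1.3]{N} at exactly the same step without further comment; the gap is closed by the fact that Nguyen's classification also gives the next character degree, which exceeds $[G:H]$.
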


\begin{proof}
(i) is well known. Next, $P/[P,P] \cong C_{q-1}$ has $q-1$ irreducible characters: $1_P$ and $(q-2)/2$ pairs of 
$\{\nu_i,\overline\nu_i\}$, $1 \leq i \leq (q-2)/2$. An application of Mackey's formula shows that 
$\Ind^G_P(\nu_i) = \Ind^G_P(\overline\nu_i)$ is irreducible for all $i$. Now using (i) we can write
\begin{equation}\label{eq:dec1}
  \Ind^G_{[P,P]}(1_{[P,P]}) = \Ind^G_P\bigl( \Ind^P_{[P,P]}(1_{[P,P]}) \bigr) =
    1_G+\a+\b + 2\sum^{(q-2)/2}_{i=1}\Ind^G_P(\nu_i).
\end{equation}    
On the other hand, note that $[P,P]$ has exactly $2q-1$ orbits on the set of nonzero singular vectors in $V$:
$q-1$ orbits $\{xe_1\}$ with $x \in \F_q^\times$, one orbit $\{v \in e_1^\perp \smallsetminus \langle e_1 \rangle_{\F_q} \mid \QF(v)=0\}$, and $(q-1)$ orbits 
$\{yf_1 + v \mid v \in e_1^\perp, \QF(yf_1+v) =0\}$ with $y \in \F_q^\times$. Together with \eqref{eq:dec1}, this implies 
that all summands in the last decomposition in \eqref{eq:dec1} are pairwise distinct. 
Since $\g_i = (q^n-\e)(q^{n-1}+\e)/(q-1) = \Ind^G_P(\nu_{i'})$, renumbering the $\nu_i$ if necessary, we may assume 
that $\Ind^G_P(\nu_i)=\g_i$, and (ii) follows.

\smallskip
For (iii), first note that $P$ has two orbits on the set $\XC := \{ v \in V \mid \QF(v)=1\}$, namely, $\XC \cap e_1^\perp$ and 
$\XC \smallsetminus e_1^\perp$. Since $\Ind^G_H(1_H)$ is the character of the permutation action of $G$ on $\XC$, we get
\begin{equation}\label{eq:dec2}
  [\Ind^G_P(1_P),\Ind^G_H(1_H)]_G = 2.
\end{equation}
Next, $[P,P]$ has  $q$ orbits on $\XC$, namely, $\XC \cap e_1^\perp$, and $\{yf_1+w \in \XC \mid w \in e_1^\perp\}$ with
$y \in \F_q^\times$. Thus
\begin{equation}\label{eq:dec3}
  [\Ind^G_{[P,P]}(1_{[P,P]}),\Ind^G_H(1_H)]_G = q.
\end{equation}  
Combining the results of (i), (ii), with \eqref{eq:dec2}, \eqref{eq:dec3}, and again using \cite[Theorem 1.3]{N}, we can write
\begin{equation}\label{eq:dec4}
  \Ind^G_H(1_H) = 1_G + (a\a + b\b) +\sum^{(q-2)/2}_{i=1}c_i\g_i + \sum^{q/2}_{j=1}d_j\d_j,
\end{equation}
where $a,b,c_i,d_j \in \Z_{\geq 0}$, $a+b=1$, $\sum_ic_i = (q-2)/2$.

\smallskip
Let $\tau$ denote the character of the permutation action of $G$ on $V \smallsetminus \{0\}$, so that
$$\tau = \Ind^G_{[P,P]}(1_{[P,P]}) + (q-1)\Ind^G_H(1_H).$$
Note that $G$ has $q^3+q^2-q$ orbits on $(V \smallsetminus \{0\}) \times (V \smallsetminus \{0\})$, namely,
$q(q-1)$ orbits of $(u,xu)$, where $x \in \F_q^\times$ and $\QF(u) = y \in \F_q$, and $q^3$ orbits of 
$(u,v)$, where $u,v$ are linearly independent and $(\QF(u),(u,v),\QF(v)) = (x,y,z) \in \F_q^3$. In other words,
$[\tau,\tau]_G = q^3+q^2-q$. Using (ii) and \eqref{eq:dec3}, we deduce that
\begin{equation}\label{eq:dec5}
  [\Ind^G_H(1_H),\Ind^G_H(1_H)]_G = q+1.
\end{equation}  
In particular, if $q=2$ then $\Ind^G_H(1_H)$ is the sum of $3$ pairwise distinct irreducible characters. By checking the 
degrees of $\a,\b$ and $\d_1$, (iii) immediately follows from \eqref{eq:dec4}.

\smallskip
Now we may assume $q=2^e \geq 4$. Let $\ell_+ = \ell(2^{ne}-1)$ denote a primitive prime divisor of $2^{ne}-1$,
which exists by \cite{Zs}. Likewise, let $\ell_- = \ell(2^{2ne}-1)$ denote a primitive prime divisor of $2^{2ne}-1$.
Then note that $\ell_\e$ divides the degree of each of $\a$, $\g_i$, $d_j$, but neither $[G:H]-1$ nor $\b(1)$. 
Hence \eqref{eq:dec4} implies that $(a,b)=(0,1)$. Comparing the degrees in \eqref{eq:dec4}, we also see that 
$\sum_jd_j = q/2$. Now 
$$q+1 = [\Ind^G_H(1_H),\Ind^G_H(1_H)]_G = 2 + \sum^{(q-2)/2}_{i=1}c_i^2 + \sum^{q/2}_{j=1}d_j^2 
    \geq 2 +   \sum^{(q-2)/2}_{i=1}c_i + \sum^{q/2}_{j=1}d_j = 2 +\frac{q-2}{2}+\frac{q}{2},$$
yielding $c_i^2=c_i$, $d_j^2=d_j$, $c_i,d_j \in \{0,1\}$, and so $c_i = d_j = 1$, as desired.     
\end{proof}

In the next statement, we embed $G = \O(V)$ in $\tilde G := \Sp(V)$ (the isometry group of the form $(\cdot,\cdot)$ on $V$).

\begin{prop}\label{sp-so1}
Let $n \geq 5$, $2|q$, and $\e = \pm$. Then the characters $\rho^1_n$ and $\rho^2_n$ of $\Sp(V) \cong \Sp_{2n}(q)$
restrict to $G = \O(V) \cong \O^\e_{2n}(q)$ as follows:
$$\begin{array}{ll}(\rho^1_n)|_{\O^+_{2n}(q)} = \b + \sum^{q/2}_{j=1}\d_j, &
    (\rho^2_n)|_{\O^+_{2n}(q)} = 1+\a+\b + \sum^{(q-2)/2}_{i=1}\g_i,\\ 
     (\rho^1_n)|_{\O^-_{2n}(q)} = 1+\a+\b + \sum^{(q-2)/2}_{i=1}\g_i, & (\rho^2_n)|_{\O^-_{2n}(q)} = \b + \sum^{q/2}_{j=1}\d_j.
     \end{array}$$
\end{prop}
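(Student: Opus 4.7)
Plan. The strategy combines a Mackey decomposition of the rank-$3$ permutation character $\rho = 1_{\tilde G} + \rho^1_n + \rho^2_n$, Frobenius reciprocity with the ``quadratic-form'' permutation characters $\pi^\pm$ provided by Lemma \ref{quad1}, and a rationality/degree argument to uniquely allocate the remaining constituents.

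\textbf{Step 1.} Compute $\rho|_G$ by orbit analysis. The group $G = \O(V)$ has precisely two orbits on $1$-spaces of $V$: the singular ones (stabilizer $P$) and the non-singular ones. Because $q$ is even, every nonzero element of $\F_q$ is a square, so the $G$-stabilizer of a non-singular $1$-space coincides with the stabilizer of any representative vector, i.e.\ (a $G$-conjugate of) $H$. Hence $\rho|_G = \Ind^G_P(1_P) + \Ind^G_H(1_H)$, which by Proposition \ref{dec-so2}(i),(iii) equals $2\cdot 1_G + \a + 2\b + \sum_i \g_i + \sum_j \d_j$. Subtracting the trivial summand in $\rho$ yields
$$ (\rho^1_n + \rho^2_n)|_G \;=\; 1_G + \a + 2\b + \sum_{i=1}^{(q-2)/2}\g_i + \sum_{j=1}^{q/2}\d_j. \qquad (*) $$

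\textbf{Step 2.} Identify which restriction carries $1_G$. Since $\pi^\e$ is the permutation character of $\tilde G$ on type-$\e$ quadratic forms with stabilizer the full isometry group (in char 2, an index-$2$ extension of $G$ that we handle by a routine Clifford adjustment), Frobenius reciprocity and Lemma \ref{quad1} give
$$ [\rho^j_n|_G, 1_G]_G \;=\; [\rho^j_n, \pi^\e]_{\tilde G} \;=\; \begin{cases} 1, & (j,\e) \in \{(2,+),(1,-)\},\\ 0, & \text{otherwise}. \end{cases}$$
Thus the constituent $1_G$ in $(*)$ lies in $\rho^2_n|_G$ when $\e = +$ and in $\rho^1_n|_G$ when $\e = -$.

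\textbf{Step 3.} To separate the remaining constituents, I would exploit rationality. Both $\rho^1_n + \rho^2_n = \rho - 1_{\tilde G}$ (a permutation character) and $\rho^2_n - \rho^1_n = \pi^+ - \pi^-$ (by Lemma \ref{quad1}) take only rational values, hence each $\rho^j_n$ is $\Q$-valued and $\rho^j_n|_G$ is Galois-invariant. The Galois group of $\Q$ permutes the $\g_i = \Ind^G_P(\nu_i)$ among themselves (via its action on $\Irr(C_{q-1}) \simeq \Irr(P/[P,P])$) and the $\d_j$'s among themselves, so the multiplicities $[\rho^j_n|_G, \g_i]$ are constant on each Galois orbit of $\g_i$'s, and similarly for $\d_j$'s. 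Combined with $(*)$, which forces these multiplicities to sum to $1$ across $j = 1, 2$ for each fixed $i$ (resp.\ $j$), each Galois orbit of $\g_i$'s appears entirely in one of the two restrictions, and similarly for $\d_j$'s. The two degree identities $\rho^j_n(1) = (\rho^j_n|_G)(1)$ then pin down the unique admissible allocation, matching the statement.

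The main obstacle I expect will be the final allocation in Step 3: when the $\g_i$'s (or $\d_j$'s) split into several Galois orbits (which happens as soon as $q-1$ has many divisors), the degree equation must be analyzed carefully to show that all of $\sum_i \g_i$ accompanies $1_G$ and all of $\sum_j \d_j$ goes to the opposite summand. This reduces to a finite polynomial-identity check involving the explicit degrees of $\a, \b, \g, \d, \rho^j_n$ as functions of $q$ and $n$, which, from the small cases $q=2,4$, appears to admit only the claimed solution within the integer ranges $m_\a \in \{0,1\}, m_\b \in \{0,1,2\}, M_\g \in \{0,\ldots,(q-2)/2\}, M_\d \in \{0,\ldots,q/2\}$.
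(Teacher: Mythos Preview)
Your Steps 1 and 2 coincide with the paper's argument (the paper only uses the inequality $[\rho^j_n,\pi^\e]_{\tilde G}\ge 1 \Rightarrow [\rho^j_n|_G,1_G]_G\ge 1$, then reads off equality from $(*)$; your ``Clifford adjustment'' remark covers this).

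Step 3 is where the proposal diverges, and where the gap lies. The Galois/rationality observation is correct but does no work: since each $\g_i$ (resp.\ $\d_j$) already occurs with total multiplicity $1$ in $(*)$, the only information you need is the \emph{sum} $Y=\sum_i y_i$ (resp.\ $Z=\sum_j z_j$), and the paper never invokes Galois at all. The substantive issue is the one you flag as an ``obstacle'': with unknowns $m_\a\in\{0,1\}$, $m_\b\in\{0,1,2\}$, $Y\in\{0,\dots,(q-2)/2\}$, $Z\in\{0,\dots,q/2\}$, you have not shown the degree identity has a unique solution; saying it ``appears'' to work from $q=2,4$ is not a proof. The paper supplies exactly the missing idea here: choose a Zsigmondy prime $\ell\mid q^n-1$ when $\e=+$ (taking $\ell=7$ if $(n,q)=(6,2)$), respectively $\ell\mid q^{2n}-1$ when $\e=-$. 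Then $\ell$ divides each of $\rho^2_n(1)$, $\a(1)$, $\g_i(1)$, $\d_j(1)$, while $\b(1)\equiv\rho^1_n(1)\equiv -1\pmod\ell$. Reducing the degree equation modulo $\ell$ forces $m_\b=1$ immediately. After dividing through by $(q^n-\e)$, the remaining equation in $(m_\a,Y,Z)$ is handled by observing that $q^{n-1}+\e$ (coprime to $q-1$) must divide a quantity of size at most $q^2-1$, forcing $m_\a=1$, $Z=0$, and hence $Y=(q-2)/2$.

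In short: your outline is right up to the point you yourself identify as the obstacle, but the decisive arithmetic step --- the primitive-prime-divisor congruence pinning down $m_\b$ --- is missing, and without it Step~3 is not a proof.
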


\begin{proof}
Note by \eqref{eq:dec11} 
that $1_G + (\rho^1_n+\rho^2_n)|_G$ is just the character of the permutation action on the set of $1$-spaces of
$V$. Hence, by Proposition \ref{dec-so2} we have
\begin{equation}\label{eq:dec21}
 \bigl( \rho^1_n+\rho^2_n \bigr)|_G = \Ind^G_P(1_P) + \Ind^G_H(1_H) -1_G = 1_G +\a+2\b + \sum^{(q-2)/2}_{i=1}\g_i 
 + \sum^{q/2}_{j=1}\d_j. 
\end{equation}
Furthermore, Lemma \ref{quad1} implies by Frobenius' reciprocity that 
\begin{equation}\label{eq:dec22}
  \bigl(\rho^2_n\bigr)|_G \mbox { contains }1_G \mbox { when }\e=+, \mbox{ and }\bigl(\rho^1_n\bigr)|_G \mbox { contains }1_G \mbox { when }\e=-.
\end{equation}

\smallskip
(i) First we consider the case $\e = +$.  If $(n,q) \neq (6,2)$, one can find a primitive prime divisor $\ell = \ell(2^{ne}-1)$, where 
$q = 2^e$. If $(n,q) = (6,2)$, then set $\ell = 7$. By its choice, $\ell$ divides the degrees of $\rho^2_n$, $\a$, $\g_i$, and
$\d_j$, but $\b(1) \equiv \rho^1_n(1) \equiv -1 (\bmod\ \ell)$. Hence, \eqref{eq:dec21} and \eqref{eq:dec22} imply that
$$\bigl(\rho^2_n\bigr)|_G = 1_G +\b +x\a +  \sum^{(q-2)/2}_{i=1}y_i\g_i +  \sum^{q/2}_{j=1}z_j\d_j,$$
where $x,y_i,z_j \in \{0,1\}$. Setting $y:=\sum^{(q-2)/2}_{i=1}y_i$ and $z:=\sum^{q/2}_{j=1}z_j$ and comparing the degrees,
we get
$$(1-x)(q^{n-1}+q)+(q^{n-1}+1)(q+1)((q-2)/2-y) = z(q^{n-1}-1)(q-1),$$
and so $q^{n-1}+1$ divides $(1-x+2z)(q-1)$. Note that $\gcd(q-1,q^{n-1}+1)=1$ and 
$0 \leq (1-x+2z)(q-1) \leq q^2-1 < q^{n-1}+1$. It follows that $x=1$, $z=0$, $y=(q-2)/2$, whence $y_i=1$ and $z_j=1$, as stated.

\smallskip
(ii)  Now let $\e = -$, and choose $\ell$ to be a primitive prime divisor $\ell(2^{2ne}-1)$. By its choice, $\ell$ divides the degrees of $\rho^1_n$, $\a$, $\g_i$, and
$\d_j$, but $\b(1) \equiv \rho^2_n(1) \equiv -1 (\bmod\ \ell)$. Hence, \eqref{eq:dec21} and \eqref{eq:dec22} imply that
$$\bigl(\rho^1_n\bigr)|_G = 1_G +\b +x\a +  \sum^{(q-2)/2}_{i=1}y_i\g_i +  \sum^{q/2}_{j=1}z_j\d_j,$$
where $x,y_i,z_j \in \{0,1\}$. Setting $y:=\sum^{(q-2)/2}_{i=1}y_i$ and $z:=\sum^{q/2}_{j=1}z_j$ and comparing the degrees,
we get
$$(1-x)(q^{n-1}-q)+(q^{n-1}-1)(q+1)((q-2)/2-y) = z(q^{n-1}+1)(q-1),$$
and so $(q^{n-1}-1)/(q-1)$ divides $1-x+2z$. Since
$0 \leq 1-x+2z \leq q+1 < (q^{n-1}-1)/(q-1)$, it follows that $x=1$, $z=0$, $y=(q-2)/2$, whence $y_i=1$ and $z_j=1$, as stated.
\end{proof}

For the subsequent discussion, we recall the {\it quasi-determinant} $\kappa_\e: \GO_\e \to \{-1,1\}$,
where $\GO_\e:= \mathrm{GO}(V) \cong \mathrm{GO}^\e_{2n}(q)$, defined via
$$\kappa_\e(g) := (-1)^{\dim_{\F_q}\Ker(g-1_V)}.$$
It is known, see e.g. \cite[Lemma 5.8(i)]{GT}, that $\kappa$ is a group homomorphism, with
\begin{equation}\label{eq:kappa1}
  \Ker(\kappa_\e) = \O_\e:= \O(V) \cong \O^\e_{2n}(q).
\end{equation}   
Now we prove the ``unitary'' analogue of Lemma \ref{quad1}:

\begin{lem}\label{quad2}
For $n \geq 3$ and $2|q$, the following decompositions hold:
$$\Ind^{\tilde G}_{\GO_+}(\kappa_+) = \b_n +  \sum^{q/2}_{i=1}\zeta^i_n,~~~
    \Ind^{\tilde G}_{\GO_-}(\kappa_-) = \a_n +  \sum^{q/2}_{i=1}\zeta^i_n.$$ 
\end{lem}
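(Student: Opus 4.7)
The plan is to reduce the lemma to a pointwise identity of class functions on $\tilde G$, and then verify this identity using explicit Weil-character data together with Lemma \ref{quad1}.

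For any $h\in\GO_\e$, the quasi-determinant $\kappa_\e(h)=(-1)^{d(1,h)}$ depends only on how $h$ acts on $V$, not on the choice of type-$\e$ quadratic form fixed by $h$. Consequently, for each $g\in\tilde G$ and each $x\in\tilde G$ with $xgx^{-1}\in\GO_\e$ we have $\kappa_\e(xgx^{-1})=(-1)^{d(1,g)}$, and the induction formula collapses to the clean identity
$$\eta_\e(g) \;:=\; \Ind^{\tilde G}_{\GO_\e}(\kappa_\e)(g) \;=\; (-1)^{d(1,g)}\,\pi^\e(g),$$
where $\pi^\e$ is the permutation character from Lemma \ref{quad1}. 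The task then becomes to verify that this pointwise formula equals the value of the right-hand side of the lemma on every $g\in\tilde G$.

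As a consistency check, I would first match degrees and $\ell^2$-norms. From the stated formulas for $\a_n(1),\b_n(1),\zeta^i_n(1)$, a direct calculation shows $\eta_\e(1)=[\tilde G:\GO_\e]$, which agrees with the sum of degrees on each right-hand side. Since $|\eta_\e(g)|^2=\pi^\e(g)^2$, one also has
$$[\eta_\e,\eta_\e]_{\tilde G}\;=\;[\pi^\e,\pi^\e]_{\tilde G}\;=\;\tfrac{q}{2}+1$$
by Lemma \ref{quad1}, matching the norm of each proposed right-hand side (a sum of $q/2+1$ pairwise distinct irreducible constituents, each with multiplicity $1$).

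To conclude, I would substitute the explicit character-value tables for the linear-Weil characters $\rho^1_n,\rho^2_n,\tau^i_n$ and the unitary-Weil characters $\a_n,\b_n,\zeta^i_n$ (expressed in terms of the invariants $d(x,g)$ for $x\in\overline{\F}_q^{\,\times}$, as recorded in \cite{GT}) into both sides of the identity $\eta_\e(g)=(-1)^{d(1,g)}\pi^\e(g)$, using the Lemma \ref{quad1} decomposition of $\pi^\e$. This reduces the lemma to the classical ``Weil duality'' that exchanges linear-Weil and unitary-Weil characters of $\Sp_{2n}(q)$ via multiplication by the sign $(-1)^{d(1,\cdot)}$. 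The main obstacle is the case analysis required to carry out this substitution over the conjugacy classes of $\Sp_{2n}(q)$ in even characteristic, which depends on the (somewhat intricate) Weil-character tables from \cite{GT}.
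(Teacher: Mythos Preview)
Your pointwise identity $\eta_\e(g)=(-1)^{d(1,g)}\pi^\e(g)$ is correct and elegant; it also yields $[\eta_\e,\eta_\e]_{\tilde G}=[\pi^\e,\pi^\e]_{\tilde G}=q/2+1$ for free, which the paper obtains instead via a Mackey computation (essentially the same observation in disguise, cf.\ \eqref{eq:dec321}). The two routes diverge at the final step. You propose to finish by a direct class-by-class verification using the Weil-character tables of \cite{GT}, appealing to a ``Weil duality'' that swaps linear-Weil and unitary-Weil characters under multiplication by $(-1)^{d(1,\cdot)}$; this is plausible but is not carried out, and is not a result stated as such in \cite{GT}, so the degree and norm checks you give remain only consistency checks and genuine work is still outstanding. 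The paper sidesteps any pointwise computation: with the norm $q/2+1$ and the sum $\eta_++\eta_-=\a_n+\b_n+2\sum_j\zeta^j_n$ (from \cite{GT}, see \eqref{eq:dec31}) in hand, it writes $\eta_+=x\a_n+y\b_n+\sum_j z_j\zeta^j_n$ with $x,y\in\{0,1\}$ and $z_j\in\{0,1,2\}$, then a short primitive-prime-divisor argument on the degree equation forces $x=0$, after which the norm and degree constraints give $y=1$ and all $z_j=1$. Both proofs lean on \cite{GT}, but the paper needs only the coarse decomposition \eqref{eq:dec31} and the character degrees, whereas your route would require the full character-value formulas. Your identity does resurface later (restricted to $\O(V)$) in the proof of Proposition~\ref{sp-so2}, so the observation is very much in the right spirit.
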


\begin{proof}
According to formulae (10) and (4)--(6) of \cite{GT},
\begin{equation}\label{eq:dec31}
  \Ind^{\tilde G}_{\O_+}(\kappa_+)+ \Ind^{\tilde G}_{\O_-}(\kappa_-) = \a_n+\b_n +  2\sum^{q/2}_{i=1}\zeta^i_n.
\end{equation}  
Hence we can write
\begin{equation}\label{eq:dec32}
  \Ind^{\tilde G}_{\O_+}(\kappa_+) = x\a_n+y\b_n +  \sum^{q/2}_{i=1}z_i\zeta^i_n,
\end{equation}  
where $x,y,z_i \in \Z_{\geq 0}$, $x,y \leq 1$ and $z_i \leq 2$.
Note that, since $\pi^+= \Ind^{\tilde G}_{\GO_+}(1_{\GO_+})$, Lemma \ref{quad1} implies that 
$$|\GO_+ \backslash \tilde G/\GO_+| = \frac{q}{2}+1.$$
Next, by Mackey's formula we have
$$[\Ind^{\tilde G}_{\O_+}(\kappa_+),\Ind^{\tilde G}_{\O_+}(\kappa_+)]_G = \sum_{\GO_+t\GO_+ \in \GO_+ \backslash \tilde G/\GO_+}
    [(\kappa_+)|_{\GO_+ \cap t\GO_+t^{-1}},(\kappa^t_+)|_{\GO_+ \cap t\GO_+t^{-1}}]_{\GO_+ \cap t\GO_+t^{-1}},$$
where $\kappa^t_+(x) = \kappa(x^t) := \kappa(t^{-1}xt)$ for any $x \in \GO_+ \cap t\GO_+t^{-1}$. For such an $x$, note that
\begin{equation}\label{eq:dec321}
  \kappa_+(x) = 1 \Leftrightarrow 2 | \dim_{\F_q}\Ker(x-1_V) \Leftrightarrow 2 | \dim_{\F_q}\Ker(x^{t-1}-1_V) \Leftrightarrow 
    (\kappa_+)^t(x) = 1,
\end{equation}    
i.e. $\kappa_+(x) = \kappa^t_+(x)$. It follows that
\begin{equation}\label{eq:dec33}
  x^2+y^2+\sum^{q/2}_{i=1}z_i^2=
    [\Ind^{\tilde G}_{\O_+}(\kappa_+),\Ind^{\tilde G}_{\O_+}(\kappa_+)]_G = |\GO_+ \backslash \tilde G/\GO_+| = \frac{q}{2}+1.
\end{equation}    
On the other hand, equating the character degrees in \eqref{eq:dec32} we obtain 
\begin{equation}\label{eq:dec34}
  \frac{q^n(q^n+1)}{2} =  x\frac{(q^n-1)(q^n-q)}{2(q+1)}+y\frac{(q^n+1)(q^n+q)}{2(q+1)}+\sum^{q/2}_{i=1}z_i \cdot 
    \frac{q^{2n}-1}{q+1}.
\end{equation}
We claim that $x=0$. Indeed, if $(n,q) = (3,2)$, then \eqref{eq:dec34} implies that $3|x$, and so $x=0$ as $0 \leq x \leq 1$.
Assume $(n,q) \neq (3,2)$. Then we can find a primitive prime divisor $\ell = \ell(2^{2ne}-1)$ for $q = 2^e$, and note
from \eqref{eq:dec34} that $\ell|x$. Since $\ell > 2$ and $x \in \{0,1\}$, we again have $x=0$.

Now if $y=0$, then \eqref{eq:dec34} implies that $q^n(q^n+1)/2$ is divisible by $(q^{2n}-1)/(q+1)$, a contradiction. Hence
$y=1$, and from \eqref{eq:dec34} we obtain that $\sum^{q/2}_{i=1}z_i = q/2$. On the other hand, 
$\sum^{q/2}_{i=1}z_i^2 = q/2$ by \eqref{eq:dec33}. Thus $\sum^{q/2}_{i=1}(z_i-1)^2 = 0$, and so $z_i = 1$ for all
$i$. Together with \eqref{eq:dec31}, this yields the two stated decompositions.          
\end{proof}

\begin{prop}\label{sp-so2}
Let $n \geq 5$, $2|q$, and $\e = \pm$. Then the characters $\a_n$ and $\b_n$ of $\Sp(V) \cong \Sp_{2n}(q)$
restrict to $G = \O(V) \cong \O^\e_{2n}(q)$ as follows:
$$\begin{array}{ll}(\a_n)|_{\O^+_{2n}(q)} = \sum^{q/2}_{j=1}\d_j, &
    (\b_n)|_{\O^+_{2n}(q)} = 1+\a + \sum^{(q-2)/2}_{i=1}\g_i,\\ 
     (\a_n)|_{\O^-_{2n}(q)} = 1+\a + \sum^{(q-2)/2}_{i=1}\g_i, & (\b_n)|_{\O^-_{2n}(q)} = \sum^{q/2}_{j=1}\d_j.
     \end{array}$$
In particular, the following formula holds for the irreducible character $\b$ of $G$ of degree $(q^{2n}-q^2)/(q^2-1)$:
$$\bigl( (\rho^1_n+\rho^2_n)-(\a_n+\b_n)\bigr)|_{\O^\e_{2n}(q)} = 2\b.$$  
\end{prop}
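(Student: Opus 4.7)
The plan is to mirror the proof of Proposition \ref{sp-so1}, with Lemma \ref{quad2} playing the role of Lemma \ref{quad1}. Since $\O^\e = \Ker(\kappa_\e)$ by \eqref{eq:kappa1}, induction in stages gives $\Ind^{\tilde G}_{\O^\e}(1_{\O^\e}) = \pi^\e + \Ind^{\tilde G}_{\GO^\e}(\kappa_\e)$; substituting Lemmas \ref{quad1} and \ref{quad2} yields
\begin{align*}
\Ind^{\tilde G}_{\O^+}(1_{\O^+}) &= 1 + \rho^2_n + \b_n + {\textstyle\sum_{i=1}^{(q-2)/2}\tau^i_n + \sum_{i=1}^{q/2}\zeta^i_n},\\
\Ind^{\tilde G}_{\O^-}(1_{\O^-}) &= 1 + \rho^1_n + \a_n + {\textstyle\sum_{i=1}^{(q-2)/2}\tau^i_n + \sum_{i=1}^{q/2}\zeta^i_n}.
\end{align*}
Frobenius reciprocity extracts the trivial-constituent multiplicities $[\b_n|_{\O^+},1]_{\O^+} = [\a_n|_{\O^-},1]_{\O^-} = 1$ and $[\a_n|_{\O^+},1]_{\O^+} = [\b_n|_{\O^-},1]_{\O^-} = 0$, matching the claim.

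Next I verify the arithmetic identity $(\rho^1_n+\rho^2_n)(1) - (\a_n+\b_n)(1) = 2q^2(q^{2n-2}-1)/(q^2-1) = 2\b(1)$, so $(\a_n+\b_n)|_G$ has the same total degree as $1 + \a + \sum\g_i + \sum\d_j$. Since $\a_n(1), \b_n(1) < (q^n-\e)(q^{n-1}+\e)$ for $n \geq 5$, by \cite[Theorem 1.3]{N} every irreducible constituent of $\a_n|_G$ or $\b_n|_G$ lies in $\{1_G, \a, \b, \g_i, \d_j\}$.

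To rule out $\b$ as a constituent, I apply the primitive prime divisor argument used in Proposition \ref{sp-so1}: take $\ell := \ell(q^n-1)$ (with $\ell := 7$ if $(n,q) = (6,2)$) for $\e = +$, and $\ell := \ell(q^{2n}-1)$ for $\e = -$. In each case $\ell$ divides $\a(1), \g_i(1), \d_j(1)$ while $\b(1) \equiv -1 \pmod{\ell}$. Writing the degree equation for $\b_n|_{\O^+}$ modulo $\ell$ and using the trivial-constituent data from the first paragraph forces $m_\b \equiv 0 \pmod{\ell}$ for the multiplicity $m_\b$ of $\b$; combined with the a priori bound $m_\b \leq \b_n(1)/\b(1) = (q^n+1)(q-1)/(2q(q^{n-1}-1)) < q/2$, this gives $m_\b = 0$ provided $\ell > q/2$. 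A symmetric computation handles $\a_n|_{\O^-}$. Degree comparison as in the final paragraph of the proof of Proposition \ref{sp-so1} then determines all remaining multiplicities in $\{0,1\}$, yielding $(\a_n+\b_n)|_G = 1 + \a + \sum\g_i + \sum\d_j$; splitting this between $\a_n|_G$ and $\b_n|_G$ using the trivial-constituent data and the individual total degrees $\a_n(1), \b_n(1)$ forces the four stated decompositions. The last formula then follows by subtracting from the known $(\rho^1_n+\rho^2_n)|_G = 1_G + \a + 2\b + \sum\g_i + \sum\d_j$ of Proposition \ref{sp-so1}.

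The main obstacle is ensuring $\ell > q/2$ uniformly. Zsygmondy's bound $\ell \geq n+1$ suffices when $n \geq q/2$, but for the borderline cases (e.g., $n=5$ with large $q$) one needs either a sharper estimate on the actual primitive prime divisor of $q^n-1$ (which is typically much larger than $n+1$), or an alternative argument based on the norm $[(\a_n+\b_n)|_G, (\a_n+\b_n)|_G]_G$ computed via the identity $[\c_1|_G, \c_2|_G]_G = [\c_1\overline{\c_2}, \Ind^{\tilde G}_G(1_G)]_{\tilde G}$ using the explicit decomposition of $\Ind^{\tilde G}_G(1_G)$ established in the first paragraph.
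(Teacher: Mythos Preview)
Your approach is genuinely different from the paper's, and the obstacle you flag in the last paragraph is a real gap, not a technicality. The primitive prime divisor $\ell$ of $q^n-1=2^{ne}-1$ is only guaranteed to satisfy $\ell\equiv 1\pmod{ne}$, hence $\ell\ge ne+1$; for fixed $n\ge 5$ and $e\to\infty$ this is far smaller than $q/2=2^{e-1}$, so the conclusion $m_\b=0$ simply does not follow from $\ell\mid m_\b$ and $m_\b<q/2$. Your suggested fallback via the inner product $[(\a_n+\b_n)|_G,(\a_n+\b_n)|_G]_G$ would require knowing the decomposition of $\a_n^2$, $\a_n\b_n$, $\b_n^2$ inside $\tilde G$ well enough to pair against the induced character you computed; this is a nontrivial side computation that you have not carried out. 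Even granting $(\a_n+\b_n)|_G=1+\a+\sum\g_i+\sum\d_j$, the final ``splitting'' step is underspecified: the trivial-constituent datum and the two degrees alone do not obviously pin down which of $\a,\g_i,\d_j$ land in $\a_n|_G$ versus $\b_n|_G$.

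The paper bypasses all of this with a single Mackey-type observation. For any $t\in\tilde G$ and $x\in G\cap t\GO_+t^{-1}$, one has $\kappa_+^t(x)=1$ because $x\in G=\O(V)=\Ker(\kappa_\e)$ forces $2\mid\dim\Ker(x-1_V)$, and this parity is conjugation-invariant (cf.\ \eqref{eq:dec321}). Hence $\bigl(\Ind^{\tilde G}_{\GO_+}(\kappa_+)\bigr)|_G=\pi^+|_G$, and likewise with $+$ replaced by $-$. Subtracting and invoking Lemmas \ref{quad1}, \ref{quad2} gives the key identity
\[
(\b_n-\a_n)|_G=(\rho^2_n-\rho^1_n)|_G.
\]
Since Proposition \ref{sp-so1} has already computed $(\rho^1_n)|_G$ and $(\rho^2_n)|_G$ explicitly, the right-hand side is known, and one reads off both restrictions $\a_n|_G$, $\b_n|_G$ directly by matching irreducible constituents on the two sides (one side being a genuine character forces containment, and degree comparison finishes). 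No primitive-prime-divisor or norm argument is needed.
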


\begin{proof}
By Mackey's formula,
$$\bigl(\Ind^{\tilde G}_{\O_+}(\kappa_+)\bigr)|_G = \sum_{Gt\GO_+\in G \backslash \tilde G/\GO_+}
    \Ind^G_{G \cap t\GO_+t^{-1}}\bigl((\kappa^t_+)|_{G \cap t\GO_+t^{-1}}\bigr),$$
and similarly for $\pi^+=\Ind^{\tilde G}_{\O_+}(1_{\GO_+})$.
The argument in \eqref{eq:dec321} shows that $\kappa^t_+(x)=1$ for all $x \in G \cap t\GO_+t^{-1}$, and so 
$\pi^+$ and $\Ind^{\tilde G}_{\O_+}(\kappa_+)$ agree on $G$. Similarly, 
$\pi^-$ and $\Ind^{\tilde G}_{\O_-}(\kappa_-)$ agree on $G$. It then follows from Lemmas \ref{quad1} and 
\ref{quad2} that
\begin{equation}\label{eq:dec41}
  \bigl( \rho^2_n-\rho^1_n\bigr)|_G = \bigl( \pi^+-\pi^-\bigr)|_G = 
    \bigl( \Ind^{\tilde G}_{\O_+}(\kappa_+)- \Ind^{\tilde G}_{\O_-}(\kappa_-) \bigr)|_G = \bigl( \b_n-\a_n\bigr)|_G.
\end{equation}    
First assume that $\e=+$. Then using Proposition \ref{sp-so1} we get
$$\bigl( \b_n-\a_n\bigr)|_G= 1_G+\a+\sum^{(q-2)/2}_{i=1}\g_i-\sum^{q/2}_{j=1}\d_j,$$
i.e. 
$$\sum^{q/2}_{j=1}\d_j+(\b_n)|_G = 1_G +\a+\sum^{(q-2)/2}_{i=1}\g_i + (\a_n)|_G.$$  
Aside from $(\a_n)|_G$ and $(\b_n)|_G$, all the other characters in the above equality are irreducible and 
pairwise distinct. It follows that $(\a_n)|_G$ contains $\sum^{q/2}_{j=1}\d_j$. Comparing the degrees, we 
see that 
$$(\a_n)|_G = \sum^{q/2}_{j=1}\d_j,$$ 
which then implies that  
$$(\b_n)|_G = 1_G +\a+\sum^{(q-2)/2}_{i=1}\g_i.$$ 
Now assume that $\e=-$. Then again using Proposition \ref{sp-so1} and \eqref{eq:dec41} we get
$$\bigl( \a_n-\b_n\bigr)|_G= 1_G+\a+\sum^{(q-2)/2}_{i=1}\g_i-\sum^{q/2}_{j=1}\d_j,$$
i.e. 
$$\sum^{q/2}_{j=1}\d_j+(\a_n)|_G = 1_G +\a+\sum^{(q-2)/2}_{i=1}\g_i + (\b_n)|_G.$$  
Aside from $(\a_n)|_G$ and $(\b_n)|_G$, all the other characters in the above equality are irreducible and 
pairwise distinct. It follows that $(\b_n)|_G$ contains $\sum^{q/2}_{j=1}\d_j$. Comparing the degrees, we 
see that 
$$(\b_n)|_G = \sum^{q/2}_{j=1}\d_j,$$ 
which then implies that  
$$(\a_n)|_G = 1_G +\a+\sum^{(q-2)/2}_{i=1}\g_i.$$ 
For both $\e = \pm$, the last statement now follows from \eqref{eq:dec21}.
\end{proof}

Proposition \ref{sp-so2} leads to the following explicit
formula for $\b$, which we will show to hold for all special orthogonal groups in all characteristics and all dimensions, and which
is of independent interest.
In this result, we let $V = \F_q^n$ be a quadratic space, $L := \SO(V)$ if $2 \nmid q$, 
$L := \O(V)$ if $2|q$, and extend the action of $L$ on $V$ to $\tilde V := V \otimes_{\F_q}\F_{q^2}$, and we assume
$2 \nmid q$ if $2 \nmid n$. Also, set
$$\mu_{q-1}:= \F_q^\times,~~\mu_{q+1} := \{ x \in \F_{q^2}^\times \mid x^{q+1} = 1 \}.$$
If $2 \nmid q$, let $\chi_2^+$ be the unique linear character of order $2$ of $\mu_{q-1}$, and let $\chi_2^-$ be the unique linear character of order $2$ of $\mu_{q+1}$.

\begin{thm}\label{beta-so2}
Let $n \geq 10$, $\e = \pm$, and let $q$ be any prime power. If $2|n$, let $\psi = \b$ be the irreducible constituent $\b$ of 
degree $(q^{n}-q^2)/(q^2-1)$ of the rank $3$ permutation character of $L = \O(V)$ when $2|q$, and 
of $L = \SO(V)$ when $2 \nmid q$, on the set of singular 
$1$-spaces of its natural module $V=\F_q^{n}$. If $2 \nmid qn$, let $\psi$ be the irreducible character of 
$L = \SO(V)$ of degree $(q^n-q)/(q^2-1)$ denoted by $D_{\St}$ in \cite[Proposition 5.7]{LBST}. Then for any $g \in L$ we have 
$$\psi(g) = \frac{1}{2(q-1)}\sum_{\l \in \mu_{q-1}}q^{\dim_{\F_q}\Ker(g - \l \cdot 1_V)} - 
    \frac{1}{2(q+1)}\sum_{\l \in \mu_{q+1}}(-q)^{\dim_{\F_{q^2}}\Ker(g - \l \cdot 1_{\tilde V})}  
    -1$$
when $2|n$, and by 
$$\psi(g) = \frac{1}{2(q-1)}\sum_{\l \in \mu_{q-1}}\chi_2^+(\l)q^{\dim_{\F_q}\Ker(g - \l \cdot 1_V)} +
    \frac{1}{2(q+1)}\sum_{\l \in \mu_{q+1}}\chi^-_2(\l)(-q)^{\dim_{\F_{q^2}}\Ker(g - \l \cdot 1_{\tilde V})}  
 $$ 
when $2 \nmid qn$.   
\end{thm}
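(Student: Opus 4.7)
The plan is to verify the formula separately in characteristic $2$ (where Proposition \ref{sp-so2} applies) and in odd characteristic (via Deligne--Lusztig theory), in each case by expressing $\psi(g)$ as a difference of two Weil-type class functions whose values are given by the two sums on the right-hand side.

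\textbf{Characteristic $2$.} Here necessarily $2 \mid n$, and Proposition \ref{sp-so2} yields
$$2\b = \bigl((\rho^1_{n/2}+\rho^2_{n/2}) - (\a_{n/2}+\b_{n/2})\bigr)|_L$$
inside $\Sp_n(q) \supset L$. The linear-Weil sum is directly computed from the rank $3$ decomposition \eqref{eq:dec11}: counting fixed $1$-spaces of $g$ in $V$ by eigenvalue gives
$$(\rho^1_{n/2}+\rho^2_{n/2})(g) = \rho(g) - 1 = \frac{1}{q-1}\sum_{\l \in \mu_{q-1}}q^{d(\l,g)} - 2.$$
For the unitary-Weil sum, one uses Lemma \ref{quad2} together with the dual-pair description of the characters $\a_{n/2},\b_{n/2},\zeta^i_{n/2}$ (coming from the pair $(\Sp_n(q),U_1(q))$) to obtain
$$(\a_{n/2}+\b_{n/2})(g) = \frac{1}{q+1}\sum_{\l \in \mu_{q+1}}(-q)^{\dim_{\F_{q^2}}\Ker(g - \l \cdot 1_{\tilde V})}.$$
This is the standard expression for the unitary-Weil character and can be verified by evaluating both sides on representatives of $\Sp_n(q)$-conjugacy classes using \cite{GT}, or by a Frobenius-reciprocity computation starting from Lemma \ref{quad2}. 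Subtracting the two formulas, dividing by $2$, and restricting to $L$ produces the stated identity.

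\textbf{Odd characteristic.} Here $L = \SO(V)$ does not embed naturally into a symplectic group, and we appeal instead to Deligne--Lusztig theory. Both $\b$ (when $2\mid n$) and $D_{\St}$ (when $2\nmid n$) are uniform class functions on $L$, that is, $\Q$-linear combinations of the Deligne--Lusztig characters $R^L_T(1_T)$ as $T$ runs over the maximal tori of $L$. When $2\mid n$, one extracts $\b$ from the rank $3$ permutation character $1+\a+\b$ together with the known formula for $\a$; when $2\nmid n$, $D_{\St}$ is already described in \cite[Proposition 5.7]{LBST}. The Deligne--Lusztig character formula evaluates $R^L_T(1_T)(g)$ in terms of Green functions at the semisimple part of $g$, and for orthogonal groups these Green functions admit closed-form expressions in terms of $q^{d(\l,g)}$ (when $\l \in \mu_{q-1}$ is an $\F_q$-rational eigenvalue) and $(-q)^{\dim_{\F_{q^2}}\Ker(g - \l \cdot 1_{\tilde V})}$ (when $\l \in \mu_{q+1}$ represents a Frobenius-conjugate pair of eigenvalues). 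Grouping the contributions accordingly, and inserting $\chi_2^\pm$ in the odd-dimensional subcase to separate the $\SO$-irreducible constituent from its $\O/\SO$-conjugate, yields the stated formulas.

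\textbf{Main obstacle.} The technical core is the second step of the characteristic-$2$ argument: the identification of $(\a_{n/2}+\b_{n/2})(g)$ with the clean $\mu_{q+1}$-sum, which rests on the Weil correspondence for the dual pair $(\Sp_n(q),U_1(q))$ and is not immediate from Lemma \ref{quad2} alone. In the odd-characteristic case the corresponding difficulty is the bookkeeping of signs in the Deligne--Lusztig character formula, in particular the appearance of $\chi_2^\pm$ when $2\nmid n$, which must be traced through the sum over $\F_q$-rational maximal tori of $L$ and matched against the $\SO/\O$ quotient.
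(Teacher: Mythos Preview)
Your characteristic $2$ argument matches the paper's: the paper also invokes the last identity of Proposition \ref{sp-so2} and then cites formulae (3) and (6) of \cite{GT}, which are precisely the explicit expressions
\[
(\rho^1_{n/2}+\rho^2_{n/2})(g)=\frac{1}{q-1}\sum_{\l\in\mu_{q-1}}q^{d(\l,g)}-2,\qquad
(\a_{n/2}+\b_{n/2})(g)=\frac{1}{q+1}\sum_{\l\in\mu_{q+1}}(-q)^{\dim_{\F_{q^2}}\Ker(g-\l)}
\]
for the linear- and unitary-Weil characters. So the obstacle you flag is not an obstacle: no appeal to Lemma \ref{quad2} or to a dual-pair argument is needed, since the second identity is already a theorem in \cite{GT}.

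In odd characteristic your route diverges from the paper's. The paper does \emph{not} use Deligne--Lusztig theory or Green functions; instead it invokes \cite[Proposition 5.7]{LBST}, which expresses $\psi$ via the dual pair $\SO_n(q)\times\Sp_2(q)\hookrightarrow\Sp_{2n}(q)$ as
\[
\psi(g)=\frac{1}{|\Sp_2(q)|}\sum_{x\in\Sp_2(q)}\omega_n(xg)\,\St(x)-\kappa,
\]
with $\omega_n$ the reducible Weil character of $\Sp_{2n}(q)$ and $\St$ the Steinberg character of $\Sp_2(q)$. The sum is then evaluated term by term: $\St$ kills non-semisimple $x$, and for $x$ in the split or non-split torus of $\Sp_2(q)$ one views $xg$ inside a Levi $\GL_n(q)$ or $\GU_n(q)$ and applies G\'erardin's explicit formulae \cite[Theorems 2.4(c), 3.3]{Ge} for $\omega_n$. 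The signs $\chi_2^\pm$ emerge naturally from the determinant factor in G\'erardin's formulae, and the parity observation \eqref{eq:kappa2} cleans up the contributions at $\l=\pm 1$. This is entirely parallel to the characteristic $2$ computation and avoids any uniformity claim or Green-function bookkeeping.

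Your Deligne--Lusztig approach is not wrong in principle, but it is substantially vaguer: you would need to verify that $\psi$ is uniform (not automatic for orthogonal groups), identify the precise linear combination of $R^L_T(1_T)$, and then carry out the Green-function evaluation with correct signs---exactly the ``bookkeeping'' you name as the main difficulty. The paper's dual-pair method bypasses all of this by reducing directly to known Weil-character values, and gives a uniform treatment across both parities of $q$.
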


\begin{proof}
In the case $2|q$, the statement follows from the last formula in Proposition \ref{sp-so2}, together with formulae (3) and (6) of \cite{GT}. Assume now that $2 \nmid q$, and set $\kappa := 1$ if $2|n$ and $\kappa := 0$ if $2 \nmid n$.
By \cite[Proposition 5.7]{LBST} (and in the notation of \cite[\S5.1]{LBST}), 
$$\psi(g)=\frac{1}{|\Sp_2(q)|}\sum_{x \in \Sp_2(q)}\omega_{n}(xg)\St(x)-\kappa,$$
where $\omega_{2n}$ denotes a reducible Weil character of $\Sp_{2n}(q)$ and $\St$ denotes the Steinberg character of
$S:= \Sp_2(q)$. 

If $x \in S$ is not semisimple, then $\St(x) = 0$.

Suppose $x = \mathrm{diag}(\l,\l^{-1}) \in T_1 <S$, where $T_1 \cong C_{q-1}$ is a split torus and $\l \in \mu_{q-1}$.
In this case, we can view $T_1$ as $\GL_1(q)$, embed $G$ in $\GL_{n}(q)$, and view $xg$ as 
an element $h=\l g$ in a Levi subgroup $\GL_{n}(q)$ of $\Sp_{2n}(q)$, with $\det(h) = \l^{n}$. It follows from 
\cite[Theorem 2.4(c)]{Ge} that 
$$\omega_{n}(xg) = \chi_2^+(\l^n) q^{\dim_{\F_q}\Ker(h-1)} = \chi_2^+(\l^n) q^{\dim_{\F_q}\Ker(g-\l^{-1})}.$$
If $\l \neq \pm 1$, then $|x^S| = q(q+1)$ and $\St(x) = 1$. If $\l = \pm 1$, then $|x^S| = 1$ and $\St(x)=q$.
Note that since $g \in \mathrm{GO}(V)$, 
$$\dim_{\F_q}\Ker(g-\l^{-1}) = \dim_{\F_q}\Ker(\tw t g-\l^{-1}) = \dim_{\F_q}\Ker(g^{-1}-\l^{-1}) = \dim_{\F_q}\Ker(g-\l).$$
We also note that since $g \in \SO(V)$, 
\begin{equation}\label{eq:kappa2}
  \dim_{\F_q}\Ker(g-1_V) \equiv n (\mod 2),~~ \dim_{\F_q}\Ker(g+1_V) \equiv 0 (\mod 2).
\end{equation}
(Indeed, since $\det(g)=1$, each of $\Ker(g_s-1_V)$ and $\Ker(g_s+1_V)$ is a non-degenerate subspace of $V$ if nonzero, where $g=g_sg_u$ is the Jordan decomposition; furthermore, $2|\dim_{\F_q}\Ker(g_s+1_V)$ and 
$\dim\Ker_{\F_q}\Ker(g_s-1_V) \equiv n (\mod 2)$. Hence the claim reduces to the unipotent case $g=g_u$. In the latter case,
the number of Jordan blocks of $g_u$ of each even size is even, see \cite[\S13.1]{Car}, and the claim follows.) 

Suppose $x = \mathrm{diag}(\mu,\mu^{-1}) \in T_2 <S$, where $T_2 \cong C_{q+1}$ is a non-split torus and 
$\mu \in \mu_{q+1}$ with $\mu \neq \pm 1$. Then $\St(x) = -1$ and $|x^S| = q(q-1)$. In this case, we can view $T_2$ as 
$\GU_1(q)$, embed $G$ in $\GU_{n}(q)$, and view $xg$ as 
an element $h=\mu g$ in a subgroup $\GU_{n}(q)$ of $\Sp_{2n}(q)$, with $\det(h) = \mu^{n}$. It follows from
\cite[Theorem 3.3]{Ge} that 
$$\omega_{n}(xg) = (-1)^n\chi_2^-(\mu^n)(-q)^{\dim_{\F_{q^2}}\Ker(h-1)} = (-1)^n\chi_2^-(\mu^n)(-q)^{\dim_{\F_{q^2}}\Ker(g-\mu^{-1})}.$$
Altogether, we have shown that
\begin{equation}\label{eq:dec51}
  \begin{aligned}\psi(g) & = \frac{1}{q^2-1}\bigl(q^{\dim_{\F_q}\Ker(g-1)}+\chi^+_2((-1)^n)q^{\dim_{\F_q}\Ker(g+1)}\bigr)\\ 
    & +\frac{1}{2(q-1)}\sum_{\l \in \mu_{q-1} \smallsetminus \{\pm 1\}}\chi^+_2(\l^n)q^{\dim_{\F_q}\Ker(g -\l)}\\
     & - \frac{(-1)^n}{2(q+1)}\sum_{\mu \in \mu_{q+1} \smallsetminus \{\pm 1\}}\chi^-_2(\mu^n)(-q)^{\dim_{\F_{q^2}}\Ker(g-\mu)}
         -\kappa,\end{aligned}
\end{equation}    
and the statement now follows if we use \eqref{eq:kappa2}.
\end{proof}

\subsection{Some character estimates}

\begin{prop}\label{rat-so21}
Let $q$ be any prime power, $G = \O^\e_{2n}(q)$ with $n \geq 5$, $\e=\pm$, 
and let $g \in G$ have support $s=\supp(g)$. 
Assume that $\chi \in \{\a,\b\}$ if $2 \nmid q$, and $\chi \in \{\a,\b,\g_i\}$ if $2|q$. Then 
$$\frac{|\c(g)|}{\c(1)} \leq \frac{1}{q^{s/3}}.$$
\end{prop}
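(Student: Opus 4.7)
Let $d = d(g) = 2n-s$. The case $s=0$ is trivial: $g$ is a scalar $\pm 1 \in G$ and $|\c(g)|/\c(1) = 1 = q^0$. The case $s=1$ cannot occur in the derived subgroup $G = \O^\e_{2n}(q)$: such a $g$ would be a reflection (if $2\nmid q$) or an orthogonal transvection (if $2\mid q$), and in either case it lies outside $G$ by the usual spinor norm or Dickson invariant considerations. So we may assume $s \geq 2$.

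For $\c = \b$, the plan is to apply the formula of Theorem \ref{beta-so2}, which expresses $\b(g)$ as a signed linear combination of terms $q^{\dim \Ker(g-\l)}$ over $\l \in \mu_{q-1} \cup \mu_{q+1}$. Two observations drive the estimate. First, if the maximum $d = \dim \Ker(g-\l_0)$ is attained at some $\l_0 \neq \pm 1$, then form-preservation pairs $\Ker(g-\l_0)$ non-degenerately with the distinct subspace $\Ker(g-\l_0^{-1})$ of the same dimension, forcing $d \leq n$, i.e., $s \geq n$. Secondly, for $s < n$ the maximum is attained at $\l_0 \in \{\pm 1\} \subseteq \mu_{q-1} \cap \mu_{q+1}$, and $d$ is automatically even (the derived-subgroup condition forces $\dim \Ker(g\mp 1_V)$ to be even by the formula $D(g)=0$ for the Dickson invariant in characteristic $2$, and by \eqref{eq:kappa2} for odd $q$). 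In this case the two $\l_0$-contributions in Theorem \ref{beta-so2} partially cancel as
\[
\frac{q^d}{2(q-1)} - \frac{(-q)^d}{2(q+1)} = \frac{q^d}{q^2 - 1}.
\]
Combining this with the crude bound $O(q^s)$ on the non-$\l_0$ terms (using $\dim\Ker(g-\l) \leq s$ for $\l \neq \l_0$) yields $|\b(g)| \leq Cq^{d-2}$ for $s < n$, and $|\b(g)| \leq Cq^n$ for $s \geq n$. Dividing by $\b(1) \geq \tfrac12 q^{2n-2}$ produces $|\b(g)|/\b(1) \leq q^{-s/3}$ for all $s \geq 2$.

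For $\c = \g_i$ (in characteristic $2$ only), use the fact that $\g_i = \Ind^G_P(\nu_i)$ with $\nu_i$ a linear character of $P$, established in the proof of Proposition \ref{dec-so2}(ii). The standard induced-character bound gives $|\g_i(g)| \leq \rho(g)$, where $\rho(g)$ denotes the number of singular $1$-spaces of $V$ fixed by $g$. A parallel eigenspace analysis bounds $\rho(g) \leq Cq^{2n-s-2}$: the eigenspace $\Ker(g-1)$ is non-degenerate (for $s<n$) and contains $O(q^{d-2})$ singular $1$-spaces, while each totally isotropic eigenspace $\Ker(g-\l)$ with $\l \in \F_q^\times \smallsetminus \{1\}$ has dimension at most $s$ and contributes $O(q^{s-1})$ singular $1$-spaces. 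With $\g_i(1) = (q^n-\e)(q^{n-1}+\e)/(q-1) \geq \tfrac12 q^{2n-3}$, the ratio bound follows.

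For $\c = \a$, apply the identity $\a(g) = \rho(g) - 1 - \b(g)$ from $\Ind^G_P(1_P) = 1_G + \a + \b$, and combine the two preceding estimates. Since $\a(1) \sim q^{2n-3}$ is smaller than $\b(1) \sim q^{2n-2}$ by a factor of $q$, the cancellation-based bound $|\b(g)| \leq Cq^{d-2}$ obtained above is essential here: without the saving factor of $q^{-2}$ it produces, the term $|\b(g)|/\a(1)$ would exceed $q^{-s/3}$. The main technical obstacle is therefore establishing precisely this sharp cancellation in Theorem \ref{beta-so2} at $\l_0 = \pm 1$, which crucially depends on the parity constraint $2 \mid d$ coming from membership in the derived subgroup; without it the argument would fail in the regime of small support, where the bound is most delicate.
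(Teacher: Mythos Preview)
Your overall architecture matches the paper's: use Theorem~\ref{beta-so2} for $\b$, the induced-character bound $|\g_i(g)|\le\rho(g)$ for $\g_i$, and the identity $\a=\rho-1-\b$ for $\a$. The observation that $2\mid d$ forces the two $\l_0$-contributions in Theorem~\ref{beta-so2} to combine as $q^d/(q^2-1)$ is exactly the mechanism the paper exploits in \eqref{eq:rb6}.

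There is, however, a genuine gap in your treatment of $\a$ at small support. Your plan bounds $|\a(g)|\le\rho(g)+|\b(g)+1|$ by the triangle inequality, using $\rho(g),|\b(g)+1|=O(q^{d-2})$. But $\a(1)\sim q^{2n-1}/(q^2-1)$, so the resulting ratio is only $O(q^{1-s})$ with a constant like $q/(q-1)+q^2/(q^2-1)$, and for $s=2$ and $q\le 7$ this exceeds $q^{-2/3}$. (For $q=2$ the ratio comes out above $1$.) The paper does \emph{not} use the triangle inequality here: it observes that the leading terms of $\rho(g)$ and $\b(g)+1$ coincide (both $\approx q^{d}/(q^2-1)$) and cancel in the difference $\a(g)=\rho(g)-(\b(g)+1)$, dropping one more power of $q$. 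To make this work one needs \emph{two-sided} bounds on $\rho(g)$, and for $s=2$ the paper obtains these via a separate geometric argument (finding a non-degenerate $U\subset\Ker(g-z\cdot 1_V)$ of dimension $2n-4$ using \cite[Lemma~4.9]{TZ1} and counting singular vectors in $U\oplus U^\perp$ directly). The same issue arises at $s=n=5$, where the paper again sharpens the estimate on $\b(g)+1$ rather than relying on absolute values. Your plan, as written, does not supply either ingredient.

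A smaller point: your assertion that $\Ker(g-1)$ is non-degenerate for $s<n$ is false for non-semisimple $g$ (a long root element has $\Ker(g-1)=W^\perp$ with $W$ a singular $2$-plane). The count of singular $1$-spaces is still $O(q^{d-2})$ in the degenerate case since the radical has dimension at most $s$, so the conclusion survives, but the justification needs to be replaced.
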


\begin{proof}
(i) First we consider the case $s \geq n \geq 5$. Then 
\begin{equation}\label{eq:rb1}
  d(x,g) \leq 2n-s 
\end{equation}  
for any $x \in \overline{\F}_q^\times$. In particular,
\begin{equation}\label{eq:rb2}
  0 \leq \rho(g) \leq \sum_{x \in \F_q^\times}\frac{q^{d(x,g)}-1}{q-1} \leq q^{2n-s}-1.
\end{equation}  
Now, (when $2|q$) part (i) of the proof of Proposition \ref{dec-so2} shows that 
$\g_i = \Ind^G_P(\nu_j)$ for some linear character $\nu_j$ of
$P$, and recall that $\rho = \Ind^G_P(1_P)$. It follows that 
$$|\g_i(g)| \leq |\rho(g)| \leq q^{2n-s}-1,$$
and so $|\g_i(g)/\g_i(1)| < 1/q^{s-2} \leq q^{-3s/5}$ as $\g_i(1) = [G:P] > q^{2n-2}$. Next, using Theorem \ref{beta-so2} and 
\eqref{eq:rb1} we also see that
\begin{equation}\label{eq:rb3}
  |\b(g)+1| \leq \frac{1}{2(q-1)}\sum_{x \in \F_q^\times}q^{d(x,g)}
    + \frac{1}{2(q+1)}\sum_{x \in \overline{\F}_q^\times,x^{q+1}=1}q^{d(x,g)} \leq q^{2n-s}.
\end{equation}    
In particular, $|\b(g)| \leq q^{2n-s}+1$. Since $\b(1) = (q^{2n}-q^2)/(q^2-1)$, we deduce that
$|\b(g)/\b(1)| < q^{-3s/5}$. Furthermore, as $\a(g) = \rho(g)-(\b(g)+1)$, we obtain from \eqref{eq:rb2}--\eqref{eq:rb3} that
$$|\a(g)| \leq 2q^{2n-s}-1.$$
If $s \geq 6$, then it follows that $|\a(g)/\a(1)| < q^{4-s} \leq q^{-s/3}$, since $\a(1) > q^{2n-3}$. Suppose that $s=n=5$. 
Then we can strengthen \eqref{eq:rb3} to 
$$\frac{-2q^5-(q-1)q^3}{2(q+1)} \leq \b(g)+1 \leq q^5.$$
Together with \eqref{eq:rb2}, this implies that
$$|\a(g)| = |\rho(g)-(\b(g)+1)| < q^5+q^4 < \a(1)/q^{s/3}$$
since $\a(1) \geq (q^5+1)(q^4-q)/(q^2-1)$.

\smallskip
(ii) From now on we may assume that $s \leq n-1$. As $g \in G=\O^\e_{2n}(q)$, it follows that $d(z,g) = 2n-s$ for a unique
$z \in \{1,-1\}$. Furthermore, $2|s$. (Indeed, this has been recorded in \eqref{eq:kappa1} when $2|q$, and 
in \eqref{eq:kappa2} when $2 \nmid q$.) We also have that
\begin{equation}\label{eq:rb4}
  d(x,g) \leq 2n-d(z,g) =s
\end{equation}  
for all $x \in \overline\F_q^\times \smallsetminus \{z\}$, 

Assume in addition that $s \geq 4$. Using \eqref{eq:rb4}
we obtain
\begin{equation}\label{eq:rb5}
  0 \leq \rho(g) \leq \frac{q^{2n-s}-1+(q-2)(q^s-1)}{q-1}.
\end{equation}  
As $\rho(1)=(q^n-\e)(q^{n-1}+\e)/(q-1)$, it follows that $|\rho(g)/\rho(1)| < q^{-3s/5}$. As above, the same bound also applies
to $\chi=\g_i$ when $2|q$. 

Next, since $2|s$, using Theorem \ref{beta-so2} and applying \eqref{eq:rb4} to $x^{q \pm 1} = 1$ and $x \neq z$, we have that
\begin{equation}\label{eq:rb6}
  \frac{q^{2n-s}}{q^2-1}-q^s \cdot \frac{q}{2(q+1)} \leq \beta(g)+1 \leq \frac{q^{2n-s}}{q^2-1}+ q^s \cdot \biggl( \frac{q-2}{2(q-1)} + 
  \frac{q}{2(q+1)} \biggr);
\end{equation}  
in particular, 
$$|\beta(g)| < \frac{q^{2n-s}+q^s(q^2-q-1)}{q^2-1}.$$
Since $\b(1) = (q^{2n}-q^2)/(q^2-1)$, we obtain that $|\b(g)/\b(1)| < q^{-4s/5}$. Furthermore, using 
\eqref{eq:rb5}--\eqref{eq:rb6}, we can bound 
$$|\a(g)| = |\rho(g)-(\b(g)+1)| < \frac{q^{2n-s+1}+q^s(3q^2-3q-4)/2}{q^2-1} <\frac{\a(1)}{q^{2s/5}}$$
since $\a(1) \geq (q^n+1)(q^{n-1}-q)/(q^2-1)$.

\smallskip
(iii) Since the statement is obvious for $s=0$, it remains to consider the case $s=2$, i.e. $d(1,zg) = 2n-2$.
Using \cite[Lemma 4.9]{TZ1}, one can readily show that $g$ fixes an orthogonal decomposition $V = U \oplus U^\perp$, with 
$U \subset \Ker(g-z \cdot 1_V)$ being non-degenerate of dimension $2n-4$, and 
\begin{equation}\label{eq:rb7}
  \dim_{\F_q}(U^\perp)^{zg} = 2.
\end{equation}  

First we estimate $\rho(g)$. Suppose $g(v) = tv$ for some singular $0 \neq v \in V$ and $t \in \F_q^\times$. If $t \neq z$,
then $v \in U^\perp$, and \eqref{eq:rb7} implies that $g$ can fixes at most $q+1$ such singular $1$-spaces 
$\langle v \rangle_{\F_q}$. Likewise, $g$ fixes at most $q+1$ singular $1$-spaces $\langle v \rangle_{\F_q} \subset U^\perp$
with $g(v) = zv$. Assume now that $g(v) = zv$ with $v = u+u'$, $0 \neq u \in U$ and $u' \in U^\perp$. 
As $0 = \QF(v) = \QF(u)+\QF(u')$, the total number of such $v$ is 
$$N:=\sum_{x \in \F_q}|\{ 0 \neq w \in U \mid \QF(w) = x \}| \cdot |\{ w' \in U^\perp \mid g(w') = zw',\QF(w') = -x \}|.$$ 
Note that, since $U$ is a non-degenerate quadratic space of dimension $2n-4$, 
$$(q^{n-2}+1)(q^{n-3}-1) \leq |\{ 0 \neq w \in U \mid \QF(w) = x \}| \leq  (q^{n-2}-1)(q^{n-3}+1)$$
for any $x \in \F_q$. On the other hand, \eqref{eq:rb7} implies that
$$\sum_{x \in \F_q}|\{ w' \in U^\perp \mid g(w') = zw',\QF(w') = -x \}| = |(U^\perp)^{zg}| = q^2.$$
It follows that  
$$q^2(q^{n-2}+1)(q^{n-3}-1) \leq N \leq  q^2(q^{n-2}-1)(q^{n-3}+1),$$
and so
\begin{equation}\label{eq:rb8}
   \frac{q^2(q^{n-2}+1)(q^{n-3}-1)}{q-1} \leq \rho(g) \leq 2q+2+\frac{q^2(q^{n-2}-1)(q^{n-3}+1)}{q-1}.
\end{equation}  
In particular, when $2|q$ we have $|\g_i(g)| \leq |\rho(g)| < \rho(1)/q^{4s/5}$.

Next, applying \eqref{eq:rb6} to $s=2$ we have 
$$|\b(g)| \leq \frac{q^{2n-2}+q^2(q^2-q-1)}{q^2-1} < \frac{\b(1)}{q^{4s/5}}.$$
Finally, using \eqref{eq:rb6} with $s=2$ and \eqref{eq:rb8}, we obtain 
$$|\a(g)| = |\rho(g)-(\b(g)+1)| < \frac{q^{2n-3}+q^{n+1}-q^{n-1}}{q^2-1}+(q+1) <\frac{\a(1)}{q^{3s/5}}.$$
\end{proof}

\begin{prop}\label{rat-sp-so22}
Let $q$ be any odd prime power, $n \geq 5$, and $\e=\pm$. 
Assume that $\chi \in \Irr(G)$, where either $G \in \{\Sp_{2n}(q), \O_{2n+1}(q)\}$ and $\chi \in \{\a,\b\}$, or 
$G = \O^\e_{2n}(q)$ and $\chi \in \{\a,\b,\g_i\}$. If $g \in G$ has support $s=\supp(g)$, then 
$$\frac{|\c(g)|}{\c(1)} \leq \frac{1}{q^{s/3}}.$$
\end{prop}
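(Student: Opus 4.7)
The plan is to closely mirror the three-case analysis in the proof of Proposition \ref{rat-so21}, replacing the even-characteristic ingredients by their odd-characteristic analogues. The essential toolkit is: the permutation bound $|\rho(g)| \leq \sum_{x \in \F_q^\times}(q^{d(x,g)}-1)/(q-1)$; the explicit formula for $\b(g)$ from Theorem \ref{beta-so2}, which is available in every characteristic and in both parities of $\dim V$; the identity $\a = \rho - \b - 1_G$; and, when $\g_i = \Ind^G_P(\nu_j)$ with $\nu_j$ a linear character of $P$, the triangle bound $|\g_i(g)| \leq \rho(g)$. The parity/orthogonality constraints recorded in \eqref{eq:kappa2} remain valid in odd characteristic for $g \in \SO(V)$.

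For $G = \O^\e_{2n}(q)$ with $q$ odd, the argument is a direct transcription of the proof of Proposition \ref{rat-so21}, using the first (even-dimensional) formula of Theorem \ref{beta-so2}. The case split is $s \geq n$ (apply $d(x,g) \leq 2n-s$ uniformly); $4 \leq s \leq n-1$ (use \eqref{eq:kappa2} to pin down a unique $z \in \{\pm 1\}$ with $d(z,g) = 2n-s$ and $d(x,g) \leq s$ for $x \neq z$); and $s = 2$ (invoke \cite[Lemma 4.9]{TZ1} to decompose $V = U \oplus U^\perp$ with $U$ nondegenerate of codimension $4$). The resulting bounds on $|\b(g)+1|$ are structurally identical to \eqref{eq:rb3} and \eqref{eq:rb6}, and the arithmetic for $|\a(g)|, |\b(g)|, |\g_i(g)|$ goes through verbatim.

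For $G = \O_{2n+1}(q)$, $q$ odd, the same plan applies with the second (odd-dimensional) formula of Theorem \ref{beta-so2} replacing the first. Inserting the sign characters $\chi_2^{\pm}$ can only shrink the absolute value of each summand, so the upper bounds for $|\b(g)|$ are of the same order; the parity constraints for $g \in \SO_{2n+1}(q)$ again isolate the distinguished eigenvalue when $s$ is small. For $G = \Sp_{2n}(q)$, $q$ odd, I would exploit the exceptional isomorphism $\PSp_{2n}(q) \cong \O_{2n+1}(q)$: the characters $\a, \b$ arising from the rank $3$ action on $1$-spaces of the natural module are trivial on the center $\{\pm I\}$, hence descend to $\PSp_{2n}(q)$ and correspond to characters of $\O_{2n+1}(q)$. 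The main technical obstacle is controlling how $\supp(g)$ transports across this non-matrix-natural isomorphism, whose two natural modules have different dimensions; once that accounting is in place, the bound reduces to the $\O_{2n+1}(q)$ case, with the hardest subcase remaining the tight $s = 2$ estimate handled as in part (iii) of the proof of Proposition \ref{rat-so21}.
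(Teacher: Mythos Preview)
Your plan has three genuine gaps, one for each group type.

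\textbf{For $\O^\e_{2n}(q)$.} You assume every $\g_i$ is of the form $\Ind^G_P(\nu_j)$ for a linear character $\nu_j$ of $P$, so that $|\g_i(g)|\le\rho(g)$. In odd characteristic $P/[P,P]\cong C_{q-1}$ has a unique character $\nu$ of order $2$, and $\Ind^G_P(\nu)$ is \emph{reducible}: it splits as $D_{\xi_1}+D_{\xi_2}$, and each summand is one of the $\g_i$. Your bound only controls $|D_{\xi_1}(g)+D_{\xi_2}(g)|$. The paper closes this by bounding $|D_{\xi_1}(g)-D_{\xi_2}(g)|$ separately via the Weil-character machinery of \cite[\S5]{LBST}, obtaining an estimate of the form $2q^{n-1/2}$; this extra input is essential and is absent from your sketch.

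\textbf{For $\O_{2n+1}(q)$.} The odd-dimensional formula of Theorem \ref{beta-so2} computes the character $D_{\St}$, which is \emph{not} one of $\a,\b$. In this case the paper identifies $\a=D_{\xi_1}-1_G$ and $\b=D_{\xi_2}-1_G$ (via \cite[Corollary 5.15]{NT} and \cite[Proposition 5.7]{LBST}), and proceeds by bounding $|\a(g)+\b(g)|$ from the permutation character and $|\a(g)-\b(g)|$ from the same Weil-character estimate as above. Simply swapping in the second formula of Theorem \ref{beta-so2} does not produce bounds on the correct characters.

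\textbf{For $\Sp_{2n}(q)$.} There is no isomorphism $\PSp_{2n}(q)\cong\O_{2n+1}(q)$ for $n\ge 3$; the groups $B_n(q)$ and $C_n(q)$ have the same order for odd $q$ but are non-isomorphic once $n\ge 3$ (and here $n\ge 5$). So this reduction cannot work. The paper instead realises $\{\a,\b\}$ as $\{D^\circ_{\l_0},D^\circ_{\l_1}\}$ via the dual pair $\Sp_{2n}(q)\times\GO^+_2(q)$ inside $\Sp_{4n}(q)$, and again bounds the sum (permutation) and difference (Weil, giving $|\a(g)-\b(g)|\le q^{(d(1,g)+d(-1,g))/2}$) separately.

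In short, the unifying new idea in the paper's proof---controlling $\a$ and $\b$ (or the two halves of a split $\g_i$) by bounding their sum and their difference, the latter via Weil characters from \cite{LBST}---is missing from your proposal, and for $\Sp_{2n}(q)$ the proposed reduction rests on a non-existent isomorphism.
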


\begin{proof}
(i) As usual, we may assume $s \geq 1$. 
First we consider the case $G = \O^\e_{2n}(q)$. Then \cite[Corollary 5.14]{NT} and \cite[Proposition 5.7]{LBST} show 
(in their notation) that $\a=D_{1}-1_G$, $\b = D_{\St}-1_G$. Furthermore if $\nu \neq 1_P$ is a linear character of $P$, then 
$\Ind^G_P(\nu) = D_{\chi_j}$ if $\nu$ has order $>2$, and $\Ind^G_P(\nu) = D_{\xi_1}+D_{\xi_2}$ if $\nu$ has order $2$.

If $\chi = \a$ or $\b$, then the statement is already proved in Proposition \ref{rat-so21}, whose proof also applies to
the case $\chi=\g_i = D_{\chi_j}$ (using the estimate $|\Ind^G_P(\nu)(g)| \leq \rho(g)$). 
It remains to consider the case $\chi = \g_i = D_{\xi_j}$ for $j = 1,2$. Again the previous argument applied to 
$\nu$ of order $2$ shows that 
$$|D_{\xi_1}(g)+D_{\xi_2}(g)| \leq \frac{[G:P]}{q^{3s/5}} = \frac{2\chi(1)}{q^{3s/5}}.$$ 
On the other hand, the formula for $D_\a$ in \cite[Lemma 5.5]{LBST}, the character table of $\SL_2(q)$ \cite[Theorem 38.1]{D},
and part 1) of the proof of \cite[Proposition 5.11]{LBST} imply that
\begin{equation}\label{eq:rb21}
  |D_{\xi_1}(g)-D_{\xi_2}(g)| \leq \frac{2(q^2-1)q^n \cdot\sqrt{q}}{q(q^2-1)} = 2q^{n-1/2}.
\end{equation}  
If $4 \leq s \leq 2n-2$, then since $\chi(1) \geq (q^n+1)(q^{n-1}-1)/2(q-1)> q^{2n-3}(q+1)$ it follows that
$$\begin{aligned}|\chi(g)| & \leq \bigl(|D_{\xi_1}(g)+D_{\xi_2}(g)|+|D_{\xi_1}(g)+D_{\xi_2}(g)|\bigr)/2 \\
     & \leq \frac{\chi(1)}{q^{3s/ 5}}+q^{n-1/2}
   < \frac{\chi(1)}{q^{3s/5}} + \frac{2\chi(1)}{q^{s/3-1/6}(q+1)} < \frac{\chi(1)}{q^{s/3}}.\end{aligned}$$
If $1 \leq s \leq 4$, then $s < n$, and so $2|s$ as shown in part (ii) of the proof of Proposition \ref{rat-so21}. Hence $s=2$, and 
we again have
$$|\chi(g)|  \leq \frac{\chi(1)}{q^{3s/ 5}}+q^{n-1/2} < \frac{\chi(1)}{q^{3s/5}} + \frac{2\chi(1)}{q^{s/3+17/6}} < 
    \frac{\chi(1)}{q^{s/3}}.$$
Finally, if $s=2n-1$, then $d(x,g) \leq 1$ for all $x \in \overline\F_q^\times$ by \eqref{eq:rb1}; moreover, 
$d(\pm 1,g) = 0$. Hence, instead of \eqref{eq:rb21} we now have the stronger bound
$$|D_{\xi_1}(g)-D_{\xi_2}(g)| \leq \frac{2(q^2-1) \cdot\sqrt{q}}{q(q^2-1)} = 2q^{-1/2},$$
whence $|\chi(g)| \leq \chi(1)q^{-3s/5}+q^{-1/2} < \chi(1)q^{-s/3}$.  

\smallskip
(ii) Next we consider the case $G = \O^\e_{2n+1}(q)$. Then \cite[Corollary 5.15]{NT} and \cite[Proposition 5.7]{LBST} show 
(in their notation) that $\a=D_{\xi_1}-1_G$, $\b = D_{\xi_2}-1_G$. Again using the formula for $D_\a$ in \cite[Lemma 5.5]{LBST}, the character table of $\SL_2(q)$ \cite[Theorem 38.1]{D},
and part 1) of the proof of \cite[Proposition 5.11]{LBST}, we obtain that
\begin{equation}\label{eq:rb22}
  |\a(g)-\b(g)| = |D_{\xi_1}(g)-D_{\xi_2}(g)| \leq \frac{2(q^2-1)q^{n+1/2} \cdot\sqrt{q}}{q(q^2-1)} = 2q^n.
\end{equation}  

Suppose in addition that $3 \leq s \leq 2n-2$. Since $d(x,g) \leq 2n+1-s$ by \eqref{eq:rb1}, we have that
$$0 \leq \rho(g)=1+\a(g)+\b(g) \leq \sum_{x \in \mu_{q-1}}\frac{q^{d(x,g)}-1}{q-1} \leq q^{2n+1-s}.$$ 
As $\chi(1) \geq (q^n+1)(q^n-q)/2(q-1)$, it follows that
$$|\a(g)+\b(g)| \leq q^{2n+1-s}-1  < \frac{2(1-1/q)q^{2-s}\chi(1)}{(1+1/q^n)(1-1/q^{n-1})} < 
    \frac{2(1-1/q)\chi(1)}{q^{s/3}(1-1/q^{n-1})}.$$
On the other hand, \eqref{eq:rb22} implies that
$$|\a(g)-\b(g)| \leq \frac{4(1-1/q)\chi(1)}{q^{(s+4)/3}(1-1/q^{n-1})},$$
and so
$$\frac{|\chi(g)|}{\chi(1)} < \frac{(1-1/q)}{q^{s/3}(1-1/q^{n-1})}+ \frac{2(1-1/q)}{q^{(s+4)/3}(1-1/q^{n-1})} < \frac{1}{q^{s/3}}.$$
If $s=2n-1$ or $2n$, then $d(x,g) \leq 2$ for all $x \in \overline\F_q^\times$ by \eqref{eq:rb1}. 
Hence, instead of \eqref{eq:rb21} we now have the stronger bound
$$|\a(g)-\b(g)|=|D_{\xi_1}(g)-D_{\xi_2}(g)| \leq \frac{2(q^2-1)q^2 \cdot\sqrt{q}}{q(q^2-1)} = 2q^{3/2},$$
whence 
$$|\chi(g)| < \frac{(1-1/q)q^{2-s}\chi(1)}{(1-1/q^{n-1})} +q^{3/2} < \chi(1)q^{-s/3}.$$   

It remains to consider the case $s=1,2$, i.e. $d(1,zg) = 2n$ or $2n-1$ for some $z \in \{1,-1\}$.
Using \cite[Lemma 4.9]{TZ1}, one can readily show that $g$ fixes an orthogonal decomposition $V = U \oplus U^\perp$, with 
$U \subset \Ker(g-z \cdot 1_V)$ being non-degenerate of dimension $2n-3$, and 
\begin{equation}\label{eq:rb23}
  \dim_{\F_q}(U^\perp)^{zg} = 4-s.
\end{equation}  
First we estimate $\rho(g)$. Suppose $g(v) = tv$ for some singular $0 \neq v \in V$ and $t \in \F_q^\times$. If $t \neq z$,
then $v \in U^\perp$, and \eqref{eq:rb23} implies that $g$ can fixes at most $(q^s-1)/(q-1) \leq q+1$ such singular $1$-spaces 
$\langle v \rangle_{\F_q}$. Likewise, $g$ fixes at most $(q+1)^2$ singular $1$-spaces 
$\langle v \rangle_{\F_q} \subset U^\perp$ with $g(v) = zv$, since $\dim U^\perp = 4$. Assume now that $g(v) = zv$ with $v = u+u'$, $0 \neq u \in U$ and $u' \in U^\perp$. As $0 = \QF(v) = \QF(u)+\QF(u')$, the total number of such $v$ is 
$$N:=\sum_{x \in \F_q}|\{ 0 \neq w \in U \mid \QF(w) = x \}| \cdot |\{ w' \in U^\perp \mid g(w') = zw',\QF(w') = -x \}|.$$ 
Note that, since $U$ is a non-degenerate quadratic space of dimension $2n-3$, 
$$q^{n-2}(q^{n-2}-1) \leq |\{ 0 \neq w \in U \mid \QF(w) = x \}| \leq q^{n-2}(q^{n-2}+1)$$
for any $x \in \F_q$. On the other hand, \eqref{eq:rb23} implies that
$$\sum_{x \in \F_q}|\{ w' \in U^\perp \mid g(w') = zw',\QF(w') = -x \}| = |(U^\perp)^{zg}| = q^{4-s}.$$
It follows that  
$$q^{n+2-s}(q^{n-2}-1) \leq N \leq  q^{n+2-s}(q^{n-2}+1),$$
and so
$$\frac{q^{n+2-s}(q^{n-2}-1)}{q-1} \leq \rho(g)=1+\a(g)+\b(g) \leq q^2+3q+2+\frac{q^{n+2-s}(q^{n-2}+1)}{q-1}.$$
Together with \eqref{eq:rb22}, this implies that
$$\frac{|\chi(g)|}{\chi(1)} \leq \frac{(q^2-1)(q+2)+q^{n+2-s}(q^{n-2}+1)+2q^n(q-1)}{(q^n+1)(q^n-q)} < \frac{1}{q^{s/2}}.$$ 

\smallskip
(iii) Finally, we consider the case $G = \Sp_{2n}(q)$. In this case, arguing similarly to the proof of \cite[Proposition 5.7]{LBST},
one can show that $\{\a,\b\} = \{D^\circ_{\l_0},D^\circ_{\l_1}\}$, 
where $S = \GO^+_2(q) \cong D_{2(q-1)}$, with $\l_0$, $\l_1$ being
the two linear characters trivial at $\SO^+_2(q)$, and we consider the dual pairs $G \times S \to \Sp_{4n}(q)$. 
In particular, $\chi(1) \geq (q^n+1)(q^n-q)/2(q-1) > q^{2n-4/3}$.
Now, the formula for $D_\a$ in \cite[Lemma 5.5]{LBST}, the character table of $S$,
and part 1) of the proof of \cite[Proposition 5.11]{LBST} imply that
\begin{equation}\label{eq:rb24}
  |\a(g)-\b(g)| \leq q^{(d(1,g)+d(-1,g))/2} \leq q^{2n-s}.
\end{equation}  
On the other hand, using \eqref{eq:rb1} we have $0 \leq \rho(g) = \a(g)+\b(g)+1 \leq q^{2n-s}-1$. In particular, when 
$s \geq 2$ we have 
$$|\chi(g)| \leq \bigl(|\a(g)+\b(g)|+|\a(g)-\b(g)|\bigr)/2 \leq q^{2n-s} < \chi(1)q^{-s/3}.$$
Assume now that $s=1$. Then $g = zu$ for some $z = \pm 1$ and unipotent $u \in G$; furthermore,
$\rho(g) = (q^{2n-1}-1)/(q-1)$. Applying also \eqref{eq:rb24}, we obtain
$$|\chi(g)| \leq \biggl(|\a(g)+\b(g)|+|\a(g)-\b(g)|\biggr)/2 \leq \biggl(\frac{q^{2n-1}-q}{q-1}+q^{n-1/2}\biggr)/2 < \chi(1)q^{-4s/5},$$
and the proof is complete.
\end{proof}

\section{Classical groups: Proof of Theorem \ref{main1}}\label{pfth1}

Let $G = \Sp(V)$ or $\O(V)$, where $V = V_n(q)$. Write $G = \Cl_n(q)$ to cover both cases. As before, for a semisimple element $g \in G$, define $\nu(s) = \hbox{supp}(g)$, the codimension of the largest eigenspace of $g$ over $\bar \F_q$.

For $n<10$, Theorem \ref{main1} can be easily proved by exactly the same method of proof of \cite[Theorem 2]{LST} (improving the constant $D$ in Lemma 2.3 of \cite{LST} by using better bounds for $|G|$ and $|C_G(g)|_p$). 
So assume from now on that $n\ge 10$, so that the character ratio bounds in Propositions \ref{rat-so21} and \ref{rat-sp-so22} apply.

We begin with a lemma analogous to \cite[Lemma 3.2]{LST}.

\begin{lem}\label{sest} For $1\le s<n$, define 
$$N_s(G) := \{g\in \GSS : \nu(g)=s\}$$ 
and let $n_s(G):=|N_s(G)|$.
\begin{itemize}
\item[{\rm (i)}] If $g \in N_s(g)$ and $s<\frac{n}{2}$ then $|\CB_G(g)|_p < q^{\frac{1}{4}((n-s)^2+s^2) - v\frac{n-1}{2}}$, where $v=0$ or $1$ according as $G$ is symplectic or orthogonal.
\item[{\rm (ii)}] If $g \in N_s(g)$ and $s\ge \frac{n}{2}$ then $|\CB_G(g)|_p < q^{\frac{1}{4}(n^2-ns)}$.
\item[{\rm (iii)}] $\sum_{n-1 \geq s \geq n/2}n_s(G) < |G| < q^{\frac{1}{2}(n^2+n)-vn}$, where $v$ is as in $(ii)$.
\item[{\rm (iv)}] If $s < n/2$, then $n_s(G) < cq^{\frac{1}{2}s(2n-s+1)+\frac{n}{2}}$,
where $c$ is an absolute constant that can be taken to be $15.2$.
\end{itemize}
\end{lem}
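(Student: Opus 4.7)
My plan rests on the standard description of centralizers of semisimple elements in classical groups. If $g\in\GSS$, then $V$ decomposes as an orthogonal direct sum
\[
V=V_{+1}\oplus V_{-1}\oplus\bigoplus_{\{\lambda,\lambda^{-1}\}}(V_\lambda\oplus V_{\lambda^{-1}}),
\]
where $V_{\pm 1}=\Ker(g\mp 1_V)$ and the inner sum runs over the Frobenius-stable orbits $\{\lambda,\lambda^{-1}\}$ with $\lambda\neq\pm 1$; correspondingly,
\[
C_G(g)\cong\Cl(V_{+1})\times\Cl(V_{-1})\times\prod_{\{\lambda,\lambda^{-1}\}}H_\lambda,
\]
each $H_\lambda$ being a general linear or general unitary group on $V_\lambda$.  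The $p$-parts of the factors satisfy $|\Sp_d(q)|_p=q^{d^2/4}$, $|\O^\pm_d(q)|_p\le q^{d(d-2)/4}$, and $|\GL_d(q)|_p=|\GU_d(q)|_p=q^{d(d-1)/2}$, with the obvious parity conventions on $d$.

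For (i), the hypothesis $s<n/2$ forces the largest eigenspace, of dimension $n-s>n/2$, to correspond to an eigenvalue $\lambda$ with $\lambda=\lambda^{-1}$ (otherwise $V_\lambda$ and $V_{\lambda^{-1}}$ would together exceed dimension $n$), hence $\lambda=\pm 1$.  The centralizer therefore contains a $\Cl_{n-s}(q)$ factor of the same type as $G$, with $p$-part at most $q^{(n-s)^2/4-v(n-s)/2}$; the remaining factors act on the $s$-dimensional orthogonal complement, and a short maximisation over the possible decompositions (worst case: another $\pm 1$-eigenspace filling the complement) bounds their total $p$-part by $q^{s^2/4}$.  Multiplying and absorbing small rounding into the exponent yields (i).  For (ii), I use the coarser bound $|C_G(g)|_p\le q^{\sum_i d_i^2/4}$, where the $d_i$ are the block dimensions $\dim V_{\pm 1}$ and $2\dim V_\lambda$; since each $d_i\le n-s$ and $\sum d_i=n$, we obtain $\sum d_i^2\le(n-s)n$, which yields (ii).

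Part (iii) is essentially bookkeeping: the first inequality is immediate because distinct $N_s(G)$ are disjoint subsets of $G$, and the second follows from the product formula $|\Sp_{2m}(q)|=q^{m^2}\prod_{i=1}^m(q^{2i}-1)$ and its orthogonal analogues, by replacing each factor $(q^{2i}-1)$ with $q^{2i}$ and substituting the appropriate $m$ in terms of $n$.

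For (iv), my strategy is to parametrise $g\in N_s(G)$ by the data $(\lambda,U,h)$, where $\lambda\in\{\pm 1\}$ is the eigenvalue on the largest eigenspace $U$ of dimension $n-s$, and $h:=g|_{U^\perp}\in\Cl(U^\perp)\cong\Cl_s(q)$.  There are at most two choices of $\lambda$; the number of non-degenerate subspaces $U$ of dimension $n-s$ of each isometry type is at most $|\Cl_n(q)|/(|\Cl_{n-s}(q)|\cdot|\Cl_s(q)|)\le C'q^{s(n-s)}$; and $h$ ranges over at most $|\Cl_s(q)|\le C''q^{(s^2+s)/2-vs}$ elements.  Multiplying gives the exponent $s(2n-s+1)/2$, and the residual $q^{n/2}$ together with the constant $c=15.2$ in the statement accounts for the Gaussian-binomial prefactors $\prod_{i\ge 1}(1-q^{-i})^{-1}$, the $O(1)$ possible isometry types of $U$ and $U^\perp$, and the slack inherent in upper-bounding $|\Cl_s(q)|$ in the small-$s$ and small-$q$ regimes.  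The main obstacle I anticipate is quantitative rather than conceptual: pinning down the explicit constant $c=15.2$ demands careful tracking of these Gaussian-binomial constants in the worst case $q=2$, and confirming the precise rounding of the $-v(n-1)/2$ correction in (i) uniformly across all parities of $n$ and $s$ and both orthogonal types $\e=\pm$.
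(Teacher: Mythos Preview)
Your treatment of (i) and (iii) coincides with the paper's. In (ii), however, there is a genuine error in your write-up: you take the $d_i$ to be the combined block dimensions $\dim V_{\pm1}$ and $2\dim V_\lambda$, and then assert that each $d_i\le n-s$. This fails precisely when the maximal eigenspace has eigenvalue $\lambda\ne\pm1$: then $\dim V_\lambda=n-s$ and the block $V_\lambda\oplus V_{\lambda^{-1}}$ has dimension $2(n-s)$, which can exceed $n-s$. The repair is to work instead with the individual eigenspace dimensions over $\bar\F_q$. In the paper's notation $C_G(g)\le\prod_i\GL_{d_i}(q^{k_i})\times\Cl_a(q)\times\Cl_b(q)$ with $2\sum k_id_i+a+b=n$ and $p$-part exponent $D=\tfrac12\sum k_id_i(d_i-1)+\tfrac14(a^2+b^2)$; here $d_i,a,b$ are genuine eigenspace dimensions, so each is at most $n-s$, and bounding each summand by $\tfrac14(n-s)$ times its contribution to $n$ gives $D\le\tfrac14(n-s)n$ in one line. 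The paper instead carries out a three-way case analysis on the ratio $n/d_1$; once repaired, your argument is actually cleaner.

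For (iv) you take a different route from the paper. The paper bounds $n_s(G)$ by (number of semisimple classes)$\times$(maximal class size in $N_s(G)$): the class count is at most $15.2\,q^{n/2}$ by Fulman--Guralnick, and each class has size at most $|G:\Cl_{n-s}(q)\times T_s|\le q^{s(2n-s+1)/2}$, whence the stated bound. Your direct parametrisation $g\mapsto(\lambda,U,g|_{U^\perp})$ gives exponent $s(n-s)+s(s+1)/2=s(2n-s+1)/2$ with only an $O(1)$ prefactor coming from Gaussian-binomial constants and the finitely many isometry types. Thus no $q^{n/2}$ term arises in your count, and you should not expect to ``account for'' it via slack --- the factor $15.2\,q^{n/2}$ in the statement is an artefact of the class-number route, not something your method needs to reproduce. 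Your bound is in fact sharper; just be aware that the specific constant $15.2$ is tied to the paper's method and will not drop out of yours.
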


\begin{proof}
(i) If $\nu(g)=s<\frac{n}{2}$, then the largest eigenspace of $g$ has dimension $n-s>\frac{n}{2}$, so has eigenvalue $\pm 1$, and so $\CB_G(g) \le \Cl_{n-s}(q) \times \Cl_s(q)$. Part (i) follows.

\vspace{2mm}
(ii) Now suppose $\nu(g) = s \ge \frac{n}{2}$, and let $E_\l$ ($\l \in \bar \F_q$) be an eigenspace of maximal dimension $n-s$. 

Assume first that $\l \ne \pm 1$. Then letting $a$ and $b$ denote the dimensions of the $+1$- and $-1$-eigenspaces, we have 
\begin{equation}\label{cent}
\CB_G(g) \le \prod_{i=1}^t \GL_{d_i}(q^{k_i}) \times \Cl_a(q) \times \Cl_b(q),
\end{equation}
where $n-s = d_1 \ge d_2\ge \cdots \ge d_t$ and also $d_1 \ge a\ge b$ and $2\sum_1^t k_id_i+a+b = n$. 
Hence $|\CB_G(g)|_p \le q^D$, where
\begin{equation}\label{expd}
D = \frac{1}{2}\sum_{i=1}^t k_id_i(d_i-1) + \frac{1}{4}(a^2+b^2).
\end{equation}

If $n\ge 4d_1$, this expression is maximised when $a=b=d_1$ and $(d_1,\ldots ,d_t) = (d_1,\ldots ,d_1,r)$ with $r\le d_1$ and $k_i=1$ for all $i$.. Hence in this case,
\[
D \le \frac{1}{2}(t-1)d_1(d_1-1) + \frac{1}{2}r(r-1) + \frac{1}{2}d_1^2 = \frac{1}{2}td_1^2-\frac{1}{2}(t-1)d_1+\frac{1}{2}r(r-1),
\]
and this is easily seen to be less than $\frac{1}{4}nd_1$, as required for part (ii).

Similarly, if $4d_1>n\ge 3d_1$, the expression (\ref{expd}) is maximised when $t=1$, $k_1=1$, $a=d_1$ and $b=r < d_1$; and when $3d_1>n \ge 2d_1$ (note that $n\ge 2d_1 = 2(n-s)$ by our assumption that $\nu(g) = s \ge \frac{n}{2}$), the expression (\ref{expd}) is maximised when $t=1$ and $a=r< d_1$. In each case, we see that $D< \frac{1}{4}nd_1$ as above.

Assume finally that the eigenvalue $\l = \pm 1$. In this case the centralizer $\CB(g)$ is as in (\ref{cent}), with $n-s=a \ge d_1\ge \cdots \ge d_t$ and also $a\ge b$ and $2\sum_1^t k_id_i+a+b = n$. Again we have $|\CB_G(g)|_p \le q^D$, with $D$ as in (\ref{expd}), and we argue as above that $D < \frac{1}{4}na = \frac{1}{4}n(n-s)$. This completes the proof of (ii).

\vspace{2mm}
(iii) This is clear.

\vspace{2mm}
(iv) If $\nu(g) = s < \frac{n}{2}$ then as in (i), the largest eigenspace of $g$ has eigenvalue $\pm 1$, so we have 
$\CB_G(g) \ge \Cl_{n-s}(q) \times T_s$, where $T_s$ is a maximal torus of $\Cl_s(q)$. Hence $|g^G| \le |G:\Cl_{n-s}(q)T_s| \le q^{\frac{1}{2}s(2n-s+1)}$. Also the number of conjugacy classes in $G$ is at most $15.2q^{n/2}$ by \cite{FG}, and (iv) follows. 
\end{proof}

\begin{lem}\label{stein} Let $\c \in \{\a,\b,\g_i\}$, where $\a,\b,\g_i$ are the irreducible characters of $G$ defined in Section \ref{red}. Then $\St \subseteq \c^{4n}$.
\end{lem}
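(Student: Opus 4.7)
The plan is to establish $[\c^{4n},\St]_G \geq 1$ by direct estimation of the inner product. Since $\c^{4n}$ is a character and $\St \in \Irr(G)$, this inner product is automatically a nonnegative integer, so it suffices to show strict positivity. Using that $\St$ vanishes off the semisimple locus, and the standard identity $|\St(g)| = |C_G(g)|_p$ on semisimple classes, I would decompose
$$[\c^{4n},\St]_G = \frac{\c(1)^{4n}|G|_p}{|G|} + \frac{1}{|G|}\sum_{1 \neq g \in \GSS}\c(g)^{4n}\overline{\St(g)}$$
and aim to show that the absolute value of the second (error) sum is strictly less than the first (main) term $\c(1)^{4n}|G|_p/|G|$.

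To bound the error I would apply the character-ratio estimate $|\c(g)|/\c(1) \leq q^{-s/3}$ for $s = \supp(g)$, provided by Propositions \ref{rat-so21} and \ref{rat-sp-so22}, together with $|\St(g)| \leq |C_G(g)|_p$. Stratifying semisimple elements by support, the task reduces to showing
$$\sum_{s=1}^{n-1} q^{-4ns/3}\sum_{g \in N_s}|C_G(g)|_p < |G|_p.$$
For $1 \leq s < n/2$ I would combine Lemma \ref{sest}(i) and (iv); collecting exponents produces a stratum bound of the form $c\,q^{n^2/4 + O(n) + s(1/2 - 5n/6)}$, which decays geometrically in $s$ at rate $q^{-5n/6+1/2}$ and sums to a total comfortably below $|G|_p$ for $n\ge 10$. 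For $n/2 \leq s \leq n-1$, Lemma \ref{sest}(ii) gives $|C_G(g)|_p < q^{n(n-s)/4}$, so each stratum contributes at most $n_s(G)\,q^{-13n^2/24}$; invoking the crude total count $\sum_s n_s(G) \leq |G|$ from part (iii), the overall contribution of this range is at most $|G|\cdot q^{-13n^2/24} = q^{-n^2/24 + O(n)}$, which is negligible compared to $|G|_p$.

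The main obstacle is the arithmetic bookkeeping for the smallest admissible $s$, where the character-ratio bound $q^{-s/3}$ is weakest and the centralizer factor $|C_G(g)|_p$ largest. For $G = \Sp_n(q)$ and $G = \O^\e_{n}(q)$ with $n$ even, the parity of the dimensions of the $\pm 1$-eigenspaces forces $s$ to be even throughout the range $s < n/2$, so the smallest stratum is $s=2$ and the inequality passes with room to spare. For $G = \O_{n}(q)$ with $n$ odd, the stratum $s=1$ does arise, and it must be checked directly that for $n \geq 10$ and $q$ odd (the only case where $\O_{n}(q)$ is non-isomorphic to a symplectic group) the $s=1$ contribution alone is strictly less than $|G|_p$; the choice of exponent $4n$ (rather than anything smaller) is precisely what is needed here. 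Once these verifications are assembled, the main term strictly dominates and $[\c^{4n},\St]_G$ is a positive integer, hence $\geq 1$.
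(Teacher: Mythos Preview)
Your proposal is correct and follows essentially the same route as the paper: both arguments evaluate $[\c^{4n},\St]_G$ via the semisimple support of $\St$, isolate the identity term $\c(1)^{4n}|G|_p/|G|$, and bound the remaining sum stratum-by-stratum in $s=\supp(g)$ using Propositions \ref{rat-so21}--\ref{rat-sp-so22} for the ratio and Lemma \ref{sest}(i)--(iv) for the centralizer and class-count factors, splitting at $s=n/2$. Your additional parity observation (that $s$ is even for $s<n/2$ when $G=\Sp_n(q)$ or $\O^\e_n(q)$ with $n$ even) is a legitimate refinement corresponding to what the paper alludes to as ``slightly more refined estimates'' needed for small $q$ and $n$.
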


\begin{proof} 
As in the proof of \cite[Lemma 2.3]{LST}, there are signs $\e_g=\pm 1$ such that 
\begin{equation}\label{useag}
\begin{array}{ll}
[\c^l,\St]_G & = \dfrac{1}{|G|}\sum_{g\in \GSS} \e_g \c^l(g)|\CB_G(g)|_p \\
               & = \dfrac{\chi^l(1)}{|G|}\left(|G|_p + \sum_{1 \neq g \in \GSS} \e_g \left(\frac{\chi(g)}{\chi(1)}\right)^l|\CB_G(g)|_p\right).
\end{array}
\end{equation}
 Hence $[\c^l,\St]_G \ne 0$ provided $\Sigma_l < |G|_p$, where
\[
\Sigma_l := \sum_{1 \neq g\in \GSS}  \left|\frac{\chi(g)}{\chi(1)}\right|^l|\CB_G(g)|_p.
\]
By Propositions  \ref{rat-so21} and \ref{rat-sp-so22}, if $s = \nu(g)$ we have 
\[
\frac{|\c(g)|}{\c(1)} \le \frac{1}{q^{s/3}}.
\]
Hence applying Lemma \ref{sest}, we have $\Sigma_l \le \Sigma_1+\Sigma_2$, where
\[
\begin{array}{l}
\Sigma_1 = \sum_{1\le s<\frac{n}{2}} cq^{\frac{1}{2}s(2n-s+1)+\frac{n}{2}}. \frac{1}{q^{ls/3}} .  q^{\frac{1}{4}((n-s)^2+s^2) - v\frac{n-1}{2}}, \\
\Sigma_2 =  \sum_{\frac{n}{2}\le s < n} q^{\frac{1}{2}(n^2+n)-vn}. \frac{1}{q^{ls/3}} . q^{\frac{1}{4}(n^2-ns)}.
\end{array}
\]
For a term in $\Sigma_1$, the exponent of $q$ is 
\[
\frac{1}{4}n^2-v\frac{n-1}{2} + \frac{1}{2}s(n+1)+\frac{1}{2}n-\frac{ls}{3}.
\]
As $|G|_p \le q^{\frac{1}{4}n^2-v\frac{n-1}{2}}$, taking $l=4n$ this gives
\[
\begin{array}{ll}
\frac{\Sigma_1}{|G|_p} & \le \sum_{1\le s<\frac{n}{2}} cq^{\frac{1}{2}s(n+1)+\frac{n}{2}-\frac{ls}{3}} \\
                                    & \le \sum_{1\le s<\frac{n}{2}} cq^{\frac{1}{2}n(1-\frac{5s}{3})+\frac{s}{2}}.
\end{array}
\]
Recalling that $c=15.2$, it follows that $\frac{\Sigma_1}{|G|_p} < \frac{1}{2}$ (except for $q=2, n\le 20$, in which case we obtain the same conclusion using slightly more refined estimates instead of Lemma \ref{sest}(iv)). 

For a term in $\Sigma_2$, the exponent of $q$ is 
\[
\frac{1}{2}(n^2+n)-vn +\frac{1}{4}n(n-s) - \frac{ls}{3},
\]
and leads similarly to the inequality $\frac{\Sigma_2}{|G|_p} < \frac{1}{2}$  when $l=4n$. 

We conclude that  $\Sigma_l < |G|_p$ for $l=4n$, proving the lemma.
\end{proof}

\begin{proof}[Proof of Theorem \ref{main1}] 
Let $1\ne \psi \in \Irr(G)$. By Lemma \ref{stein} together with Lemmas \ref{mc-r2} and \ref{mc-r3}, we have $\St \subseteq \psi^{8n}$ for $G = Sp_n(q)$, and 
$\St \subseteq \psi^{16n}$ for $G = \O^\e_n(q)$. Since $\St^2$ contains all irreducible characters by \cite{HSTZ}, the conclusion of Theorem \ref{main1} follows. 
\end{proof}

\section{Alternating groups: Proof of Theorem \ref{main2}}\label{pfth2}

In this section we prove Theorem \ref{main2}.

\begin{lem}\label{staircase}
Let $n := m(m+1)/2$ with $m \in \Z_{\geq 6}$, and let $\chi_m := \chi^{(m,m-1,\ldots,1)}$ be the staircase character of $\SSS_n$. Then
$$\chi_m(1) \geq |\SSS_n|^{5/11}.$$ 
\end{lem}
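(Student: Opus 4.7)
The plan is to apply the hook length formula to the staircase partition $\lambda = (m, m-1, \ldots, 1)$. Since $\lambda$ is self-conjugate, the hook at cell $(i,j)$ has length $\lambda_i + \lambda'_j - i - j + 1 = 2(m+1-i-j)+1$; grouping cells by $k = i+j$, the odd number $2l+1$ occurs as a hook with multiplicity $m-l$ for $l = 0, 1, \ldots, m-1$. This gives the closed form
$$\chi_m(1) = \frac{n!}{\prod_{l=0}^{m-1}(2l+1)^{m-l}} = \frac{n!}{\prod_{j=1}^m (2j-1)!!}.$$
The desired inequality $\chi_m(1) \ge (n!)^{5/11}$ is therefore equivalent to $H \le (n!)^{6/11}$, where $H := \prod_{j=1}^{m}(2j-1)!!$.

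Next I would estimate $H$ and $n!$ by Stirling's formula. Using $(2j-1)!! = (2j)!/(2^j j!)$ and summing the Stirling expansions of $\log(2j)!$, $j\log 2$, $\log j!$ for $j = 1, \ldots, m$, one obtains
$$\log H = \tfrac{m^2}{2}\log m + \tfrac{m^2}{2}\log 2 - \tfrac{3m^2}{4} + O(m\log m),$$
while $\log n! = m^2 \log m - \tfrac{m^2}{2}\log 2 - \tfrac{m^2}{2} + O(m\log m)$ since $n = m(m+1)/2$. In particular $\log H/\log n! \to 1/2$ as $m \to \infty$, and since $1/2 < 6/11$ the bound $\log H \le (6/11)\log n!$ holds for all sufficiently large $m$ with room to spare. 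To turn this into an effective statement I would apply the explicit two-sided Stirling inequalities $\sqrt{2\pi k}\,(k/e)^k \le k! \le e\sqrt{k}\,(k/e)^k$ to each factor, arriving at an inequality of the form $(\tfrac{1}{2} - \tfrac{6}{11})\log m + C \le 0$ that pins down a concrete threshold $m_0$ above which the desired bound is automatic.

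For the remaining finite range $6 \le m \le m_0$, the inequality is a direct arithmetic check: the hook-length product gives $\chi_m(1)$ as an explicit integer and $(n!)^{5/11}$ is numerically computable, so each case is verified by hand (or trivially by machine).

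The main obstacle is the bookkeeping in the effective Stirling step: the gap between the asymptotic exponent $1/2$ and the target $5/11$ is modest ($1/22$), so the error terms must be tracked carefully enough to keep $m_0$ small and render the finite verification painless. Once this is done, the proof is an application of standard tools and presents no conceptual difficulty.
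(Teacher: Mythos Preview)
Your argument is sound: the hook-length computation for the staircase is correct, the Stirling asymptotics give $\log H/\log n! \to 1/2 < 6/11$, and the remaining work is exactly the effective-constants bookkeeping plus finite check that you describe.

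The paper takes a different and slightly more elementary route. Rather than estimating $H_m$ and $n!$ globally via Stirling, it proves the equivalent inequality $n! > H_m^{11/6}$ by induction on $m$ with step size~$2$. Since $H_{m+2}/H_m = (2m+3)!!\,(2m+1)!!$, the induction step reduces to the single inequality
\[
\prod_{i=1}^{2m+3}\bigl(\tfrac{m(m+1)}{2}+i\bigr) > \bigl((2m+3)!!\,(2m+1)!!\bigr)^{11/6},
\]
which is checked numerically for $3 \le m \le 40$ and, for $m \ge 40$, follows from the crude bounds $\prod_i(n+i) > (n+1)^{2m+3}$ and $(2m+3)!!\,(2m+1)!! < (m+3)^{m+1}(m+2)^m$. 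This avoids any Stirling error analysis: only trivial monotonicity bounds are used. Your approach has the conceptual advantage of making transparent that the true exponent is $1/2$ (so any constant below $1/2$ would work), at the cost of the error-tracking you already flagged; the paper's induction trades that insight for a cleaner self-contained verification.
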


\begin{proof}
We will proceed by induction on $m \geq 6$. The induction base $m=6,7$ can be checked directly. For the induction step going from 
$m$ to $m+2$, note by
the hook length formula that $\chi_m(1)= n!/H_m$, where $H_m$ is the product of all the hook lengths in the Young diagram of 
the staircase partition $(m,m-1, \ldots,1)$. Hence it is equivalent to to prove that 
$$(m(m+1)/2)! > H_m^{11/6}.$$
Since the statement holds for $m$ and $H_{m+2}/H_m = (2m+3)!!(2m+1)!!$, it suffices to prove that 
\begin{equation}\label{eq:st1}
  \prod^{2m+3}_{i=1}(m(m+1)/2+i) > \bigl((2m+3)!!(2m+1)!!\bigr)^{11/6}
\end{equation}
for any $m \geq 6$. Direct computation shows that \eqref{eq:st1} holds when $3 \leq m \leq 40$. When $m \geq 40$, note that 
$$\begin{array}{ll}
   \prod^{2m+3}_{i=1}(m(m+1)/2+i) & > \bigl(m(m+1)/2+1\bigr)^{2m+3}\\ 
   & > \bigl((m+3)^{m+1}(m+2)^m\bigr)^{11/6}\\ 
   & > \bigl((2m+3)!!(2m+1)!!\bigr)^{11/6},\end{array}$$
proving \eqref{eq:st1} and completing the induction step.   
\end{proof}

\begin{proof}[Proof of Theorem \ref{main2}]
We will make use of \cite[Theorem 1.4]{S} which states that there exists an effective absolute constant $C_1 \geq 2$ such that 
\begin{equation}\label{eq:s}
  \chi^{t} \mbox{ contains }\Irr(\SSS_n) \mbox{ whenever }t \geq C_1n\log(n)/\log(\chi(1))
\end{equation}
for every non-linear $\chi \in \Irr(\SSS_n)$. With this, we will prove that when $n$ is sufficiently large we have 
\begin{equation}\label{eq:main2}
  \varphi^{k} \mbox{ contains }\Irr(\AAA_n) \mbox{ whenever }k \geq Cn\log(n)/\log(\varphi(1))
\end{equation}
for every nontrivial $\varphi \in \Irr(\AAA_n)$, with $C=5C_1^2$.

\smallskip
(i) Consider any $n \geq 5$ and any nontrivial $\varphi \in \Irr(\AAA_n)$. If $\varphi$ extends to $\SSS_n$, then we are done by 
\eqref{eq:s}. Hence we may assume that $\varphi$ lies under some $\chi^\l \in \Irr(\SSS_n)$, where $\l \vdash n$ is 
self-associate, and that $n$ is sufficiently large. By \cite[Proposition 4.3]{KST}, the latter implies that 
\begin{equation}\label{eq:a1}
  \varphi(1) \geq 2^{(n-5)/4}.
\end{equation}
Consider the Young diagram $Y(\l)$ of $\l$, and let $A$ denote the removable node in the last row of $Y(\l)$. 
Also let $\rho:=\chi^{\l \smallsetminus A} \in \Irr(\SSS_{n-1})$. Since $\l \smallsetminus A$ is not self-associate, 
$\rho$ is also irreducible over $\AAA_{n-1}$. Furthermore, by Frobenius' reciprocity,
$$1 \leq [(\chi^\l)|_{\SSS_{n-1}},\rho]_{\SSS_{n-1}} = [\chi^\l,\Ind^{\SSS_n}_{\SSS_{n-1}}(\rho)]_{\SSS_n},$$
whence $2\varphi(1) = \chi^\l(1) \leq \Ind^{\SSS_n}_{\SSS_{n-1}}(\rho)(1) = n\rho(1)$, and so
\begin{equation}\label{eq:a2}
  \rho(1) \geq (2/n)\varphi(1).
\end{equation}   
It follows from \eqref{eq:a1} and \eqref{eq:a2} that when $n$ is large enough, 
$$\log(\rho(1)) \geq \log(\varphi(1))-\log(n/2) \geq (9/10)\log(\varphi(1)).$$
Now we consider any integer 
\begin{equation}\label{eq:a3}
  s \geq \frac{10C_1}{9} \cdot \frac{n\log(n)}{\log(\varphi(1))}.
\end{equation}
This ensures that $s \geq C_1(n-1)\log(n-1)/\log(\rho(1))$, and so,
by \eqref{eq:s} applied to $\rho$, 
$\rho^s$ contains $\Irr(\SSS_{n-1})$.

\smallskip
(ii) Next, we can find a unique $m \in \Z_{\geq 3}$ such that 
\begin{equation}\label{eq:a4}
  n_0:=m(m+1)/2 \leq n-3 < (m+1)(m+2)/2,
\end{equation}
and consider the following partition
\begin{equation}\label{eq:a5}
  \mu:= (n-1-m(m-1)/2,m-1,m-2, \ldots,2,1)
\end{equation}  
of $n-1$. Note that $\mu$ has $m$ rows, with the first (longest) row 
$$\mu_1=n-1-m(m-1)/2 \geq m+2$$ 
by \eqref{eq:a4}. Hence, if $B$ is any addable 
node for the Young diagram $Y(\mu)$ of $\mu$, $Y(\mu \sqcup B)$ has at most $m+1$ rows and at least $m+2$ columns, and 
so is not self-associate. It follows that, for any such $B$, the character $\chi^{\mu \sqcup B}$ of $\SSS_n$ is irreducible over $\AAA_n$.

\smallskip
(iii) Recall that $\chi^\l|_{\AAA_n} = \varphi+\varphi^\star$ with $\varphi^\star$ being $\SSS_n$-conjugate to $\varphi$. 
It suffices to prove \eqref{eq:main2} for an $\SSS_n$-conjugate of $\varphi$. As $\chi^\l|_{\SSS_{n-1}}$ contains 
$\rho=\chi^{\l \smallsetminus A}$ which is irreducible over $\AAA_{n-1}$, without loss we may assume that $\varphi|_{\AAA_{n-1}}$ 
contains $\rho|_{\AAA_{n-1}}$. By the result of (i), $\rho^s$ contains $\chi^\mu$, with $\mu$ defined in \eqref{eq:a5} Thus 
\begin{equation}\label{eq:a6}
  1 \leq [\varphi^s|_{\AAA_{n-1}},(\chi^\mu)|_{\AAA_{n-1}}]_{\AAA_{n-1}}=\bigl[\varphi^s,
    \Ind^{\AAA_n}_{\AAA_{n-1}}\bigl((\chi^\mu)|_{\AAA_{n-1}}\bigr)\bigr]_{\AAA_n}.
\end{equation}    
Also recall that $\chi^\mu$ is an $\SSS_{n-1}$-character and $\SSS_n = \AAA_n\SSS_{n-1}$. Hence
$$\Ind^{\AAA_n}_{\AAA_{n-1}}\bigl((\chi^\mu)|_{\AAA_{n-1}}\bigr) = \bigl(\Ind^{\SSS_n}_{\SSS_{n-1}}(\chi^\mu)\bigr)|_{\AAA_{n}}.$$ 
Next,
$$\Ind^{\SSS_n}_{\SSS_{n-1}}(\chi^\mu) = \sum_{B~{\rm \tiny{addable}}}\chi^{\mu \sqcup B},$$
where, as shown in (ii), each such $\chi^{\mu \sqcup B}$ is irreducible over $\AAA_n$. Hence, it now follows from \eqref{eq:a6} that
there is an addable node $B_0$ for $Y(\mu)$ that $\varphi^s$ contains $\psi|_{\AAA_n}$, with $\psi:=\chi^{\mu \sqcup B_0}$.

\smallskip
(iv) By the choice of $B_0$, $\psi|_{\SSS_{n-1}}$ contains $\chi^\mu$, whence $\psi(1) \geq \chi^\mu(1)$. Next, by \eqref{eq:a4}, we 
can remove $n-1-n_0 \geq 2$ nodes from the first row to arrive at the staircase partition $(m,m-1, \ldots,1) \vdash n_0$. In particular,
$\psi|_{\SSS_{n_0}}$ contains the character $\chi_m$ of $\SSS_{n_0}$. By Lemma \ref{staircase}, for $n$ sufficiently large we have
\begin{equation}\label{eq:a7}
  \log(\psi(1)) \geq \log(\chi_m(1)) \geq (5/11)\log(n_0!) \geq (2/5)n\log(n),
\end{equation}  
since 
$$n_0 = m(m+1)/2 \geq n-(m+2) \geq n-(3/2+\sqrt{2n-4})$$
by the choice \eqref{eq:a4} of $m$. Now we consider the integer $t := \lceil (5/2)C_1 \rceil \leq 3C_1$ (since $C_1 \geq 2$). Then 
$$C_1n\log(n)/\log(\psi(1)) \leq (5/2)C_1 \leq t$$
by \eqref{eq:a7}, and so $\psi^t$ contains $\Irr(\SSS_n)$ by \eqref{eq:s} applied to $\psi$. In particular,
$(\psi^t)|_{\AAA_n}$ contains $\Irr(\AAA_n)$.

Recall from (iii) that $\varphi^s$ contains the irreducible character $\psi|_{\AAA_n}$. It follows that $\varphi^{st}$ 
contains $(\psi^t)|_{\AAA_n}$, and so $\varphi^{st}$ contains $\Irr(\AAA_n)$.

\smallskip
(v) Finally, consider any integer $k \geq Cn\log(n)/\varphi(1)$ with $C=5C_1^2$. Then 
$$k/t \geq k/3C_1 \geq (5/3)C_1n\log(n)/\log(\varphi(1)).$$
As $C_1\geq 1$ and $n\log(n)/\log(\varphi(1) \geq 2$, we have that 
$$(5/3-10/9)C_1n\log(n)/\log(\varphi(1)) \geq 10/9.$$
In particular, we can find an integer $s_0$ such that 
$$k/t \geq s_0 \geq (10/9)C_1n\log(n)/\log(\varphi(1)).$$
As $s$ satisfies \eqref{eq:a3}, the result of (iv) shows that $\varphi^{s_0t}$ contains $\Irr(\AAA_n)$.

Now, given any $\gamma \in \Irr(\AAA_n)$, we can find an irreducible constituent $\delta$ of $\varphi^{k-s_0t}\overline\gamma$. 
By the previous result, $\varphi^{s_0t}$ contains $\overline\delta$. It follows that $\varphi^k$ contains 
$\varphi^{k-s_0t}\overline\delta$, and 
$$[\varphi^{k-s_0t}\overline\delta,\gamma]_{\AAA_n}= [\varphi^{k-s_0t}\overline\gamma,\delta]_{\AAA_n} \geq 1,$$
i.e. $\varphi^k$ contains $\gamma$, and the proof of \eqref{eq:main2} is completed.
\end{proof}

\section{Products of characters}\label{pfth3}

\subsection{Products of characters in classical groups}

This is very similar to the proof of Theorem 2 of \cite{LST}. Let $G = G_r(q)$ be a simple group of Lie type of rank $r$ over 
$\F_q$.

\begin{lem}\label{stdiam}
There is an absolute constant $D$ such that for any $m\ge Dr^2$ and any $\c_1,\ldots,\c_m \in {\rm Irr}(G)$, we have
$[\prod_1^m\c_i,\St]_G\ne 0$. Indeed, $D=163$ suffices.
\end{lem}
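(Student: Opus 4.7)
The plan is to follow closely the method used in \cite[Theorem 2]{LST} and in Lemma~\ref{stein} above, the only novelty being that we must keep track of several distinct characters simultaneously rather than a single power. The starting point is the Steinberg identity
$$\left[\prod_{i=1}^m \c_i,\,\St\right]_G \;=\; \frac{1}{|G|}\sum_{g \in \GSS} \varepsilon_g \prod_{i=1}^m \c_i(g)\,|\CB_G(g)|_p,$$
where $\varepsilon_g = \pm 1$ and the sum ranges over semisimple elements of $G$, because $\St$ vanishes off $\GSS$. The contribution of $g = 1$ is the \emph{main term} $\frac{|G|_p}{|G|}\prod_{i=1}^m \c_i(1)$, and the task is to show that when $m \ge 163\, r^2$ the remaining semisimple contributions do not cancel it.

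The key input is the uniform character-ratio bound underlying \cite[Theorem 2]{LST}: there exist absolute constants $a, c > 0$ such that for \emph{every} nontrivial $\c \in \Irr(G)$ and every non-central semisimple $g \in G$,
$$\frac{|\c(g)|}{\c(1)} \;\le\; q^{-a\,\nu(g)/r},$$
where $\nu(g) = \supp(g)$. Taking products over $i$ yields
$$\prod_{i=1}^m \frac{|\c_i(g)|}{\c_i(1)} \;\le\; q^{-am\,\nu(g)/r},$$
an estimate whose strength grows linearly in $m$. This is the crucial place where having many characters rather than one helps: each factor contributes the same decay rate, so multiplying through amplifies it.

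To finish, one runs the error estimate from the proof of Lemma~\ref{stein} essentially unchanged: split $\sum_{1\ne g\in\GSS}$ according to $s = \nu(g)$, and substitute the uniform upper bounds on $n_s(G)$ and $|\CB_G(g)|_p$ available for all Lie types of rank $r$ (the type-independent analogues of Lemma~\ref{sest}, collected in \cite[\S 2]{LST}). When $m \ge Dr^2$ with $D$ sufficiently large, the exponent of $q$ in each summand is strictly negative in $s$, the resulting geometric series over $s \ge 1$ sums to less than $1$, and the main term dominates. Tracking the constants exactly as in \cite[Theorem 2]{LST} pins the threshold down to $D = 163$.

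The principal obstacle is already resolved in \cite{LST}: one needs a character-ratio bound of the above shape that applies uniformly to \emph{every} nontrivial $\c \in \Irr(G)$, not only to the specific characters $\a, \b, \g_i$ studied in Section~\ref{prel1}. Once this is taken for granted, the argument is essentially bookkeeping, which is why the authors describe the proof as ``very similar'' to that of \cite[Theorem 2]{LST}. The numeric constant $163$ is the outcome of optimizing the exponents in the final geometric sum and does not require new ideas beyond those already in play.
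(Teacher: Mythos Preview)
Your proposal is correct and matches the paper's own proof, which simply says the argument is ``proved exactly as for \cite[Lemma 2.3]{LST}, replacing the power $\c^m$ by the product $\prod_1^m\c_i$.'' You have spelled out precisely this substitution: the Steinberg identity \eqref{useag} with $\c^m$ replaced by $\prod_i \c_i$, the uniform character-ratio bound from \cite{LST} applied factor by factor, and the same stratification by $s=\nu(g)$ that drives the error estimate---so there is nothing to add.
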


\begin{proof}
This is proved exactly as for \cite[Lemma 2.3]{LST}, replacing the power $\c^m$ by the product $\prod_1^m\c_i$.
\end{proof}

\begin{proof}[Proof of Theorem \ref{rodsax}(i)]
Take $c_1=3D$ with $D$ as in the lemma, and let $\c_1,\ldots ,\c_l \in {\rm Irr}(G)$ with $l=c_1r^2$. Writing $m=l/3 = Dr^2$, Lemma \ref{stdiam} shows that each of the products $\prod_1^m\c_i$, $\prod_{m+1}^{2m}\c_i$ and $\prod_{2m+1}^{3m}\c_i$ contains $\St$. Hence $\prod_1^l\c_i$ contains $\St^3$, and this contains ${\rm Irr}(G)$ by \cite[Prop. 2.1]{LST}. This completes the proof.
\end{proof}

\subsection{Products of characters in linear and unitary groups}

This is similar to the proof of Theorem 3 of \cite{LST}. Let $G = \PSL_n^\e(q)$.

We shall need \cite[Theorem 3.1]{LST}, which states that there is a function $f:\N\to \N$ such that for any $g \in \GSS$ with $s = \nu(g)$, and any $\c \in {\rm Irr}(G)$, we have
\begin{equation}\label{31lst}
|\c(g)| < f(n)\c(1)^{1-\frac{s}{n}}.
\end{equation}

Again we begin with a lemma involving the Steinberg character.

\begin{lem}\label{ste} Let $m\in \N$ and let $\c_1,\ldots,\c_m \in {\rm Irr}(G)$. Set $c=44.1$, and define
\[
\begin{array}{l}
\D_{1m} = cf(n)^m \sum_{1\le s <n/2} q^{ns+\frac{3n}{2}-1}\left(\prod_1^m\c_i(1)\right)^{-s/n},\\
\D_{2m} = f(n)^m \sum_{n/2\le s<n}q^{n^2-\frac{1}{2}n(s-1)-1}\left(\prod_1^m\c_i(1)\right)^{-s/n}.
\end{array}
\]
If $\D_{1m}+\D_{2m}<1$, then $[\prod_1^m\c_i,\,\St]_G \ne 0$.
\end{lem}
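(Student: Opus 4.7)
The plan is to follow the template of the proof of Lemma \ref{stein} above (itself based on \cite[Lemma 2.3]{LST}), but replacing the power $\c^l$ by the product $\prod_{i=1}^m \c_i$ and using the linear/unitary character-ratio bound \eqref{31lst} in place of Propositions \ref{rat-so21}--\ref{rat-sp-so22}. Since $\St$ vanishes off semisimple elements and $\St(g)=\e_g |\CB_G(g)|_p$ with $\e_g\in\{\pm 1\}$ for $g\in \GSS$, separating the contribution of $g=1$ yields
\[
\Bigl[\prod_{i=1}^m \c_i,\,\St\Bigr]_G = \frac{\prod_{i=1}^m \c_i(1)}{|G|}\Bigl(|G|_p+\sum_{1\neq g\in\GSS}\e_g\,\frac{\prod_{i=1}^m\c_i(g)}{\prod_{i=1}^m\c_i(1)}\,|\CB_G(g)|_p\Bigr),
\]
so it suffices to show that $\Sigma_m<|G|_p$, where
\[
\Sigma_m := \sum_{1\neq g\in\GSS}\Bigl|\frac{\prod_{i=1}^m\c_i(g)}{\prod_{i=1}^m\c_i(1)}\Bigr|\,|\CB_G(g)|_p.
\]

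Next, the key observation is that the character-ratio bound \eqref{31lst} is multiplicative over the factors $\c_i$: for $g\in \GSS$ with $\nu(g)=s$, applying \eqref{31lst} to each $\c_i$ individually and multiplying gives
\[
\Bigl|\frac{\prod_{i=1}^m\c_i(g)}{\prod_{i=1}^m\c_i(1)}\Bigr| < f(n)^m\Bigl(\prod_{i=1}^m\c_i(1)\Bigr)^{-s/n}.
\]
This is precisely what produces both the $f(n)^m$ factor and the $(\prod_i\c_i(1))^{-s/n}$ factor appearing in $\D_{1m}$ and $\D_{2m}$. Partitioning $\Sigma_m$ by the value of $s=\nu(g)$, one is reduced to bounding $\sum_{g\in N_s(G)}|\CB_G(g)|_p$ for each $s$.

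Here I would invoke \cite[Lemma 3.2]{LST}, the $\PSL_n^\e(q)$ analogue of Lemma \ref{sest} in Section \ref{pfth1}, which separately handles the two regimes $1\le s<n/2$ (where the eigenspace of dimension $>n/2$ forces $\CB_G(g)$ into a Levi of type $\GL_{n-s}\times\GL_s$) and $n/2\le s<n$ (where one instead bounds the total contribution by $|G|$ times a maximal centralizer $p$-part estimate). The constant $c=44.1$ in $\D_{1m}$ is absorbed from the Fulman--Guralnick bound \cite{FG} on the number of conjugacy classes of $G$. The one delicate point in the bookkeeping, and essentially the only real obstacle, is to verify that the exponents obtained by multiplying $n_s(G)$ by $\max_{g\in N_s(G)}|\CB_G(g)|_p$ and dividing by $|G|_p$ match exactly the stated exponents $ns+\tfrac{3n}{2}-1$ and $n^2-\tfrac{1}{2}n(s-1)-1$ in $\D_{1m}$ and $\D_{2m}$. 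Once this match is checked, one obtains $\Sigma_m\le(\D_{1m}+\D_{2m})|G|_p$, and the hypothesis $\D_{1m}+\D_{2m}<1$ yields $\Sigma_m<|G|_p$, completing the argument.
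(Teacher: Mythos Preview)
Your proposal is correct and follows essentially the same route as the paper. The paper simply packages the counting and centralizer estimates by saying ``arguing as in the proof of \cite[Lemma 3.3]{LST}'' and records an intermediate expression $\D_m$ in which each factor $|\c_i(g)/\c_i(1)|$ is bounded by its maximum over $g$ with $\nu(g)=s$ (via elements $g_{i,s}$), then applies \eqref{31lst}; you instead spell out the same steps directly using \cite[Lemma 3.2]{LST} and apply \eqref{31lst} to each $\c_i$ at the actual $g$, which is the same bound.
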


\begin{proof}
Arguing as in the proof of \cite[Lemma 3.3]{LST}, we see that $[\prod_1^m\c_i,\,\St]_G \ne 0$ provided $\D_m <1$, where
\[
\D_m := \sum_{1 \leq s < n/2} cq^{ns+\frac{3n}{2}-1}\left|\prod_1^m\frac{\chi_i(g_{i,s})}{\chi(1)}\right| +
\sum_{n/2 \leq s < n} q^{n^2-\frac{1}{2}n(s-1)-1}\left|\prod_1^m\frac{\chi(g_{i,s})}{\chi(1)}\right|,
\]
where $g_{i,s} \in \GSS$ is chosen such that $\nu(g_{i,s})=s$ and $|\c_i(g_{i,s})|$ is maximal. Now application of (\ref{31lst}) gives the conclusion.
\end{proof}

\begin{lem}\label{better} There is a function $g:\N\to \N$ such that the following holds. Suppose that $\c_1,\ldots,\c_m \in {\rm Irr}(G)$ satisfy $\prod_1^m \c_i(1) > |G|^3$. Then provided $q>g(n)$, we have $[\prod_1^m\c_i,\,\St]_G \ne 0$.
\end{lem}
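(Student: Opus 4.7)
The plan is to apply Lemma \ref{ste} and verify $\Delta_{1m} + \Delta_{2m} < 1$ under the hypothesis $P := \prod_1^m \chi_i(1) > |G|^3$, for $q$ sufficiently large in terms of $n$. The principal difficulty is that $m$ is not a priori bounded in terms of $n$, so the factor $f(n)^m$ in the sums defining $\Delta_{1m}$ and $\Delta_{2m}$ must be absorbed carefully.

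The first step is a reduction to the case where every $\chi_i$ is nontrivial: trivial characters contribute $1$ to both $P$ and to the character $\prod \chi_i$, and can be deleted without affecting the hypothesis or the conclusion. After this reduction, the classical lower bound on the minimal nontrivial irreducible character degree of $G = \PSL_n^\e(q)$ gives $\chi_i(1) \geq c_0 q^{n-1}$ for an absolute constant $c_0>0$, hence the auxiliary bound
\[ P \;\geq\; (c_0 q^{n-1})^m. \]
This complements the hypothesis $P > |G|^3$; the crux of the argument is to combine them, using whichever is stronger: $|G|^3$ dominates when $m$ is of order $n^2$ or less, and $(c_0 q^{n-1})^m$ dominates when $m$ is larger.

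I would split into two regimes at a threshold $m_0 \asymp n^2$, taking $g(n) := f(n)^{6n}$ (say). In the regime $m \leq m_0$, use $P^{s/n} > q^{3s(n - 1/n) - O(1)}$ from the hypothesis; the dominant $s=1$ summand of $\Delta_{1m}$ is then at most $c f(n)^{m_0} q^{-n/2 + O(1/n)}$, which is much less than $1/(4n)$ once $q > g(n)$, and the higher $s$-terms in $\Delta_{1m}$ together with all of $\Delta_{2m}$ are geometrically smaller. In the regime $m > m_0$, use $P^{s/n} \geq (c_0 q^{n-1})^{ms/n}$, which yields
\[ \frac{f(n)^m}{P^{s/n}} \;\leq\; \Bigl( \frac{f(n)}{c_0^{s/n} q^{(n-1)s/n}} \Bigr)^m \;\leq\; f(n)^{-cm} \]
for an absolute $c>0$ once $q \geq f(n)^{6n}$; combined with the factor $q^{ns + 3n/2 - 1}$ and using $m > m_0$, both $\Delta_{1m}$ and $\Delta_{2m}$ are controlled.

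The main obstacle is the bookkeeping required to calibrate $m_0$ and $g(n)$ jointly so that the estimates in the two regimes work with the \emph{same} $g(n)$ independent of $m$, and to track the higher-$s$ terms and the $\Delta_{2m}$ contributions. The conceptual heart of the argument is that large $m$ automatically provides an exponential saving of $(q^{n-1})^m$ in $P^{s/n}$, exactly what is needed to dominate $f(n)^m$; this saving is invisible to the raw inequality $P > |G|^3$ and must be extracted from the product structure of $P$ via the minimal degree bound.
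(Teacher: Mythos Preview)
Your plan is the paper's own: invoke Lemma~\ref{ste} and bound $\Delta_{1m}+\Delta_{2m}$ using $P>|G|^3$. The paper simply substitutes $P^{-s/n}<8q^{-3ns+6s/n}$ (from $|G|>\tfrac12 q^{n^2-2}$) to obtain $\Delta_{1m}\le 8cf(n)^m\sum_{1\le s<n/2}q^{-2ns+3n/2+2}$ and a similar bound for $\Delta_{2m}$, and then asserts the conclusion. You have correctly identified a point the paper's write-up glosses over: with $m$ unbounded in terms of $n$, the surviving factor $f(n)^m$ cannot be absorbed by any choice of $q$ depending only on $n$. Your reduction to nontrivial $\chi_i$ together with the Landazuri--Seitz lower bound $\chi_i(1)\ge c_0q^{n-1}$ is exactly the missing ingredient (note also that the exponent $3$ in $P>|G|^3$ is essentially tight for the $s=1$ term, so one cannot simply trade a small power of $P$ to swallow $f(n)^m$), and the two-regime split on $m$ is a clean way to organise it.

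One correction is needed in your large-$m$ regime. The inequality $f(n)^m/P^{s/n}\le f(n)^{-cm}$ is valid but, being independent of $q$, cannot control the prefactor $q^{ns+3n/2-1}$ in $\Delta_{1m}$ (or $q^{n^2-n(s-1)/2-1}$ in $\Delta_{2m}$) uniformly over all $q>g(n)$. You must retain the $q$-dependence from your own intermediate step: for $q>g(n)$ one can arrange $f(n)c_0^{-s/n}<q^{s/(2n)}$, whence
\[
\frac{f(n)}{(c_0q^{n-1})^{s/n}}<q^{-s(2n-3)/(2n)},\qquad\text{and so}\qquad f(n)^mP^{-s/n}<q^{-ms(2n-3)/(2n)}.
\]
The $q$-exponent of the $s$-term in $\Delta_{1m}$ is then at most $ns+\tfrac{3n}{2}-1-\tfrac{ms(2n-3)}{2n}$, which is negative for every $s\ge1$ once $m>m_0$; in fact $m_0$ of order $n$ already suffices, so your choices $m_0\asymp n^2$ and $g(n)=f(n)^{6n}$ are comfortably more than enough. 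With this adjustment both regimes, and $\Delta_{2m}$ likewise, go through.
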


\begin{proof}
We have $|G|>\frac{1}{2}q^{n^2-2}$, so for $s<n$,
\[
\left(\prod_1^m\c_i(1)\right)^{-s/n} < 8q^{-3ns+\frac{6s}{n}}.
\]
Hence 
\[
\D_{1m} \le 8cf(n)^m \sum_{1\le s <n/2} q^{-2ns+\frac{3n}{2}+2},
\]
and
\[
\begin{array}{ll}
\D_{2m} & \le 8f(n)^m \sum_{n/2\le s<n}q^{n^2-\frac{1}{2}n(s-1)-1} q^{-3ns+6} \\
              & \le 8f(n)^m \sum_{n/2\le s<n}q^{-\frac{3n^2}{4}+\frac{1}{2}n+5}.
\end{array}
\]
Now the conclusion follows from Lemma \ref{ste} (using some slight refinements of the above inequalities for $n\le 4$). 
\end{proof} 

\begin{proof}[Proof of Theorem \ref{rodsax}(ii)] 
Assume  $\c_1,\ldots,\c_l \in {\rm Irr}(G)$ satisfy $\prod_1^l \c_i(1) > |G|^{10}$. Since $\c_i(1) < |G|^{1/2}$ for all $i$, there are disjoint subsets $I_1,I_2,I_3$ of $\{1,\ldots ,m\}$ such that $\prod_{i\in I_k} \c_i(1) > |G|^3$ for $k=1,2,3$. Then  $\prod_{i\in I_k} \c_i$ contains $\St$ for each $k$, by Lemma \ref{better}, and so $\prod_1^l\c_i$ contains $\St^3$, hence contains ${\rm Irr}(G)$, completing the proof.
\end{proof}

\subsection{Products of characters in symmetric and alternating groups}
\begin{prop}\label{rs2-an}
Let $G \in \{\SSS_n,\AAA_n\}$, $l \in \Z_{\geq 1}$, and let $\chi_1,\chi_2, \ldots,\chi_l \in \Irr(G)$ with $\chi_i(1) > 1$ for all $i$.
\begin{enumerate}[\rm(i)]
\item If $l \geq 8n-11$, then $\bigl(\prod^l_{i=1}\chi_i\bigr)^{2}$ contains $\Irr(G)$.
\item Suppose that, for each $1 \leq i \leq l$, there exists some $j \neq i$ such that $\chi_j = \chi_i$. If $l \geq 24n-33$ then
$\prod^l_{i=1}\chi_i$ contains $\Irr(G)$.
\end{enumerate}
\end{prop}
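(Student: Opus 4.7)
The plan is to combine a ``pivot'' lemma for $\SSS_n$-characters with Sellke's theorem \cite[Theorem 1.4]{S} and Theorem \ref{main2}. The pivot lemma asserts that for every non-linear $\chi = \chi^\lambda \in \Irr(\SSS_n)$, the square $\chi \cdot \chi$ contains at least one of the three low-rank irreducibles
\[
\rho_1 = \chi^{(n-1,1)}, \qquad \rho_2 = \chi^{(n-2,2)}, \qquad \rho_3 = \chi^{(n-2,1,1)}.
\]
I would prove this via branching: if $\lambda$ has $r_\lambda \geq 2$ removable nodes, then Frobenius reciprocity and the restriction formula $\chi|_{\SSS_{n-1}} = \sum_A \chi^{\lambda \smallsetminus A}$ give $[\chi^2, \rho_1] = r_\lambda - 1 \geq 1$; if $\lambda$ is rectangular (a single removable node), then $[\chi|_{\SSS_{n-2}}, \chi|_{\SSS_{n-2}}] = 2$, and combining with the decomposition $\Ind_{\SSS_{n-2}}^{\SSS_n}(1) = 1 + 2\rho_1 + \rho_2 + \rho_3$ and the vanishing $[\chi^2, \rho_1] = 0$ forces exactly one of $\rho_2, \rho_3$ to appear in $\chi^2$ with multiplicity one.

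For part (i) with $G = \SSS_n$, characters are real, so $(\prod_{i=1}^l \chi_i)^2 = \prod \chi_i^2$. By the pivot lemma and pigeonhole over $\{\rho_1, \rho_2, \rho_3\}$, some pivot $\rho$ occurs in $\chi_i^2$ for an index set $I$ of size $\geq l/3 \geq (8n-11)/3$. Using that $\prod_{i \notin I} \chi_i^2 \supseteq 1_G$, we obtain $\prod \chi_i^2 \supseteq \rho^{|I|}$, and Sellke's theorem applied to $\rho$ with $\rho(1) \geq n-1$ then yields $\rho^{|I|} \supseteq \Irr(\SSS_n)$, since $(8n-11)/3$ exceeds the Sellke threshold $C_1 n \log(n)/\log(n-1)$.

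For part (ii), write $\prod \chi_i = \prod_\psi \psi^{m_\psi}$ with $m_\psi \geq 2$. Extracting pairs gives $\psi^{m_\psi} \supseteq \rho_{j(\psi)}^{\lfloor m_\psi/2 \rfloor}$ via the pivot lemma applied to $\psi^2$, and pigeonhole over the three pivots yields $\prod \chi_i \supseteq \rho^K$ with
\[
K \;\geq\; \tfrac{1}{3}\sum_\psi \lfloor m_\psi/2 \rfloor \;\geq\; \tfrac{l - d}{6},
\]
where $d$ is the number of distinct characters in the multiset. The hypothesis $m_\psi \geq 2$ forces $d \leq l/2$, hence $K \geq l/12 \geq 2n - 3$, which is just enough for Sellke. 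For the $\AAA_n$ case in both parts, I would follow the reduction used in the proof of Theorem \ref{main2}: any non-linear $\varphi \in \Irr(\AAA_n)$ either extends to $\SSS_n$ (reducing directly to the symmetric-group argument above via induction/restriction) or lies under a self-associate $\chi^\lambda$ and hence satisfies $\varphi(1) \geq 2^{(n-5)/4}$ by \cite[Proposition 4.3]{KST}, making the Sellke/Theorem \ref{main2} threshold comfortably met in the latter case.

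The main obstacle will be part (ii) at small $n$ with precise constants: the inequality $K \geq C_1 n \log n / \log(n - 1)$ is tight near $K \approx 2n$, so the explicit value of Sellke's constant $C_1$ matters and a handful of small cases may need direct verification, perhaps via \cite[Proposition 2.1]{LST} and the containment $\rho_1^{O(n)} \supseteq \St$-analog. A secondary issue is the $\AAA_n$ reduction for complex-valued characters, where $(\prod \chi_i)^2 \neq \prod \chi_i \bar\chi_i$ and the pivot argument requires additional bookkeeping when working with pairs of complex-conjugate constituents of restricted self-associate characters.
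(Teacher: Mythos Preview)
Your overall architecture---a pivot lemma asserting that each $\chi_i^2$ contains one of a short list of low-degree irreducibles, then pigeonhole, then a covering result for powers of the pivot---is exactly the paper's. But the specific ingredients differ, and the substitution you make creates a real gap.

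The paper does \emph{not} invoke Sellke at all. Instead it quotes two facts established in the proof of \cite[Theorem~5]{LST}: (a) for every non-linear $\chi_i\in\Irr(G)$, with $G\in\{\SSS_n,\AAA_n\}$, the square $\chi_i^2$ contains $(\alpha_i)|_G$ for some $\alpha_i\in\{\chi^{(n-1,1)},\chi^{(n-2,2)},\chi^{(n-2,1^2)},\chi^{(n-3,3)}\}$; and (b) for each such $\beta$, already $\beta^{2n-2}\supseteq\Irr(\SSS_n)$. With four pigeonholes and $l\ge 8n-11$ one gets $\lceil(8n-11)/4\rceil=2n-2$ copies of some pivot, matching (b) exactly; this is why the constant $8n-11$ appears. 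Your route via Sellke requires $(8n-11)/3\ge C_1 n\log n/\log(n-1)$, i.e.\ essentially $C_1\le 8/3$, and Sellke's theorem does not deliver a constant that small---so as written the argument does not prove the stated bound. (Your three-pivot lemma for $\SSS_n$ is correct and even sharper than the paper's four-pivot version; the problem is the covering step, not the pivot step.)

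A second gap is the $\AAA_n$ case. The paper's pivot statement (a) is already formulated for $G=\AAA_n$, so no separate reduction is needed. Your sketch for split characters $\varphi$ under a self-associate $\chi^\lambda$ does not establish that $\varphi^2$ itself contains a pivot restricted to $\AAA_n$; invoking the degree bound $\varphi(1)\ge 2^{(n-5)/4}$ and ``the Sellke/Theorem~\ref{main2} threshold'' is beside the point, since we are bounding a product of distinct characters, not a power of a single one. For part~(ii) the paper's argument is also more direct than yours: from the multiplicity hypothesis one extracts $\sum_\psi\lfloor m_\psi/2\rfloor\ge l/3\ge 8n-11$ squares and applies part~(i) verbatim, with no further pigeonhole or estimate on the number $d$ of distinct characters.
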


\begin{proof}
(i) Let $\chi^\l$ denote the irreducible character of $\SSS_n$ labeled by the partition $\l \vdash n$. A key result established in the proof of 
\cite[Theorem 5]{LST} is that, for any $i$ there exists 
$$\a_i \in \left\{\chi^{(n-1,1)},\chi^{(n-2,2)},\chi^{(n-2,1^2)},\chi^{(n-3,3)}\right\}$$
such that $\chi_i^2$ contains $(\a_i)|_G$. Since $l \geq 8n-11$, there must be some 
$$\b \in \left\{\chi^{(n-1,1)},\chi^{(n-2,2)},\chi^{(n-2,1^2)},\chi^{(n-3,3)}\right\}$$
such that $\b=\a_i$ for at least $2n-2$ distinct values of $i$. It follows that $\bigl(\prod^l_{i=1}\chi_i\bigr)^{2}=\g\d$, 
where $\g := \b^{2n-2}|_G$, and $\d$ is a character of $G$. By \cite[Theorem 5]{LST}, $\b^{2n-2}$ contains 
$\Irr(\SSS_n)$, whence $\g$ contains $\Irr(G)$. Now the arguments in the last paragraph of the proof of 
Theorem \ref{main2} show that $\g\d$ contains $\Irr(G)$ as well.

\smallskip
(ii) Note that the assumptions imply, after a suitable relabeling, that $\prod^l_{i=1}\chi_i$ contains $\sigma\l$, 
where $\l$ is a character of $G$ and 
$$\s= \prod^{8n-11}_{i=1}\chi_i^2.$$
(Indeed, any subproduct $\chi_{i_1}\ldots\chi_{i_t}$ with $t> 1$ and $\chi_{i_1}=\ldots =\chi_{i_t}$ yields a term 
$(\chi_{i_1}^2)^{\lfloor t/2 \rfloor}$ in $\s$.) By (i), $\s$ contains $\Irr(G)$, and so we are done as above.
\end{proof}

\end{document}